\newtheorem{thm}{Theorem}[section]
\newtheorem{lem}[thm]{Lemma}
\newtheorem{prop}[thm]{Proposition}
\newtheorem{cor}[thm]{Corollary}
\newtheorem{df}[thm]{Definition}
\newcommand{\E}{\mathbb{E}}
\newcommand{\R}{\mathbb{R}}
\newcommand{\N}{\mathbb{N}}
\def\<{\langle}
\def\>{\rangle}
\newcommand{\Pro}{\ensuremath{\mathbb{P}}}
\newcommand{\Var}{\mathop{\mathrm{Var}}\nolimits}
\newcommand{\Cov}{\mathop{\mathrm{Cov}}\nolimits}
\newcommand{\CC}{C}
\theoremstyle{usual}
\newtheorem{theorem}{Theorem}[section]
\newtheorem{proposition}{Proposition}
\newtheorem{clam}{Claim}
\newtheoremstyle{likedef}
  {}%
  {}%
  {}%
  {\parindent}%
  {\bfseries}%
  {.}%
  {.5em}%
  {}%
\theoremstyle{likedef}
\newtheorem{remark}{Remark}
\newtheorem{example}{Example}
\numberwithin{equation}{section}
\title{Rate of convergence of the mean for sub-additive ergodic sequences}
\author{Antonio Auffinger \thanks{The research of A. A. is supported by NSF grant DMS-1407554.}  \\ \small{University of Chicago} \and Michael Damron \thanks{The research of M. D. is supported by NSF grant DMS-1419230.} \\ \small{Indiana University} \and Jack Hanson\\ \small{Indiana University}}
\date{\today}
\begin{document}
\maketitle
\begin{abstract}
For sub-additive ergodic processes $\{X_{m,n}\}$ with weak dependence, we analyze the rate of convergence of $\mathbb{E}X_{0,n}/n$ to its limit $g$. We define an exponent $\gamma$ given roughly by $\mathbb{E}X_{0,n} \sim ng + n^\gamma$, and, assuming existence of a fluctuation exponent $\chi$ that gives $\Var X_{0,n} \sim n^{2\chi}$, we provide a lower bound for $\gamma$ of the form $\gamma \geq \chi$. The main requirement is that $\chi \neq 1/2$. In the case $\chi=1/2$ and under the assumption $\Var X_{0,n} = O(n/(\log n)^\beta)$ for some $\beta>0$, we prove $\gamma \geq \chi - c(\beta)$ for a $\beta$-dependent constant $c(\beta)$. These results show in particular that non-diffusive fluctuations are associated to non-trivial $\gamma$. Various models, including first-passage percolation, directed polymers, the minimum of a branching random walk and bin packing, fall into our general framework, and the results apply assuming $\chi$ exists. In the case of first-passage percolation in $\mathbb Z^d$, we provide a version of $\gamma \geq -1/2$ without assuming existence of $\chi$.
\end{abstract}

\section{Introduction}

\subsection{Subadditive ergodic theorem}
Sub-additive ergodic theory plays a major role in modern mathematics. Its development began in probability with the work of Hammersley-Welsh \cite{HW} and with the seminal paper of Kingman \cite{Kingman}. Substantial discussion with many examples and applications in diverse areas can be found in \cite{Cafarelli, Hamm, KingmanNotes, Liggett, Logan, Smy, Walters} and the references therein. 

At the heart of the theory sits Kingman's sub-additive ergodic theorem \cite[Theorem~5]{Kingman}. It establishes the existence of an almost sure and $L_1$ limit of $X_{0,n}/n$  for a sub-additive ergodic process $\{X_{m,n}\}$. In probabilistic language, this theorem is a type of law of large numbers. However, it differs significantly from the independent, identically distributed case. In particular, even in the presence of weak dependence, the rate of convergence can be difficult to control. 

This rate is the main subject of this paper. We will see that the corresponding error term can be broken into two pieces, associated respectively to random and deterministic fluctuations of the sequence. Our goal will be to relate these two errors, and the main theorem that we present will establish such a relation valid in a wide range of applications.

\subsection{The rate of convergence}\label{sec: setting}

We now discuss the background and history of the rate of convergence, beginning with the formal setup. Suppose that $\{X_{m,n}\}$ is a sequence of random variables indexed by nonnegative integers $m < n$. Assume the following conditions:
\begin{enumerate}
\item $X_{0,n} \leq X_{0,m} + X_{m,n}$ for all $0 \leq m < n$;
\item $\{X_{m+1,n+1}:\, 0 \leq m < n\} \stackrel{d}{=} \{X_{m,n}:\, 0 \leq m < n\} $ for each $n$, and this shift operation is ergodic;
\item $\mathbb{E} X_{0,n} > -c n$ for some $c > 0$ and all $n$.
\end{enumerate}
These assumptions are adequate for Kingman's sub-additive ergodic theorem \cite{Liggett}, which gives
\[\lim_{n \rightarrow \infty} \frac{X_{0,n}}{n} = \lim_{n \rightarrow \infty} \frac{\mathbb{E} X_{0,n}}{n} = \inf_{n \geq 1} \frac{\mathbb{E}X_{0,n}}{n} =: g,\]
where the convergence on the left-hand side is a.s. and in $L^1$. Although Kingman's theorem gives its existence, the value of the limit is process-dependent, and precise characterization is often a challenging question. 

The rate of convergence of $X_{0,n}/n$ to $g$ has two distinct sources: the deviation of $X_{0,n}$ from its mean $\E X_{0,n}$ and the deviation of $\E X_{0,n}$ from $ng$. Precisely, if we write $X_{0,n} = ng + o(n)$ then 
\begin{equation}\label{eq:brackets}
o(n) = \underbrace{X_{0,n} - \mathbb{E}X_{0,n}}_{random~fluctuations} + \underbrace{\mathbb{E}X_{0,n} - ng}_{non-random~fluctuations} \, .
\end{equation}
One can typically control the random fluctuation term by using techniques from concentration of measure \cite{BLM, Ledoux}. For example, in many probabilistic models that we will discuss, exponential or Gaussian concentration bounds are available. This is particularly true when there is an underlying product space and the $X_{m,n}$'s  do not depend too heavily on any of the coordinates \cite{Talagrand}.

\subsection{Main goals}

A priori, there is no clear relation between two parts of \eqref{eq:brackets}. Indeed, in many examples we can modify the long term behavior of the non-random fluctuation term by adding a deterministic sub-additive process and therefore not changing the random part. However, the only current methods to bound the non-random fluctuation term use estimates for the random fluctuation part and  rely on intrinsic properties of the process.

To put our theorems in context, we briefly review past results. Upper bounds for the non-random fluctuations have been established in many examples including first-passage percolation \cite{Alexander, Alexander2, AlexanderDirected, Chatterjee2013, Kesten}, last-passage percolation \cite{Jinho, Joha} and longest common subsequence \cite{Alexander}. In 1997, a general theory was given by  K. Alexander to bound the non-random fluctuation term by the random fluctuation term \cite{Alexander}. His ingenious convex hull approximation property applies to several processes that are naturally indexed by the $d$-dimensional integer lattice.

Lower bounds for non-random fluctuations are significantly  less developed. To our knowledge, the only results in this direction are given by Kesten \cite{Kesten} in the example of first-passage percolation and \cite{Jaillard, KL} and \cite[Section~8.2]{Yukich} for classes of ``Euclidean functionals'' of point processes. In particular, there is no general framework that deals with lower bounds. Our objective is to provide such a framework. 

 Below we list the main points of the article. Our overall assumption is that the process $\{X_{m,n}\}$ has a  weak-dependence structure. In points 1 and 2 we assume the existence of a random fluctuation exponent. 
\begin{enumerate}
\item Non-diffusive fluctuations give rise to non trivial non-random fluctuations. In our main theorem, Theorem \ref{thm:main1}, we show that if $\Var X_{0,n} = O(n/ (\log n)^{\beta} )$ then $\E X_{0,n} -ng$ is at least of order of $n^c$ for some constant $c$. A similar result holds if the variance grows as a super-linear power of $n$.
\item In ideal non-diffusive conditions both fluctuation terms have the same order. In the same theorem, Theorem \ref{thm:main1}, we show that if $\Var X_{0,n}$ behaves as $n^{2 \chi}$ for $\chi \neq 1/2$ then  $\E X_{0,n} -ng$ grows at least as $n^{\chi}$. Comparing to the work of Alexander mentioned above, one therefore obtains that in many applications of interest both terms are comparable. This result is particularly suited to models of KPZ type \cite{KPZ}, where $\chi$ is expected to be $1/3$. 
 
\item Application to $d$-dimensional first-passage percolation. We establish a lower bound of the form $\E X_{0,n} -ng \geq n^{-1/2-\epsilon}$ for infinitely many $n$ and any $\epsilon>0$ without assuming the existence of any fluctuation exponent. The best bound before this work is due to Kesten \cite{Kesten} who showed a lower bound of order $n^{-1}$ for all $n$. This bound becomes more relevant in high dimensions, where many believe the random fluctuation exponent approaches zero. 

\item The assumption of weak-dependence is easily checked in many models of interest. We provide  examples in Section \ref{sec:examples}. 

\end{enumerate}

The proofs of these results come from a new iterative scheme that we develop in Section~\ref{sec:proof} to bound the ratios $\Var X_{0,n}/ \Var X_{0,m}$ for $n$ and $m$ large. We will sketch this procedure in Section \ref{sec:sketch}.

\section{Main results}

\subsection{Weak-dependence and exponents}

The main results on convergence that follow require some asymptotic independence for the collection $\{X_{m,n}\}$. One possibility is to assume a strong mixing rate for this collection. In practice, though, these random variables often obey a positive association which allows us to assume merely suitable decay of correlations. So we add the following to our list of assumptions.
\begin{itemize}
\item[4.] The collection $\{X_{m,n}\}$ obeys either {\bf Condition I} (mixing) or {\bf Condition II} (association with small covariances) detailed below.
\end{itemize}
This assumption is valid, for instance, if for each $n \geq 1$, the variables $X_{0,n}, X_{n,2n}, \ldots$ are independent with a non-degenerate distribution and two moments. (Association condition II holds in this case.) Let us now detail these conditions.


For each $n \geq 1,$ define the function $\alpha_n: \mathbb{Z}_+ \rightarrow \mathbb{R}$ by
\begin{equation}
\label{eq:alphadef}
\alpha_n(m) = \sup_{k \geq 0} \sup_{A,B} \left| \mathbb{P}(A \cap B) - \mathbb{P}(A) \mathbb{P}(B) \right|,
\end{equation}
where the second supremum is over all events $A \in \sigma(X_{in, (i+1)n}:\, 0 \leq i < k)$ and $B \in \sigma(X_{in,(i+1)n}:\, k+m \leq i )$.
This is a version of a strong mixing coefficient.

\medskip
\noindent
{\bf Condition I}: (Mixing condition.) There exist positive constants $\CC_4,\CC_5$ such that, for each $n$,
\begin{enumerate}
\item $\Var \sum_{i \in \mathcal{B}} X_{(i-1)n,in} \leq \CC_4 b ~\Var X_{0,n}$ for all $b$ and sets $\mathcal{B}$ of indices such that $\#\mathcal{B}=b$;

\item $\CC_5 b ~\Var X_{0,n}\leq \Var \sum_{i=1}^b X_{(i-1)n,in}$ for all $b$;

\item for each $\kappa>0$, there exists $\CC_6$ such that, for all $n$ and $x$, $\alpha_n(x) \leq \CC_6 x^{-\kappa}$.
\end{enumerate}

Item 3 above is a strong example of a sufficient mixing condition and can be considerably relaxed. Item 1 is a covariance assertion, and can be proved from 3 in the presence of finite $2+\delta$ moments if the mixing rate $\alpha_n$ is assumed to increase suitably with $n$. 

\medskip
\noindent
{\bf Condition II}: (Association condition.) The variables $\{X_{m,n}\}$ are positively associated: for any real-valued coordinatewise nondecreasing functions $\Phi,\,\Psi$ on realizations of $\{X_{m,n}\}$ \[\mathrm{Cov}(\Phi ,  \Psi) \geq 0 \quad \text{ if it exists}. \]
In addition, we need to assume an asymptotic negligibility of the covariances. The condition comes from \cite{CG}: there is a function $u : \{0,1,2, \ldots\} \to \mathbb{R}$ such that $u(r) \to 0$ as $r \to \infty$ and
\begin{equation}
\label{eq:covcond}
\frac{\max_{i \geq 0} \sum_{j : |i-j| \geq r} \mathrm{Cov} (X_{in,(i+1)n}, X_{jn,(j+1)n})}{\Var X_{0,n}} \leq u(r) \text{ for all } n\ .
\end{equation}
The covariance assumption \eqref{eq:covcond} can often be verified if one has bounds on the mixing rate $\alpha_n$. In this case one can use Lemma~\ref{lem:orthogcontrol} to decouple $X_{in,(i+1)n}$ from $X_{jn,(j+1)n}$. 



\bigskip \noindent {\bf Definition of exponents} 

We will define two exponents: one measures the random fluctuations of $\{X_{m,n}\}$ and the other measures the convergence rate of $\mathbb{E}X_{0,n}/n$ to $g$. The main results give a relation between the two.

\begin{df}\label{def:exponentgamma}
The exponents $\underline{\gamma}$ and $\overline{\gamma}$ are defined as
\[
\underline{\gamma} = \liminf_n \frac{\log (\mathbb{E}X_{0,n}-n g)}{\log n} \text{ and } \overline{\gamma} = \limsup_n \frac{\log (\mathbb{E}X_{0,n}-n g)}{\log n}\ .
\]
Here, $\log 0$ is defined as $-\infty$.
\end{df}

Similarly we define fluctuation exponents. For a random variable $Z$ and $p>0$, write $\|Z\|_p = \left( \mathbb{E}|Z|^p \right)^{1/p}$.

\begin{df}\label{def:exponentxi}
For $p>0$, the fluctuation exponents $\underline{\chi}_p$ and $\overline{\chi}_p$ are defined as
\[
\underline{\chi}_p = \liminf_{n\to \infty} \frac{\log \|X_{0,n} - \mathbb{E}X_{0,n} \|_p}{\log n} \text{ and } \overline{\chi}_p = \limsup_{n\to \infty} \frac{\log \|X_{0,n} - \mathbb{E}X_{0,n}\|_p}{\log n}\ .
\]
\end{df}

\noindent
Note that $\underline{\chi}_p \leq \overline{\chi}_p$ and by Jensen,
\[
\underline{\chi}_p \leq \underline{\chi}_q \text{ and } \overline{\chi}_p \leq \overline{\chi}_q \text{ if } p \leq q\ .
\]

\begin{remark} The definitions above only specify the first order growth of the quantities $ \E X_{0,n}-ng$ and $\| X_{0,n} - \E X_{0,n} \|_p$. For instance, if there exist positive constants $c, C$ such that 
\[
c n^{2\chi} \leq \Var X_{0,n} \leq C n^{2\chi} \text{ for all large }n\ ,
\]
then $\underline{\chi}_2 = \overline{\chi}_2 = \chi$, and similarly for $p \neq 2$ and $\underline{\gamma}, \overline{\gamma}$.
\end{remark}

\subsection{Main theorem}

We are now ready to state the main result of this paper. We give the following relation between the exponents defined above. Define $\Lambda = 2/\CC_5$ under mixing condition I (with $\CC_5$ from item 2 of that condition) and $\Lambda = 2$ under association condition II.

\begin{thm}\label{thm:main1}
Assume 1-4 and $\mathbb{E}|X_{0,n}|^{2+\delta}<\infty$ for some $\delta>0$ and all $n$.
\begin{enumerate}
\item If $\chi := \underline{\chi}_2 = \overline{\chi}_{2+\delta}<\infty$, then 
\[
\begin{cases}
\underline{\gamma} \geq \chi & \text{ if } \chi \neq 1/2 \\
\overline{\gamma} \geq \chi - \frac{1}{2}(\Lambda^{\beta^{-1}}-1) & \text{ if } \Var X_{0,n} = O\left( \frac{n}{(\log n)^\beta} \right) \text{ for some }\beta>0 \\
\end{cases}\ .
\]
\item If $\chi := \overline{\chi}_2=\overline{\chi}_{2+\delta}<\infty$, then if $\chi < 1/2$, $\overline{\gamma} \geq \chi$.
\end{enumerate}
\end{thm}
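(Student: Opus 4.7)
The plan is to iterate subadditivity at two scales. Fix $b \in \mathbb{N}$, set $N = bn$, and write
\[
X_{0,N} = S_b - Y_b, \qquad S_b := \sum_{i=1}^{b} X_{(i-1)n,\, in}, \quad Y_b \geq 0.
\]
Taking expectations and using $\mathbb{E} X_{0,N} \geq Ng$ (since $g = \inf_m \mathbb{E} X_{0,m}/m$), one gets
\[
\mathbb{E} Y_b = b(\mathbb{E} X_{0,n} - ng) - (\mathbb{E} X_{0,N} - Ng) \leq b(\mathbb{E} X_{0,n} - ng),
\]
so any lower bound on $\mathbb{E} Y_b$ produces a lower bound on $\mathbb{E} X_{0,n} - ng$. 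The goal is to obtain such a bound from the assumed growth of $\Var X_{0,n}$.

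The weak-dependence assumptions (Condition I or Condition II via \eqref{eq:covcond}) give a two-sided estimate $\Lambda^{-1} b\,\Var X_{0,n}\,(1 - o(1)) \leq \Var S_b \leq C b\,\Var X_{0,n}\,(1 + o(1))$ as $n \to \infty$. Because $X_{0,N} - \mathbb{E} X_{0,N} = (S_b - \mathbb{E} S_b) - (Y_b - \mathbb{E} Y_b)$, the $L^2$ triangle inequality yields
\[
(\Var Y_b)^{1/2} \geq \bigl|\, (\Var S_b)^{1/2} - (\Var X_{0,N})^{1/2} \,\bigr|.
\]
Assuming $\Var X_{0,n} \approx n^{2\chi}$, the right-hand side is comparable to $n^{\chi}\,|\sqrt{b/\Lambda} - b^\chi|$. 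For $\chi \neq 1/2$ the exponents $1/2$ and $\chi$ separate, so fixing $b$ to be a sufficiently large constant depending on $\chi$ and $\Lambda$ gives $\Var Y_b \geq c\, n^{2\chi}$ for all large $n$. To convert this into a lower bound on $\mathbb{E} Y_b$, I use $Y_b \geq 0$: either $\mathbb{E} Y_b^2 \leq 2 (\mathbb{E} Y_b)^2$, so $\mathbb{E} Y_b \geq (\Var Y_b/2)^{1/2}$ directly, or $\Var Y_b \geq \tfrac12 \mathbb{E} Y_b^2$, and Lyapunov's inequality
\[
\mathbb{E} Y_b^2 \leq (\mathbb{E} Y_b)^{\delta/(1+\delta)}\, \bigl(\mathbb{E} |Y_b|^{2+\delta}\bigr)^{1/(1+\delta)}
\]
combined with the $(2+\delta)$-moment hypothesis (which bounds $\mathbb{E}|Y_b|^{2+\delta}$ in terms of the moments of $X_{0,n}$ and $X_{0,N}$) gives the corresponding lower bound on $\mathbb{E} Y_b$. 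With $b$ fixed, both alternatives yield $\mathbb{E} Y_b \geq c' n^\chi$, so $\mathbb{E} X_{0,n} - ng \geq (c'/b) n^\chi$, which is exactly $\underline{\gamma} \geq \chi$. Part 2 follows from the same argument with $\liminf$ replaced by $\limsup$ throughout, and the hypothesis $\overline{\chi}_2 = \overline{\chi}_{2+\delta}$ ensuring that the variance-versus-higher-moment comparison is sharp along the subsequence realizing $\overline{\chi}_2 = \chi$.

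The main obstacle is the borderline case $\chi = 1/2$, where the factors $\sqrt{b/\Lambda}$ and $b^\chi$ no longer separate and no single fixed $b$ creates a gap between $\Var S_b$ and $\Var X_{0,N}$. Here I plan to iterate the splitting over geometrically growing scales $n_k$, using the log-corrected hypothesis $\Var X_{0,n} = O(n/(\log n)^\beta)$ to extract a small positive gap at each level. The per-iteration gain is governed by $\Lambda$ while the per-iteration loss is governed by $(\log n_k)^{-\beta}$; summing these contributions along a geometric progression of $\sim \log \log n$ scales produces the factor $\Lambda^{1/\beta}$ and the claimed bound $\overline{\gamma} \geq \chi - \tfrac12(\Lambda^{1/\beta} - 1)$. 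The hard part will be tuning the scales $n_k$ and block sizes $b_k$ so that the accumulated losses combine to the sharp exponent rather than degrading to something weaker.
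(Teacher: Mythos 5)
Your decomposition $X_{0,bn} = S_b - Y_b$, $Y_b\geq 0$, together with the observation that $\mathbb{E}Y_b \leq b(\mathbb{E}X_{0,n}-ng)$, captures exactly the sub-additivity input the paper uses (the paper writes the equivalent $X'_{0,mn}\leq \sum_k X'_{(k-1)n,kn} + m[\mathbb{E}X_{0,n}-ng]$). But as written your argument has a genuine gap in the $\chi\neq 1/2$ case, and the $\chi=1/2$ case is only a plan.

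The gap: you fix the number of blocks $b$ and let $n\to\infty$, and you need both $\Var S_b \asymp b\,\Var X_{0,n}$ and $\Var X_{0,bn}\approx b^{2\chi}\Var X_{0,n}$ so that $\sqrt{b/\Lambda}$ and $b^\chi$ ``separate.'' The first estimate comes from assumption~4 (it relates the variance of a \emph{block sum at scale $n$} to $\Var X_{0,n}$), but the second is a cross-scale statement that does \emph{not} follow from $\underline{\chi}_2=\overline{\chi}_{2+\delta}=\chi$. That hypothesis only gives $\Var X_{0,n}=n^{2\chi+o(1)}$; the $o(1)$ correction can be $a_n$ with $a_n\to 0$ but $(a_{bn}-a_n)\log n$ unbounded, in which case $\Var X_{0,bn}/\Var X_{0,n}$ does not converge to $b^{2\chi}$ for fixed $b$ and may even be unbounded. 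Nothing in assumptions~1--4 constrains this ratio. The paper avoids this by taking the block number $k_n$ to grow with the scale $l_n$ (e.g. $k_n=\lfloor l_n^{\epsilon}\rfloor$), so that only the \emph{exponents} of $\Var S_{k_n}$ and $\Var X_{0,k_nl_n}$ matter in the limit, and those are what the $\chi$-hypothesis controls; compare the exponent arithmetic following~\eqref{eq: other}. If you let $b=b_n=n^\epsilon$ grow and redo your $L^2$ triangle-inequality bound with the centered $(2+\delta)$-moment (which, via (M2b) or the mixing assumptions, scales like $b_n^{1/2}\|X'_{0,n}\|_{2+\delta}$), your route does recover $\underline{\gamma}\geq\chi$ for $\chi\neq 1/2$ after sending $\epsilon\to 0$; it is essentially the paper's Theorem~\ref{thm: shorter} in different clothing.

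Two smaller points in the moment-conversion step. Lyapunov's inequality as you wrote it involves $\mathbb{E}|Y_b|^{2+\delta}$, an \emph{uncentered} moment; the only thing your hypotheses control is the centered moment $\mathbb{E}|Y_b-\mathbb{E}Y_b|^{2+\delta}$ (the uncentered $\|X_{0,n}\|_{2+\delta}$ grows like $n$, not $n^\chi$, when $g\neq 0$, which ruins the exponent count). Also your dichotomy threshold $\mathbb{E}Y_b^2\leq 2(\mathbb{E}Y_b)^2$ is too tight to close the $c_r$-inequality loop; a larger constant (e.g.\ $4$) is needed. A cleaner route, also essentially the paper's, is to observe $Y_b'\geq -\mathbb{E}Y_b$ so that $\mathbb{E}Y_b\geq -\mathbb{E}(Y_b')_- = \tfrac12\mathbb{E}|Y_b'|$, and then lower-bound $\mathbb{E}|Y_b'|$ by $(\Var Y_b)^{(1+\delta)/\delta}\|Y_b'\|_{2+\delta}^{-(2+\delta)/\delta}$; this avoids the dichotomy and uses only centered quantities (it is the Paley--Zygmund Lemma~\ref{lem: PZ} in disguise).

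Finally, the $\chi=1/2$ case is where the interesting constant $\tfrac12(\Lambda^{1/\beta}-1)$ comes from, and your proposal only gestures at it. The paper's mechanism is different from what you describe: there is no ``per-iteration loss governed by $(\log n_k)^{-\beta}$.'' Instead one first proves, via a triangular-array CLT under mixing or association (Proposition~\ref{prop: main_estimate}), the recursive inequality $\Var X_{0,k_nl_n}\geq (1-o(1))\Lambda^{-1}k_n\Var X_{0,l_n}$ whenever $k_n$ is small relative to $\Var X_{0,l_n}/(\mathbb{E}X_{0,l_n}-l_ng)^2$ (Corollary~\ref{corollary: main_cor}); this forces $v(n):=\Var X_{0,n}/n$ to satisfy $v(n_{j+1})(\log n_{j+1})^A\geq v(n_j)(\log n_j)^A$ along a doubly-exponential sequence $n_{j+1}=k_{n_j}n_j$, which contradicts $\Var X_{0,n}=O(n/(\log n)^\beta)$ unless $A\geq\beta$, and that pins down the exponent bound. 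You would need to supply this CLT input and this iteration to complete your sketch.
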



\begin{remark}

If $X_{m,n} = \sum_{k=m+1}^n X_k$, where $(X_k)$ is a given i.i.d. sequence with $2+\delta$ moments, then $\chi=1/2$ but $\underline{\gamma} = \overline{\gamma} = -\infty$. The assumption $\Var X_{0,n} = O\left(n/(\log n)^\beta\right)$ for some $\beta>0$ ensures sub-linear variance and rules out such i.i.d. sums. Note that if this condition holds for all $\beta>0$ then we obtain the result $\overline{\gamma} \geq \chi$.
\end{remark}

\begin{remark}

The assumption $\underline{\chi}_2 = \overline{\chi}_{2+\delta}$ is satisfied if there exists some sequence of constants $b_n$ with $(\log b_n)/\log n \to \chi$ such that for some non-degenerate $Z$,
\[
b_n^{-1}(X_{0,n} -  \E X_{0,n}) \to Z \text{ in } L^{2+\delta} \text{ as } n \to \infty\ .
\] 
This is expected in all examples that we discuss in Section \ref{sec:examples}.
\end{remark}

The case $\chi\neq 1/2$ of Theorem~\ref{thm:main1} can also be handled with moment assumptions instead of assumption~4. We state this result as a separate theorem. Setting $X_{0,n}' = X_{0,n}-\mathbb{E}X_{0,n}$, the relevant moment conditions are:
\begin{enumerate}
\item[(M1). ] For some $\CC_1>0$, $\Var \sum_{k=1}^b X_{(k-1)n,kn} \leq \CC_1 b \Var X_{0,n}$ for all $n$ and $b$.
\item[(M2). ] For some $\CC_2,\CC_3$ and $\delta>0$, both of the following hold.
\begin{enumerate}
\item $\Var \sum_{k=1}^b X_{(k-1)n,kn} \geq \CC_2 b \Var X_{0,n}$ for all $n$ and $b$ and
\item $\mathbb{E}\left| \sum_{k=1}^b X_{(k-1)n,kn}'\right|^{2+\delta} \leq \CC_3 b^{1+\frac{\delta}{2}} ~\mathbb{E}|X_{0,n}'|^{2+\delta}$ for all $n$ and $b$.
\end{enumerate}
\end{enumerate}
(M1) and (M2a) are implied by either mixing condition I or association condition II. Furthermore, all of these conditions hold if for each $n$, $X_{0,n}, X_{n,2n}, \ldots$ are independent with $2+\delta$ moments.

\begin{thm}\label{thm: shorter}
Assume 1-3 and $\mathbb{E}|X_{0,n}|^{2+\delta}<\infty$ for some $\delta>0$ and all $n$. Assume that $\chi := \underline{\chi}_2 = \overline{\chi}_{2+\delta}<\infty$. 
\begin{enumerate}
\item If (M1) holds and $\chi > 1/2$ then $\underline{\gamma} \geq \chi$.
\item If (M2) holds and $\chi < 1/2$, then $\underline{\gamma} \geq \chi$.
\end{enumerate}
\end{thm}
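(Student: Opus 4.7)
The plan is to exploit a.s.\ subadditivity: for integers $b, n \geq 1$, $X_{0, bn} \leq S_b := \sum_{k=1}^b X_{(k-1)n, kn}$, so $Y := S_b - X_{0,bn}$ is a non-negative random variable with $\E Y = b(\E X_{0,n} - ng) - (\E X_{0, bn} - bng) \leq b(\E X_{0,n} - ng)$. Consequently, any bound of the form $\E Y \geq c b \, n^{\chi - \epsilon}$, with $b$ a subpolynomial function of $n$ and $\epsilon > 0$ arbitrarily small, gives $\underline{\gamma} \geq \chi$ upon sending $\epsilon \to 0$. I will produce such a bound by combining a lower estimate on $\Var Y$, an upper estimate on $\|Y - \E Y\|_{2+\delta}$, and the non-negativity of $Y$.

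For the variance, I apply the reverse triangle inequality in $L^2$ to $X_{0,bn} = S_b - Y$, obtaining $\sqrt{\Var Y} \geq \bigl|\sqrt{\Var S_b} - \sqrt{\Var X_{0,bn}}\bigr|$. The hypotheses $\underline{\chi}_2 = \overline{\chi}_{2+\delta} = \chi$ give $m^{\chi - \epsilon} \leq \Var X_{0,m}^{1/2} \leq m^{\chi + \epsilon}$ for $m$ large. Under (M1), $\Var S_b^{1/2} \leq \sqrt{\CC_1 b} \, n^{\chi + \epsilon}$, which is dominated by $\Var X_{0,bn}^{1/2} \geq (bn)^{\chi - \epsilon}$ once $b$ is a small positive power of $n$ and $\chi > 1/2$; under (M2a), $\Var S_b^{1/2} \geq \sqrt{\CC_2 b} \, n^{\chi - \epsilon}$ dominates $\Var X_{0,bn}^{1/2} \leq (bn)^{\chi + \epsilon}$ in the same regime when $\chi < 1/2$. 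In both cases $\sqrt{\Var Y} \geq c M$, where $M = (bn)^{\chi - \epsilon}$ in part 1 and $M = \sqrt{b} \, n^{\chi - \epsilon}$ in part 2.

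For $\|Y - \E Y\|_{2+\delta}$, the triangle inequality gives $\|Y - \E Y\|_{2+\delta} \leq \|S_b - \E S_b\|_{2+\delta} + \|X_{0, bn} - \E X_{0, bn}\|_{2+\delta}$, and the second summand is at most $(bn)^{\chi + \epsilon}$ by $\overline{\chi}_{2+\delta} = \chi$. For the first, Minkowski alone yields $\|S_b - \E S_b\|_{2+\delta} \leq b \, n^{\chi + \epsilon}$ in part 1, while (M2b) gives the sharper $\|S_b - \E S_b\|_{2+\delta} \leq C\sqrt{b} \, n^{\chi + \epsilon}$ in part 2 (since $(1 + \delta/2)/(2+\delta) = 1/2$); I denote the resulting bound by $N$. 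Since $Y \geq 0$, log-convexity of $p \mapsto \log \E Y^p$ furnishes the interpolation $(\E Y^2)^{1+\delta} \leq (\E Y)^{\delta} \E Y^{2+\delta}$. From $\|Y\|_{2+\delta} \leq \E Y + N$ I split into two cases: if $\E Y \geq N$, the interpolation forces $\E Y \geq c\sqrt{\Var Y} \geq cM$; if $\E Y < N$, it forces $\E Y \geq c(\Var Y)^{(1+\delta)/\delta}/N^{(2+\delta)/\delta}$. A direct computation of the resulting exponents shows that, for the choice of $b$ made above, $\E Y/b \geq n^{\chi - O(\epsilon)}$ for all large $n$; sending $\epsilon \to 0$ yields $\underline{\gamma} \geq \chi$.

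The main obstacle is the bookkeeping in the case analysis: in part 1 the $b$-exponent produced by the $\E Y < N$ lower bound on $\E Y$ works out to $(2\chi - 1) + (2\chi - 2)/\delta$, which is negative when $\delta$ is small and $\chi < 1$. This forces $b$ to be only a slowly-growing power of $n$, while simultaneously $b$ must be large enough to guarantee the separation of the two $L^2$ terms from Step~1. Reconciling these two constraints and tracking that the accumulated $O(\epsilon)$ losses in the $n$-exponent are genuinely linear in $\epsilon$ is the central technical point; the analogous balancing for part 2 is slightly easier because the $\sqrt{b}$ scaling in $N$ (coming from (M2b)) matches the one in $M$, so the ratio $M^{2(1+\delta)/\delta}/N^{(2+\delta)/\delta}$ collapses cleanly. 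Once the constraints are reconciled, this single-scale argument closes without invoking the iterative variance-comparison scheme developed for the main theorem.
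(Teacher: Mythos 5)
Your proposal is correct, and it reaches the conclusion of Theorem~\ref{thm: shorter} by a genuinely different (though related) mechanism. The paper starts from the same subadditivity bound and the same scale choice $m=k_n=\lfloor l_n^{\epsilon}\rfloor$, but argues by contradiction: it applies the Paley--Zygmund-type Lemma~\ref{lem: PZ} (the $\|\cdot\|_2/\|\cdot\|_{2+\delta}$ ratio) to $X_{0,mn}'$ when $\chi>1/2$, and to the block sum $S_{m,n}$ together with the elementary inequality $(a+b)_-^2\geq \tfrac14 a_-^2-b^2$ when $\chi<1/2$, and then compares exponents to force $\chi\leq 1/2$ or $\chi\geq 1/2$. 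You instead work directly with the non-negative subadditivity defect $Y=S_b-X_{0,bn}$, lower-bound $\Var Y$ via the reverse triangle inequality in $L^2$, upper-bound $\|Y-\E Y\|_{2+\delta}$ via Minkowski in part 1 and (M2b) in part 2, and convert these into a lower bound on $\E Y$ through the interpolation $(\E Y^2)^{1+\delta}\leq(\E Y)^{\delta}\,\E Y^{2+\delta}$ --- the same H\"older mechanism underlying Lemma~\ref{lem: PZ}, but deployed on the non-negative variable $Y$ rather than as an anti-concentration statement for centered sums. I checked the bookkeeping you flag as the central point and it does close: writing $b=\lfloor n^{\rho}\rfloor$, the only constraint from the $L^2$ separation is $\rho\,|\chi-\tfrac12|\gtrsim\epsilon$, so one may take $\rho$ proportional to $\epsilon$; then in part 1 the possibly negative $b$-exponent $(2\chi-1)+(2\chi-2)/\delta$ costs only $\rho\cdot 2(1-\chi)(1+1/\delta)+O(\epsilon/\delta)$ in the $n$-exponent (and when $\chi+\epsilon\geq1$ the dominant term of $N$ becomes $(bn)^{\chi+\epsilon}$, which only improves matters), while in part 2 the cost is $\rho/2+O(\epsilon/\delta)$; both losses are $O_{\chi,\delta}(\epsilon)$ and vanish as $\epsilon\to0$. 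What your route buys is a direct quantitative bound, $\E X_{0,n}-ng\geq n^{\chi-O(\epsilon)}$ for all large $n$, with no contradiction argument and no need to pass to a subsequence realizing $\underline{\gamma}$; what the paper's route buys is lighter exponent arithmetic (a single max-inequality in each case) and a formulation that adapts immediately to the $\overline{\chi}_2$ variant of Remark~\ref{rem: fancy}, where the variance lower bound is only available along a subsequence and your ``for all large $n$'' argument would have to be localized to that subsequence.
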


\begin{remark}\label{rem: fancy}
If instead $\chi:=\overline{\chi}_2 = \overline{\chi}_{2+\delta} < 1/2$ for some $\delta>0$, then the above result holds with $\underline{\gamma}$ replaced by $\overline{\gamma}$.
\end{remark}

\begin{remark}
The proof of item 2 can be adapted to the case that (M2) holds with $1+\delta/2$ replaced by $\sigma$ for any $\sigma < 1+\frac{\delta}{2} + \frac{\delta}{2}(1/2-\chi)$.
\end{remark}

\section{Examples and a  counter-example}\label{sec:examples}
\subsection{A counter-example}
We start this section by showing that without any form of asymptotic independence (either assumption 4 or appropriate moment conditions), one should not expect the result of Theorem \ref{thm:main1} to hold. This can be seen by the following example. 
\begin{example}[Arbitrary $\chi$ and $\gamma$] Fix constants $H \in (0,1)$ and $h \in (-\infty, 1)$. We construct a sequence $\{X_{m,n}\}$ that satisfies assumptions 1, 2 and 3 and has exponents
$$ \chi = H, \quad  \gamma:= \overline \gamma = \underline \gamma = h.$$

Let $Y_t, t \in \R,$ be a fractional Brownian motion with Hurst exponent $H$. $Y_t$ is a continuous time Gaussian process with stationary increments that satisfies
$$ \E Y_t = 0  \quad \text{ and } \quad  \E Y_t Y_s = \frac{1}{2}\big(|t|^{2H} +|s|^{2H} - |t-s|^{2H}\big).$$
For $0\leq m<n$, we set $Y_{m,n} := Y_{n}-Y_{m}$. The array $\{ Y_{m,n}\}$ is additive, stationary under shifts, and has mean zero, so it satisfies assumptions 1-3. Furthermore, $\Var Y_{0,n} = \E Y_{0,n}^2= n^{2H}.$

Now, let $(x_n)$ be a sub-additive sequence of non-negative real numbers such that $x_n/n \rightarrow 0.$ We can choose the convergence of $x_n/n$ as slow as we want. Indeed, let $x_n \geq 0$ with $x_n/n$ decreasing to $0$.
We see that 

$$ x_{n+m} = m\frac{x_{n+m}}{n+m}+n\frac{x_{n+m}}{n+m} \leq m\frac{x_m}{m} + n\frac{x_n}{n}= x_m+x_n.$$

For $0\leq m<n$, define $$X_{m,n} = Y_{m,n} + x_{n-m}.$$
The array $\{ X_{m,n}\}$ satisfies 1, 2 and 3 and $\E X_{0,n}/n \to 0$. However we have  $\chi = H \in (0,1)$ and $\E X_{0,n}= x_n$ so $\gamma$ can be taken as any number in $(-\infty,1)$. 
\end{example} 

A word of comment is needed here. If one tries to build an example as above requiring the process $Y_t$ to have independent stationary increments, then one does not violate Theorem~\ref{thm:main1}. Indeed, a computation shows that the covariance of  the process $Y_t$ must satisfy  $\E Y_t Y_s~=~\sigma \min\{s,t\}$ for some $\sigma>0$ and thus $\chi =1/2$. This example stresses once more that the assumption of anomalous diffusion is necessary in Theorem~\ref{thm:main1}. 



We now turn our attention to examples that satisfy our main hypothesis. 
\subsection{First-Passage Percolation}

Let $(t_e)_{e \in \mathcal{E}^d}$ be a collection of nonnegative, i.i.d. random variables assigned to the nearest-neighbor edges $\mathcal{E}^d$ of the integer lattice $\mathbb{Z}^d$. In first-passage percolation, we consider the pseudo-metric induced by these weights. Namely, the \emph{passage time} between vertices $x,y \in \mathbb{Z}^d$ is defined
\[
T(x,y) = \inf_{\pi : x \to y} T(\pi)\ ,
\]
where $\pi$ is a lattice path from $x$ to $y$ (a sequence $x=x_0, \ldots, x_n=y$ of vertices such that $\|x_k-x_{k+1}\|_1=1$ for $k=0, \ldots, n-1$) and $T(\pi) = \sum_{k=0}^{n-1} t_{\{x_k, x_{k+1}\}}$ is the sum of weights along $\pi$. As usual, we will assume
\begin{equation}\label{eq: percolation_assumption}
\mathbb{P}(t_e=0)<p_c(d)\ ,
\end{equation} 
where $p_c(d)$ is the critical probability for $d$-dimensional bond percolation.

Given a vertex $x \in \mathbb{Z}^d$, the sequence
\begin{equation}\label{eq: passage_def}
\{X_{m,n}\} = \{T(mx,nx) : m,n \in \mathbb{N},~ 0\leq m < n\}
\end{equation}
is a sub-additive process and given that $\E X_{0,1} < \infty$ it satisfies assumptions 1-3 from Section~\ref{sec: setting}.  The \emph{time constant} $g(x)$ is defined as $g(x) = \lim_n \frac{\mathbb{E}T(0,nx)}{n}$.  We will show that this model satisfies condition 4 in section \ref{sec:fpp}.

Our next theorem gives the bound $\overline{\gamma} \geq -1/2$ under minimal assumptions. This result should be compared to \cite[Theorem~1]{Kesten}, where it is shown that $\underline{\gamma} \geq -1$.
\begin{thm}\label{thm: second_fpp_thm}
Assume \eqref{eq: percolation_assumption}, that the distribution of $t_e$ is not concentrated at a point, and that 
\begin{equation}\label{eq: exp_assumption}
\mathbb{E}e^{\alpha t_e} < \infty \text{ for some } \alpha>0\ .
\end{equation}
One has the bound $\overline{\gamma} \geq -1/2$: for any nonzero $x \in \mathbb{Z}^d$ and $\epsilon>0$,
\[
\mathbb{E}T(0,nx) - ng(x) \geq n^{-\frac{1}{2}-\epsilon} \text{ for infinitely many } n\ .
\]
\end{thm}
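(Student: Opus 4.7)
The plan is to argue by contradiction using a variant of the iterative variance recursion that drives the proof of Theorem \ref{thm:main1}, but relying only on Kesten's \emph{a priori} linear variance bound rather than on existence of a fluctuation exponent. First, I would verify assumption 4 for the subadditive process $X_{m,n} := T(mx, nx)$: since passage times are coordinatewise nondecreasing functions of the i.i.d.\ weights $(t_e)$, the family $\{X_{m,n}\}$ is positively associated (Harris--FKG), so Condition II is the natural target. The covariance decay \eqref{eq:covcond} should follow from standard FPP concentration under the exponential-moment assumption, plus the fact that with overwhelming probability the geodesic from $imx$ to $(i+1)mx$ lies in a thin cylinder around the straight segment, so well-separated block passage times depend on essentially disjoint edges. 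Section \ref{sec:fpp} is where this is supposed to be carried out in detail.

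The two external FPP inputs I would quote are Kesten's linear upper bound $\Var T(0,nx) \le C_K n$ (which, combined with exponential moments, gives $\overline{\chi}_{2+\delta}\le 1/2$ for some $\delta>0$) and a positive lower bound $\Var T(0,nx) \ge c_0$ coming from non-degeneracy of $t_e$. Now suppose, toward a contradiction, that for some $\varepsilon>0$ and all $n\ge N_0$ one has $a_n := \E T(0,nx) - ng(x) \le n^{-1/2-\varepsilon}$. Writing $S_b^m = \sum_{i=1}^b T((i-1)mx, imx)$ and $D_{b,m} = S_b^m - T(0,bmx) \ge 0$, subadditivity gives $\E D_{b,m} = b\,a_m - a_{bm}\le b m^{-1/2-\varepsilon}$. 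Expanding the variance of $T(0,bmx) = S_b^m - D_{b,m}$ and applying Cauchy--Schwarz yields
\[
\Var T(0,bmx) \;\ge\; \Var S_b^m \;-\; 2\sqrt{\Var S_b^m}\sqrt{\Var D_{b,m}},
\]
while positive association together with \eqref{eq:covcond} gives $\Var S_b^m \ge c\,b\,\Var T(0,mx)$, and the exponential-moment assumption combined with a truncation argument bounds $\Var D_{b,m}$ in terms of $\E D_{b,m}$ and $\|D_{b,m}\|_{2+\delta}$.

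I would then iterate this inequality along a doubling sequence of scales $m_k = 2^k m_0$. Under the contradiction hypothesis, the defect corrections form a geometric-type sum that is negligible compared to the linear-in-$b$ main term, and one obtains
\[
\Var T(0, 2^k m_0 x) \;\ge\; 2^k \Var T(0, m_0 x) \;-\; E_k,
\]
with $E_k$ summing to $o(2^k m_0)$ as $m_0\to\infty$. For $m_0$ large enough this forces $\Var T(0, 2^k m_0 x)$ to exceed Kesten's linear bound $C_K\cdot 2^k m_0$, the desired contradiction. The appearance of the exponent $-1/2$ is then transparent: it is exactly the threshold at which the defect correction balances the diffusive rate $\sqrt{b\cdot \Var T(0,mx)}$ permitted by the upper bound.

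The main obstacle will be controlling $\Var D_{b,m}$ precisely enough that the iterative scheme does not leak variance at each doubling step; this is where the exponential-moment hypothesis is crucial (entering through moment inequalities of the type used to get (M2) from mixing, cf.\ Lemma \ref{lem:orthogcontrol}). A secondary subtlety is that, because we are not assuming $\underline{\chi}_2 = \overline{\chi}_{2+\delta}$, the proof of Theorem \ref{thm:main1} cannot be invoked as a black box; one must re-run its key recursion using the one-sided bounds $c_0 \le \Var T(0,nx) \le C_K n$ in place of a two-sided power law, and accept a limsup-only conclusion (``infinitely many $n$'') in place of the stronger liminf statement that would require a genuine $\chi$.
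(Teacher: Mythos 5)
Your scheme breaks down at the final, decisive step: the contradiction target. The only variance upper bound you import is Kesten's linear bound $\Var T(0,nx)\le C_K n$, but a doubling recursion of the form $\Var T(0,2mx)\ge 2\Var T(0,mx)-E_m$ can never contradict a \emph{linear} bound, because linear growth is exactly the fixed point of the map $V\mapsto 2V$. Concretely, iterating your inequality gives $\Var T(0,2^k m_0 x)\ge 2^k\bigl(\Var T(0,m_0x)-o(m_0)\bigr)$, and since the only a priori information at the base scale is $c_0\le \Var T(0,m_0x)\le C_K m_0$, this is always compatible with the bound $C_K\, 2^k m_0$; your assertion that ``for $m_0$ large enough this forces $\Var T(0,2^km_0x)$ to exceed Kesten's linear bound'' does not follow. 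This is precisely why the paper does \emph{not} use the linear bound: the exponential moment hypothesis \eqref{eq: exp_assumption} is there to invoke the Damron--Hanson--Sosoe subdiffusive bound $\Var T(0,nx)=O(n/\log n)$ \cite{DHS14}, i.e.\ the $\beta=1$ case of the second alternative in Theorem~\ref{thm:main1}. The paper then runs the iteration of Corollary~\ref{corollary: main_cor} (a CLT lower bound on the negative part, Proposition~\ref{prop: main_estimate}, giving a per-jump variance ratio $\ge a$ for any $a<1/2$) along \emph{polynomially growing} scale jumps $n_{j+1}=k_{n_j}n_j\ge n_j^{2+\epsilon/2}$, so that the constant per-jump loss accumulates only to a factor $(\log n)^A$ with $A=\log(1/a)/\log(2+\epsilon/2)$; letting $a\uparrow 1/2$ forces $A<1$, contradicting $O(n/\log n)$, and this is exactly where the threshold $-1/2$ comes from ($\Lambda=2$, $\beta=1$), not from balancing a defect term against a diffusive $\sqrt{b\Var}$ rate. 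Your route could in principle be repaired by replacing Kesten's bound with the DHS bound and checking that your per-step errors $\eta_j$ are summable along dyadic scales (your H\"older interpolation of $\Var D_{b,m}$ between $\E D_{b,m}\le b\,m^{-1/2-\varepsilon}$ and a $(2+\delta)$-moment of size $m^{(2+\delta)/2}$ does give a negative power of $m$ once $\delta>1/\varepsilon$, which exponential moments permit), but as written the argument proves nothing.

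A secondary issue: to verify the covariance decay \eqref{eq:covcond} you appeal to geodesics lying ``in a thin cylinder around the straight segment with overwhelming probability.'' No such transversal-fluctuation localization is known unconditionally in FPP, and it is not what Section~\ref{sec:fpp} does. The paper only needs that a geodesic between $imx$ and $(i+1)mx$ does not have diameter comparable to the distance to a separating hyperplane, which follows from the linear geodesic-diameter bound of Theorem~\ref{thm: geodesic_length} (itself requiring a percolation argument under \eqref{eq: percolation_assumption} alone), combined with Davydov's inequality and Kesten's constant variance lower bound to normalize. So the weak-dependence verification is salvageable, but not by the mechanism you cite.
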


Theorem~\ref{thm: second_fpp_thm} is proved in Section~\ref{sec: second_fpp}.

\begin{remark}
Alexander \cite{AlexanderDirected} has remarked (see also a proof in Chatterjee \cite{Chatterjee2013}) that if $\hat \chi$ is any number such that for some $a>0$,
\[
\mathbb{P}\left( |T(0,y) - \mathbb{E}T(0,y)| \geq \lambda \|y\|_1^{\hat \chi} \right) \leq e^{-a \lambda} \text{ for all } \lambda\geq 0,~ y \in \mathbb{Z}^d\ ,
\]
then $\overline{\gamma} \leq \hat \chi$. Note that if this exponential inequality holds for some $\hat \chi$, then $\overline{\chi}_p \leq \hat \chi$ for all $p>0$. Combining these observations with Theorem~\ref{thm: main_fpp_thm}, if $\hat \chi$ can be taken to be $\chi: = \hat \chi = \underline{\chi}_2$, then $\gamma:= \underline{\gamma} = \overline{\gamma} = \chi$ when $\chi<1/2$ and $\overline{\gamma} = \chi$ under the assumption $\Var T(0,nx) = O(n/(\log n)^\beta)$ for every $\beta>0$.
\end{remark}

\begin{remark}
Under the assumption $\mathbb{E}t_e^2 (\log t_e)_+<\infty$, it was shown by Damron-Hanson-Sosoe \cite[Theorem 1.1]{DHS} that $\Var T(0,nx) = O(n/\log n)$. Combining this with Theorem~\ref{thm: main_fpp_thm}, if $\underline{\chi}_2 = \overline{\chi}_{2+\delta}$ for some $\delta>0$ and \eqref{eq: percolation_assumption} holds, then $\overline{\gamma} \geq 0$.
\end{remark}

\subsection{Directed first-passage percolation}

The only difference between FPP and directed FPP is the constraint that all paths under consideration are directed; that is, for $x,y \in \mathbb{Z}^d$, write $y \leq x$ if this inequality holds coordinate-wise, and a path $y=x_0, x_1, \ldots, x_n =x$ is directed if $x_i \geq x_{i-1}$ for $i=1, \ldots, n$. We again place i.i.d. nonnegative passage times $(t_e)$ on $\mathcal{E}^d$ but the passage times $T(y,x)$ are only defined if $y \leq x$. For $x \geq 0$ in $\mathbb{Z}^d$, we define the collection $\{X_{m,n}\}$ as $X_{m,n} = T(mx,nx)$ and we see as before that if $\mathbb{E}T(0,nx) < \infty$ then $\{X_{m,n}\}$ satisfies assumptions 1-3 from Section~\ref{sec: setting}.

Now, however, the variables $X_{0,n}, X_{n,2n}, \ldots$ are independent for each $n$. So if we assume that $\underline{\chi}_2 = \overline{\chi}_{2+\delta} < \infty$ for some $\delta>0$ then association condition II holds and we can directly apply Theorem~\ref{thm:main1} to conclude $\underline{\gamma} \geq \chi$ when $\chi < 1/2$ and $\overline{\gamma} \geq \chi - \frac{1}{2} (2^{\beta^{-1}}-1)$ when $\Var T(0,nx) = O(n/(\log n)^\beta)$.

\subsection{Last-passage percolation}

In this model, as in directed FPP, paths are constrained to have non-decreasing coordinates. The weights $(t_v)$ are placed on vertices $v$ instead of edges and the passage time between two points $u \leq v$ is given by $T(u,v) = \max_{\pi:u\rightarrow v} T(\pi)$, where $\pi = \{u=x_0, \ldots, x_n=v\}$ is a directed path and $T(\pi) = \sum_{i=0}^{n-1} t_{x_i}$. (Note that we omit the last vertex in the sum.)

For a given $x \geq 0$ define the process $\{Y_{m,n}\}$ by $Y_{m,n} = T(mx,nx)$ as before, but note that $\{Y_{m,n}\}$ is super-additive. So setting $X_{m,n} = - Y_{m,n}$, we get a process that satisfies assumptions 1-3 as long as $\mathbb{E}t_v^d (\log t_v)_+^{d+\epsilon} < \infty$ for some $\epsilon>0$, (see \cite[Theorem 1]{CoxKesten}).  By independence of $X_{0,n}, X_{n,2n}, \ldots$, association condition II also holds. Therefore we can apply Theorem~\ref{thm:main1} again to $X_{m,n}$.

Translating back to the variables $T(mx,nx)$, if $g(x) = \sup_{n \in \mathbb{N}} \frac{1}{n} \mathbb{E}T(0,nx)$ and
\[
\underline{\gamma} = \liminf_n \frac{\log(ng(x) - \mathbb{E}T(0,nx))}{\log n},~ \overline{\gamma} = \limsup_n \frac{\log(ng(x)-\mathbb{E}T(0,nx))}{\log n}\ ,
\]
we find $\underline{\gamma} \geq \chi$ when $\chi < 1/2$ and $\overline{\gamma} \geq \chi - \frac{1}{2} (2^{\beta^{-1}}-1)$ when $\Var T(0,nx) = O(n/(\log n)^\beta)$.


The importance of this small variation of FPP is that there are correspondences between LPP models and certain queueing networks, namely systems of queues in tandem. In dimension two this connection reaches a deeper level as precise scaling laws have been obtained in  special cases. If the passage times are exponentially distributed with mean $1$ then Rost \cite[Theorem~1]{Rost} showed
$$
g(x) = g(x_1,x_2) = (\sqrt{x_1} + \sqrt{x_2})^2 \ .$$
If $t_x$ is geometric with parameter $p$ then \cite[Theorem~1.1]{Joha}
$$g(x_1,x_1) = \frac{1}{p} (x_1+x_2+ 2\sqrt{x_1x_2(1-p)})\ .$$
In both cases finer asymptotics are available \cite[Theorem~1.2]{Joha} as the distribution of
 \begin{equation}\label{Joha}
 \frac {T(0,n(x,y)) - ng(x,y)}{n^{1/3}} \Rightarrow Z \text{ as } n \to \infty
\end{equation} 
for a non-degenerate $Z$. The proof of \eqref{Joha} goes through the following special identity that identifies the law of the passage time with the law of the largest eigenvalue of the Laguerre Unitary ensemble. Let $A$ be an $n\times n$ matrix with entries that are complex Gaussian random variables with mean zero and variance $1/2$.

\begin{theorem}\label{thm:joha}\cite[Proposition 2.4]{Joha} If $t_x$ is exponentially distributed with mean $1$ then
\begin{equation}
\Pro( T(0,(n,n)) \leq t ) = \Pro (\lambda_n \leq t) \text{ for all } t \geq 0\ ,
\end{equation}
where $\lambda_n$ is the largest eigenvalue of the $n \times n$ matrix $AA^*$.
\end{theorem}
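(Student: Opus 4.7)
The plan is to establish the distributional identity via the Robinson--Schensted--Knuth (RSK) correspondence combined with a limit transition from a geometric discretization, following Johansson's original approach. Note first that the joint density of the eigenvalues of $AA^*$, ordered as $0 < \mu_1 < \cdots < \mu_n$, is proportional to
\[
\prod_{1 \le i < j \le n}(\mu_i - \mu_j)^2 \prod_{i=1}^n e^{-\mu_i},
\]
the Laguerre Unitary Ensemble (LUE); this is the standard Wishart eigenvalue calculation. Thus it suffices to show that $T(0,(n,n))$ equals in law the largest point of the LUE.

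First I would handle a tractable discrete analog. Replace each $t_{i,j}$ with a geometric weight $g_{i,j}$ satisfying $\Pro(g_{i,j} = k) = (1-q) q^k$ for $k \ge 0$, and denote the last-passage time in this model by $T_q(0,(n,n))$. Apply the RSK correspondence to the matrix $(g_{i,j})_{1 \le i,j \le n}$ to obtain a pair $(P,Q)$ of semi-standard Young tableaux of common shape $\lambda = (\lambda_1 \ge \cdots \ge \lambda_n \ge 0)$; by Greene's theorem, the first row equals the last-passage time, $\lambda_1 = T_q(0,(n,n))$. Using the Cauchy identity $\sum_\lambda s_\lambda(x) s_\lambda(y) = \prod_{i,j}(1 - x_i y_j)^{-1}$ together with the principal specialization formula for Schur functions, one shows that the shifted variables $\ell_i := \lambda_i + n - i$ are distributed as the Meixner ensemble
\[
\Pro(\ell_1, \dots, \ell_n) \propto \prod_{i<j}(\ell_i - \ell_j)^2 \prod_{i=1}^n q^{\ell_i}
\]
on strictly decreasing integer tuples with $\ell_n \ge 0$.

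Second, I would pass to the continuum limit by rescaling $g_{i,j} \mapsto (1-q) g_{i,j}$. As $q \uparrow 1$ the rescaled geometric weights converge in distribution to $\mathrm{Exp}(1)$, so by continuity of the last-passage functional applied to a finite matrix, $(1-q) T_q(0,(n,n)) \Rightarrow T(0,(n,n))$. Simultaneously, a direct density comparison shows that the rescaled Meixner ensemble $((1-q)\ell_1, \dots, (1-q)\ell_n)$ converges weakly to the LUE. Combining the two limits yields $T(0,(n,n)) \stackrel{d}{=} \mu_n$, the largest LUE point, which is the largest eigenvalue of $AA^*$. The main obstacle is the combinatorial step that identifies the RSK shape distribution with the Meixner ensemble; this is the heart of the argument and depends essentially on Schur function identities. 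The weak-convergence transition in the second step is technical but routine, e.g.\ via tightness combined with convergence of densities on compact sets, and the Wishart eigenvalue formula is classical.
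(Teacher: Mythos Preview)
The paper does not give a proof of this statement at all: Theorem~\ref{thm:joha} is quoted verbatim from \cite[Proposition~2.4]{Joha} and is used as a black box in the proof of Proposition~\ref{propLPP}. So there is no ``paper's own proof'' to compare against.

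That said, your sketch is a faithful outline of Johansson's original argument in \cite{Joha}: RSK plus Greene's theorem identifies $T_q(0,(n,n))$ with the first row of the shape, the Cauchy identity with principal specialization yields the Meixner ensemble on the shifted row lengths (your displayed weight $\prod_{i<j}(\ell_i-\ell_j)^2\prod_i q^{\ell_i}$ is correct precisely because the matrix is square; for an $N\times M$ rectangle one picks up an extra binomial factor), and the $q\uparrow 1$ scaling sends geometric weights to exponentials and Meixner to LUE. The only point worth flagging is that the phrase ``direct density comparison'' for the Meixner-to-LUE limit hides a Riemann-sum argument that needs the Vandermonde factor handled with a little care, but this is indeed routine and is carried out in \cite{Joha}. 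Nothing in your proposal is wrong; it simply reproduces a proof the present paper chose to cite rather than include.
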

 
The law of $\lambda_n$ is explicit and amenable to asymptotic analysis through the study of Laguerre orthogonal polynomials (see \cite{Deift00} and the references therein). 
In particular, \eqref{Joha} is a combination of the theorem above and  the fact that \cite[Remark~1.5]{Joha}

\begin{equation}\label{eq:tracy}
Z_n:=  \frac{\lambda_n - 4n}{2^{4/3}n^{1/3}} \stackrel{\mathcal D}{\rightarrow} W_2 \sim F_2,\
 \end{equation}
where 
$$F_2(s) = \exp\bigg(-\int_{s}^\infty (x-s) q(x)^2 d x\bigg)$$ 
with $q$ the solution of the Painlev\'e II differential equation
\begin{eqnarray*}
q''(x)= xq(x) +2q^3(x)\\
q(x)\sim Ai(x) \text{ as } x \rightarrow +\infty
\end{eqnarray*}
and $Ai(x)$ denotes the Airy function.
Our assumptions on Theorem~\ref{thm:main1} now translate to first order asymptotics of the moments of order $k$ of $\lambda_n$. Although the limit \eqref{eq:tracy} is widely known, these asymptotics were obtained only in \cite[Corollary 1.3]{Jinho} where it was shown that 
\begin{equation}\label{eq:convmoments}
 \E Z_n^k \rightarrow \E W_2^k <\infty \text{ for any }k \in \mathbb{N}\ .
\end{equation}
In particular this implies

\begin{proposition} \label{propLPP} The assumptions of Theorems \ref{thm:main1} hold for LPP with exponential and geometric weights. Furthermore, for any $p\geq 1$, $\underline{\gamma} = \overline{\chi}_p = \underline{\chi}_p=1/3$.
\end{proposition}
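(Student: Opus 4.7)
The plan is to verify the hypotheses of Theorem \ref{thm:main1} for the process $X_{m,n} := -T(m(1,1),n(1,1))$, then identify the exponents explicitly using Johansson's identification with the LUE and the moment convergence in \eqref{eq:convmoments}. First I would check assumptions 1--3 exactly as in the LPP subsection above, and then verify the weak-dependence condition 4. Because of the vertex-weight convention (which omits one endpoint), the passage times $T(kn(1,1),(k+1)n(1,1))$ for $k=0,1,2,\ldots$ depend on disjoint collections of i.i.d. weights, so they are \emph{independent}. Since each $X_{m,n}$ is moreover a coordinatewise monotone function of the underlying weights, the full array is positively associated by FKG, and the covariance decay \eqref{eq:covcond} holds trivially with $u(r)=0$ for $r\geq 1$; thus association condition II is satisfied. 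The bound $\mathbb{E}|X_{0,n}|^{2+\delta}<\infty$ for every $\delta>0$ is immediate: $0\leq T(0,(n,n))\leq \sum_{i=0}^{2n-1} t_{x_i}$ along any fixed directed path, and for exponential or geometric weights every moment is finite.

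Next I would compute the fluctuation exponents using Theorem \ref{thm:joha}, which gives $T(0,(n,n))\stackrel{d}{=}\lambda_n$, together with \eqref{eq:tracy} and \eqref{eq:convmoments}. The moment convergence $\mathbb{E}Z_n^k\to \mathbb{E}W_2^k$ for all $k\in\mathbb{N}$ combined with the non-degeneracy of $W_2$ yields $\|Z_n-\mathbb{E}Z_n\|_{2k}\to \|W_2-\mathbb{E}W_2\|_{2k}\in(0,\infty)$. For arbitrary real $p\geq 1$, pick an integer $2k>p$; the boundedness of $\mathbb{E}|Z_n-\mathbb{E}Z_n|^{2k}$ gives uniform integrability of $|Z_n-\mathbb{E}Z_n|^p$, so by distributional convergence together with UI one gets $\|Z_n-\mathbb{E}Z_n\|_p\to\|W_2-\mathbb{E}W_2\|_p>0$. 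Rescaling,
\[
\|X_{0,n}-\mathbb{E}X_{0,n}\|_p \;=\; 2^{4/3}\,n^{1/3}\,\|Z_n-\mathbb{E}Z_n\|_p \;\sim\; c_p\, n^{1/3},
\]
which gives $\underline{\chi}_p=\overline{\chi}_p=1/3$ for every $p\geq 1$. In particular $\underline{\chi}_2=\overline{\chi}_{2+\delta}=1/3$ for any $\delta>0$, so the hypotheses of Theorem \ref{thm:main1} are met. The geometric-weight case proceeds identically using the corresponding Tracy--Widom scaling limit and moment convergence from the Johansson/Baik--Deift--Johansson literature.

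Finally, I would establish $\underline{\gamma}=1/3$ by sandwich. Since $\chi:=1/3<1/2$, Theorem \ref{thm:main1}(1) (first case, $\chi\neq 1/2$) applies directly and produces the lower bound $\underline{\gamma}\geq 1/3$. The matching upper bound does not require the iterative scheme and comes straight from the first-order asymptotic of the mean: $\mathbb{E}Z_n\to \mathbb{E}W_2$ gives
\[
n g(x)-\mathbb{E}T(0,nx) \;=\; 4n-\mathbb{E}\lambda_n \;=\; -2^{4/3}\,n^{1/3}\,\mathbb{E}Z_n \;\sim\; \bigl(-2^{4/3}\,\mathbb{E}W_2\bigr)\,n^{1/3},
\]
and because $\mathbb{E}W_2<0$ (a standard property of the GUE Tracy--Widom law, consistent with the super-additivity inequality $\mathbb{E}\lambda_n\leq 4n$) the right-hand side is a positive multiple of $n^{1/3}$. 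Hence $\overline{\gamma}=\underline{\gamma}=1/3$, completing the proof. The only genuinely delicate point is promoting the integer-moment convergence in \eqref{eq:convmoments} to $L^p$-convergence for all real $p\geq 1$; this is routine via uniform integrability supplied by the higher integer moments, and it is the one place where the strength of \cite[Corollary 1.3]{Jinho} (rather than just \eqref{eq:tracy}) is actually used.
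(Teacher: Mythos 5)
Your proposal is correct and follows essentially the paper's route: Johansson's identity (Theorem \ref{thm:joha}) plus the moment convergence \eqref{eq:convmoments} to pin down all the exponents at $1/3$, with association condition II coming for free from the independence of $X_{0,n},X_{n,2n},\ldots$; your extra care with uniform integrability for non-integer $p$ and the explicit use of $\mathbb{E}W_2<0$ only make explicit what the paper leaves implicit. The one cosmetic difference is that your appeal to Theorem \ref{thm:main1} for the lower bound $\underline{\gamma}\geq 1/3$ is unnecessary, since the same first-order asymptotics of $\mathbb{E}\lambda_n$ that give your upper bound already yield both bounds directly, which is how the paper argues.
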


\begin{proof}
The proof follows directly from \eqref{eq:convmoments} and Theorem \ref{thm:joha}. Indeed, for any $\gamma'<1/3<\gamma''$, we have for $n$ sufficiently large, $ n^{\gamma'} \leq \mathbb{E}T(0,(n,n))- n g(1,1)  \leq n^{\gamma''}$. This establishes $\underline{\gamma} = 1/3$.
In the same way, for any $\chi''<1/3<\chi'$ and for $q=2,4$, \eqref{eq:convmoments} leads to  
\[
\lim_{n\rightarrow \infty} \frac{\|T(0,(n,n))-\mathbb{E}T(0,(n,n))\|_q}{n^{\chi'}} = 0 \quad
\lim_{n\rightarrow \infty} \frac{\|T(0,(n,n))-\mathbb{E}T(0,(n,n))\|_q}{n^{\chi''}} = \infty,
\]
establishing that $\underline{\chi}_2 = \overline{\chi}_4$.
\end{proof}

\begin{remark}
In both solvable cases, one can prove by direct asymptotic analysis that $\gamma = \chi = 1/3$. The importance of Proposition~\ref{propLPP} is that the assumption $\underline{\chi}_2 = \overline{\chi}_{2+\delta}$ is indeed valid. Proving $\chi=1/3$ for general distributions is an open problem.
\end{remark}

\subsection{Bin packing}
We consider $n$ objects with random sizes $X_1, X_2, \ldots, X_n$ having a common distribution on $[0,1]$, and an unlimited collection of bins, each of size $1$. For $m<n$, let $T_{m,n}:=T_{m,n}(X_{m+1},\ldots,X_n)$ be the minimum number of bins required to pack the objects $X_{m+1}, \ldots, X_n$. Then $T_n:=T_{0,n} \leq T_{0,m} + T_{m,n} $ is sub-additive and therefore $\lim_{n}\frac{T_n}{n} = \lim_n \frac{\E T_n}{n} = g.$ Because assumptions 1-4 hold (again 4 holds by independence), Theorem~\ref{thm:main1} applies.

\subsection{Directed polymers in random environment}
As in LPP, we consider the collection of directed paths with i.i.d. nonnegative weights on the vertices and define $T(\pi)$, the passage time of a directed path $\pi$, as in that context. Given $\beta>0$ we define the partition function from $u$ to $v$ at inverse temperature $\beta$ as   
 \begin{equation*}
  Z^{\beta}(u,v) = \sum_{\pi: u\to v} \exp(- \beta T(\pi))\ ,
  \end{equation*}
where the sum runs over all directed paths from $u$ to $v$. We set 
\begin{equation}\label{eq:freeenergy}
F(u,v) = -\frac{1}{\beta} \log \frac{Z^\beta (u,v)}{d^{\|v-u\|_1}}\ .
 \end{equation}
For a given $x \geq 0$ in $\mathbb{Z}^d$, the collection $\{X_{m,n}\}$ defined by $X_{m,n} = F(mx,nx)$ is sub-additive and satisfies assumptions 1-3. Because $X_{0,n},X_{n,2n}, \ldots$ is independent, the process satisfies association condition II and again we can apply Theorem~\ref{thm:main1}.

\subsection{Longest common subsequence} 

Consider $(X_i)_{i \in \mathbb N}$ and $(Y_i)_{i \in \mathbb N}$, two sequences of i.i.d. random variables taking values in a finite alphabet $\mathcal A = \{a_1, \ldots, a_r\}$. Let $LC_n$ be the length of the longest common subsequence of $X_1, \ldots, X_n$ and $Y_1, \ldots, Y_n$; that is, $LC_n$ is the largest $k$ such that there exists $1\leq i_1 < i_2 <\ldots < i_k\leq n$ and
$1\leq j_1 < j_2 <\ldots < j_k\leq n$ with $X_{i_l} = Y_{i_l}$, $l=1,\ldots, k$.

The study of the asymptotics of $LC_n$ has a long history starting with the pioneering work of Chv\'atal and Sankoff \cite{CSankoff} where it was shown that 

$$  \frac{ \E LC_n}{n} \rightarrow g_r\ .$$
The rate of convergence of the above sequence was first investigated by Alexander \cite{AlexanderLCC}, who proved the bound $\E LC_n \geq ng_r + C\sqrt{n \log n}$ for some $C>0$. 
When $X_1$ and $Y_1$ are both Bernoulli with parameter $p=1/2$, it is conjectured \cite{CSankoff} that $\Var LC_n=o(n^{2/3})$. On the other hand, when $p$ is small enough, it is known that there exist positive constants $c,C$ such that $ cn\leq \Var LC_n \leq Cn$ \cite{Henri, Steele}. Letting $X_{m,n}$ be the longest common subsequence of $X_{m+1},\ldots, X_n$ and $Y_{m+1},\ldots, Y_n$, it is straight-forward to check that $X_{m,n}$ satisfies 1-3. Again, as $X_{0,n},X_{n,2n}, \ldots$ is independent, the process also satisfies association condition II and we can apply Theorem~\ref{thm:main1}.

\subsection{First birth problem or the minimum of a branching random walk}

Let $(t_i)_{i \in \N}$ be a sequence of non-negative i.i.d. random variables. Consider a branching process where each individual $i$ lives for a certain amount of time $t_i$. The process starts with one individual at time $0$. At the time of its death, the individual produces $k$ offspring with probability $p_k$. After that, all offspring start independent copies of the original process.
 
Assume that the branching is supercritical: $\sum_k k p_k>1$. Let $B_n$ be the birth time of the first member of generation $n$ (with $B_0=0$). $B_n$ can also be interpreted as the minimum of a branching random walk where the step sizes are given by the collection $(t_i)_{i\in \N}$. To estimate $B_n$, let $B_{0,m}$ be the birth time of the first individual in generation $m$, and $B_{m,n}$ be the time needed for this individual to have an offspring in generation $n>m$. This process was initially investigated in \cite{BB1, BB2, Kingman1}. One has $B_n = B_{0,n} \leq B_{0,m} + B_{m,n}$, and therefore there exists a constant $g$ such that $\frac{B_n}{n} \rightarrow g$ a.s. The value of $g$ can be explicitly computed if one has finite exponential moments for the offspring distribution \cite{BB1}. 
It is known that for a wide range of branching random walks \cite{Adario} that $\E B_n - ng  = c \log n + O(1)$ for some constant $c$; thus $\underline{\gamma} = \overline{\gamma} = 0$. More information is available if the offspring distribution is in the boundary case (see \cite[Equation~(1.1)]{Aidekon} for a definition), where $g =0$ and $B_n - c\log n$ converges in distribution. 
 
Again, assumptions 1-3 hold, and since $B_{0,n}, B_{n,2n}, \ldots$ are independent for each $n$, we can apply Theorem~\ref{thm:main1}.

\section{Sketch of the proof of Theorem \ref{thm:main1}} \label{sec:sketch}

Because the proofs of the main theorems are somewhat technical, we will give here a sketch of the ideas.

We argue by contradiction, so assume first that $\chi < 1/2$ but $ \gamma < \chi$. For a random variable $X$, let $X' = X - \E X$. 

{\bf Step 1:} (Central limit theorem.) The first step is to use weak dependence to show that for a sequence $l_n \to \infty$ and $\epsilon>0$ 

$$\frac{ X_{0,l_n}' + \cdots + X_{(k_n-1)l_n,k_nl_n}' }{\sqrt{k_n \Var X_{0,l_n}}} \Longrightarrow N(0,1), $$ 
where $k_n \sim l_n^{\epsilon}$. This is the goal of Proposition \ref{prop: main_estimate}. (If the sequence $X_{0,n}, X_{n,2n},X_{2n,3n} \ldots$ is independent and identically distributed then this convergence follows by a routine application of Lyapunov's condition). Note that this convergence does not imply a central limit theorem for the sequence $(X_{0,n})$, as we are summing shifted copies above. However, it will imply a lower bound for the lower tail fluctuations of $X_{0,n}$ in the next step.

{\bf Step 2:} We define $$v(n) = \frac{\Var X_{0,n}}{n}.$$ In this part of the proof, we use the first step to derive a recursive inequality for $v(n)$ that leads to 
\begin{equation}\label{eq:sketch0}
\liminf_n \frac{v(k_nl_n)}{v(l_n)} \geq 1/2.
\end{equation}

This can be roughly justified as follows. We start by using sub-additivity and the fact that $\E X_{0,mn}~\geq~mng$  to obtain the inequality 
\begin{align}\label{eq:sketch3}
X'_{0,mn} &\leq \sum_{k=1}^m X'_{(k-1)n,kn} + m \mathbb{E}X_{0,n}- \mathbb{E}X_{0,mn} \nonumber \\
&\leq \sum_{k=1}^m X'_{(k-1)n,kn} + m\left[ \mathbb{E}X_{0,n}-ng\right]\ .
\end{align}
If we square both sides of the above inequality when the right term is nonpositive we obtain

\begin{equation}\label{eq:sketch1}
\Var X_{0,mn} \geq \E \bigg(\sum_{k=1}^m X'_{(k-1)n,kn} + m \left[ \mathbb{E}X_{0,n}-ng\right]  \bigg)_{-}^2. 
\end{equation}

Set  $m \sim n^{\epsilon}$ (or more precisely replace $m$ by $k_n$ and $n$ by $l_n$) in Step 1. Then, the first term is of order $\sqrt{m \Var{X_{0,n}}} \sim m^{1/2}n^{\chi}$  while the second term is at most $mn^{\gamma}$, which is smaller because

$$n^{\epsilon/2 + \chi} \gg n^{\epsilon + \gamma} \quad \text{ for } \epsilon \text{ small. }$$ 
Thus, dividing \eqref{eq:sketch1} by $mn$ we get 
$$\frac{\Var X_{0,mn}}{mn} \gtrsim \frac{1}{n} \E \bigg( \frac{1}{\sqrt{m}} \sum_{k=1}^m X_{(k-1)n,kn}'\bigg)^2_{-} \gtrsim \frac{1}{2} \frac{\Var X_{0,n}}{n},$$
which gives \eqref{eq:sketch0}. This step is done in Corollary \ref{corollary: main_cor}.

{\bf Step 3:} One can show (see Section \ref{sec: main_thms}) that Equation \eqref{eq:sketch0} implies $\chi \geq 1/2$, which is a contradiction as we earlier assumed that $\chi < 1/2$. In the case $\chi >1/2$, we use a different inequality to replace Step 2 to show that if $\gamma < \chi$ then $\chi \leq 1/2$. 

\section{Proofs}\label{sec:proof}

We start with the simplest of our proofs, the proof of Theorem~\ref{thm: shorter}.
\subsection{Proof of Theorem~\ref{thm: shorter}}\label{sec:shorter}
Recall the notation $X_{0,n}' = X_{0,n} - \mathbb{E}X_{0,n}$.  
The main bound we will use here, for $m,n \geq 1$, is \eqref{eq:sketch3} which we rewrite for convenience of the reader:
\begin{align*}
X'_{0,mn} &\leq \sum_{k=1}^m X'_{(k-1)n,kn} + m \mathbb{E}X_{0,n}- \mathbb{E}X_{0,mn} \\
&\leq \sum_{k=1}^m X'_{(k-1)n,kn} + m\left[ \mathbb{E}X_{0,n}-ng\right]\ .
\end{align*}
From this inequality we obtain two others. Set $X_+ = X \mathbf{1}_{\{X \geq 0\}}$ and $X_- = X-X_+$.

The first inequality will help when $\chi>1/2$:
\begin{equation}\label{eq: upper_bound}
\mathbb{E}(X_{0,mn}')_+^2 \leq  \Var \sum_{k=1}^m X_{(k-1)n,kn} + m^2 \left[ \mathbb{E}X_{0,n}-ng \right]^2\ . 
\end{equation}
We will use the second when $\chi<1/2$:
\begin{equation}\label{eq: lower_bound}
\Var X_{0,mn} \geq \mathbb{E}\left( \sum_{k=1}^m X'_{(k-1)n,kn} + m\left[ \mathbb{E}X_{0,n}-ng\right] \right)_-^2\ .
\end{equation}

We will also make liberal use of the following variant of the Paley-Zygmund inequality.
\begin{lem}\label{lem: PZ}
If $X$ has mean zero with $\|X\|_2\in (0,\infty)$ then for $\theta \in (0,1)$ and $\delta>0$,
\[
\mathbb{P}\left( |X| \geq \theta \sqrt{\Var X} \right) \geq (1-\theta^2)^{1+2/\delta} \left( \frac{\|X\|_2}{\|X\|_{2+\delta}} \right)^{2+4/\delta}\ .
\]
\end{lem}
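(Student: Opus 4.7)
The plan is to apply the standard Paley--Zygmund truncation argument to the nonnegative random variable $Z = X^2$, using H\"older's inequality with a carefully chosen exponent so that the higher moment appearing is $\|X\|_{2+\delta}$ rather than $\|X\|_4$. The event $\{|X| \geq \theta \sqrt{\Var X}\}$ coincides with $\{Z \geq \theta^2 \mathbb{E}Z\}$ (recall $\Var X = \mathbb{E}Z$ since $X$ has mean zero), so it suffices to bound $\mathbb{P}(Z \geq \theta^2 \mathbb{E}Z)$ from below.

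First I would split
\[
\mathbb{E}Z = \mathbb{E}\bigl[Z \mathbf{1}_{\{Z < \theta^2 \mathbb{E}Z\}}\bigr] + \mathbb{E}\bigl[Z \mathbf{1}_{\{Z \geq \theta^2 \mathbb{E}Z\}}\bigr]
\]
and bound the first term trivially by $\theta^2 \mathbb{E}Z$. Rearranging gives $(1-\theta^2)\mathbb{E}Z \leq \mathbb{E}[Z \mathbf{1}_A]$, where $A = \{Z \geq \theta^2 \mathbb{E}Z\}$.

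Next I would apply H\"older's inequality to $\mathbb{E}[Z \mathbf{1}_A]$ with the conjugate pair $p = (2+\delta)/2$ and $q = (2+\delta)/\delta$. The key bookkeeping is that
\[
(\mathbb{E}Z^{p})^{1/p} = (\mathbb{E}|X|^{2+\delta})^{2/(2+\delta)} = \|X\|_{2+\delta}^{2},
\]
so H\"older yields $(1-\theta^2)\|X\|_2^2 \leq \|X\|_{2+\delta}^{2}\, \mathbb{P}(A)^{1/q}$. Solving for $\mathbb{P}(A)$ produces
\[
\mathbb{P}(A) \geq (1-\theta^2)^{q}\Paren{\tfrac{\|X\|_2}{\|X\|_{2+\delta}}}^{2q},
\]
and substituting $q = 1 + 2/\delta$ (so that $2q = 2 + 4/\delta$) gives exactly the claimed inequality.

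There is no real obstacle here: the argument is a one-line Paley--Zygmund estimate, and the only subtlety is matching the H\"older exponents to convert the $L^4$ bound in the classical form into the $L^{2+\delta}$ bound needed in the statement. The nondegeneracy assumption $\|X\|_2 \in (0,\infty)$ is used only to ensure that $\mathbb{E}Z > 0$ so that the event $A$ and the ratio $\|X\|_2/\|X\|_{2+\delta}$ are meaningful; the case $\|X\|_{2+\delta} = \infty$ makes the right-hand side zero and the inequality trivial.
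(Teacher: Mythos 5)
Your proof is correct and is essentially the paper's argument: the paper likewise splits $\mathbb{E}[X^2]$ over the event $\{X^2 \geq \theta^2 \mathbb{E}X^2\}$ and its complement, applies H\"older with $p = (2+\delta)/2$, $q = 1+2/\delta$, and uses $\Var X = \mathbb{E}X^2$ to land on the stated bound. The exponent bookkeeping ($2q = 2+4/\delta$) and the handling of the degenerate case $\|X\|_{2+\delta} = \infty$ both match what the paper does.
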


\begin{proof}
We may assume that $\|X\|_{2+\delta} < \infty$. If $X$ if a nonnegative random variable with $\|X\|_p<\infty$ and $\theta \in (0,1)$,
\[
\mathbb{E}X = \mathbb{E}X\mathbf{1}_{\{X < \theta \mathbb{E}X\}} + \mathbb{E}X\mathbf{1}_{\{X \geq \theta \mathbb{E}X\}} \leq \theta\mathbb{E}X + \|X\|_p \left(\mathbb{P}(X \geq \theta \mathbb{E}X)\right)^{1/q}\ ,
\]
where $p,q > 1$ satisfy $p^{-1}+q^{-1}=1$. For $\|X\|_p>0$, we obtain
\[
\mathbb{P}(X \geq \theta \mathbb{E}X) \geq \left( (1-\theta) \frac{\mathbb{E}X}{\|X\|_p} \right)^q\ .
\] 
Put $2p=2+\delta$, so that 
\[
q = \frac{p}{p-1} = \frac{1+\delta/2}{\delta/2} = 1+ \frac{2}{\delta}\ .
\]
Replace by $X^2$:
\[
\mathbb{P}(|X| \geq \theta \|X\|_2) \geq \left( (1-\theta^2) \frac{\mathbb{E}X^2}{\|X^2\|_p} \right)^q = (1-\theta^2)^{1+2/\delta} \left( \frac{\|X\|_2}{\|X\|_{2+\delta}} \right)^{2+4/\delta}\ .
\]
\end{proof}

\subsubsection{The case $\chi>1/2$}
Assume 1-3, (M1) and $\mathbb{E}|X_{0,n}|^{2+\delta}<\infty$ for some $\delta>0$ and all $n$. Inequality \eqref{eq: upper_bound} then implies
\[
\mathbb{E}(X_{0,mn}')_+^2 \leq \CC_1 m \Var X_{0,n} + m^2 \left[ \mathbb{E}X_{0,n} - ng \right]^2\ .
\]
Because 
\[
0 = \mathbb{E}X_{0,mn}' = \mathbb{E}(X_{0,mn}')_+ + \mathbb{E}(X_{0,mn}')_-\ ,
\]
our inequality becomes
\begin{align}
\mathbb{E}|X_{0,mn}'| &= 2\mathbb{E}(X_{0,mn}')_+ \nonumber \\
&\leq \sqrt{4\CC_1 m \Var X_{0,n} +4m^2 \left[ \mathbb{E}X_{0,n} - ng \right]^2} \label{eq: cheez_wiz}\ .
\end{align}

Apply Lemma~\ref{lem: PZ} to $X = X_{0,mn}'$ with $\theta=1/2$ and combine with \eqref{eq: cheez_wiz}:
\begin{align}
\frac{1}{2} \cdot \left( \frac{3}{4} \right)^{1+2/\delta} \left( \frac{\|X_{0,mn}'\|_2}{\|X_{0,mn}'\|_{2+\delta}} \right)^{2+4/\delta} \|X_{0,mn}'\|_2 &\leq \mathbb{E}|X_{0,mn}'| \nonumber \\
&\leq \sqrt{ 4\CC_1m\Var X_{0,n} + 4m^2\left[ \mathbb{E}X_{0,n}-ng \right]^2} \label{eq: cheez_wiz_2}\ ,
\end{align}
so long as $\Var X_{0,mn} > 0$.

Now assume that $\chi:= \underline{\chi}_2 = \overline{\chi}_{2+\delta} \in (1/2,\infty)$ but that $\underline{\gamma} < \chi$. Let $l_n \to \infty$ be such that
\[
\frac{\log (\mathbb{E}X_{0,l_n} - l_ng)}{\log l_n} \to \underline{\gamma}
\]
and for each $n$, let $k_n \geq 1$. Considering the left side of \eqref{eq: cheez_wiz_2}:
\[
 \lim_n\frac{\log \left[ \frac{1}{2} \left( \frac{3}{4}\right)^{1+2/\delta} \left( \frac{\|X'_{0,k_nl_n}\|_2}{\|X_{0,k_nl_n}'\|_{2+\delta}}\right)^{2+4/\delta} \|X_{0,k_nl_n}'\|_2\right]}{\log k_nl_n} = \chi\ .
\]
Using the right side of \eqref{eq: cheez_wiz_2}, then,
\begin{align*}
2\chi &\leq \liminf_n \frac{\log \left( 4\CC_1k_n \Var X_{0,l_n} + 4k_n^2 \left[\mathbb{E}X_{0,l_n} - l_ng\right]^2 \right)}{\log k_nl_n} \\
&\leq \liminf_n \max \left\{ \frac{\log k_n \Var X_{0,l_n}}{\log k_nl_n}, \frac{\log k_n^2 \left[\mathbb{E}X_{0,l_n}-l_ng\right]^2}{\log k_nl_n} \right\}\ .
\end{align*}
Choose $k_n = \lfloor l_n^\epsilon \rfloor$ for $\epsilon>0$. Then
\begin{align*}
\frac{\log k_n \Var X_{0,l_n}}{\log k_nl_n} &= \frac{\log k_n}{\log k_n+\log l_n} + 2\frac{\log l_n}{\log k_n+\log l_n} \cdot \frac{\log \|X_{0,l_n}'\|_2}{\log l_n} \\
&\to \frac{\epsilon}{1+\epsilon} + 2\chi \frac{1}{1+\epsilon}
\end{align*}
and
\[
\frac{\log k_n^2 \left[ \mathbb{E}X_{0,l_n}-l_ng\right]^2}{\log k_nl_n} \to 2\frac{\epsilon}{1+\epsilon} +2\underline{\gamma} \frac{1}{1+\epsilon}\ .
\]
Therefore
\begin{equation}\label{eq: pizzeria}
2\chi \leq \max\left\{ \frac{\epsilon}{1+\epsilon} + 2\chi\frac{1}{1+\epsilon}, 2\frac{\epsilon}{1+\epsilon} + 2\underline{\gamma} \frac{1}{1+\epsilon} \right\}\ .
\end{equation}
or
\[
2\chi(1+\epsilon) \leq \max\{\epsilon+2\chi, 2\epsilon + 2\underline{\gamma}\}\ .
\]
For $\epsilon$ small, the dominant term on the right is the first, so $\chi \leq 1/2$, a contradiction. Therefore $\underline{\gamma} \geq \chi$.


\subsubsection{The case $\chi<1/2$}
Assume 1-3, (M2), and $\|X_{0,n}\|_{2+\delta}<\infty$ for some $\delta>0$ and all $n$. We begin with a simple inequality: If $a \in \mathbb{R}$ and $b \geq 0$ then
\begin{align*}
(a+b)_-^2 = (a+b)^2 \mathbf{1}_{\{a \leq -b\}} \geq (a+b)^2\mathbf{1}_{\{a \leq -2b\}} &\geq \frac{1}{4} a^2 \mathbf{1}_{\{a \leq -2b\}} \\
&= \frac{1}{4} \left[ a^2 \mathbf{1}_{\{a \leq 0\}} - a^2 \mathbf{1}_{\{a \in (-2b,0]\}}\right] \\
&\geq \frac{1}{4}a_-^2 - b^2\ .
\end{align*}
Use this in inequality \eqref{eq: lower_bound} for
\begin{align*}
\Var X_{0,mn} &\geq \frac{1}{4} \mathbb{E}\left(\sum_{k=1}^m X'_{(k-1)n,kn} \right)_-^2 - m^2 \left[ \mathbb{E}X_{0,n}-ng \right]^2 \\
&\geq \frac{1}{16} \left( \mathbb{E}\left|\sum_{k=1}^m X'_{(k-1)n,kn} \right| \right)^2 - m^2 \left[ \mathbb{E}X_{0,n}-ng \right]^2\ .
\end{align*}
By Lemma~\ref{lem: PZ}, if we write 
\[
S_{m,n} = \sum_{k=1}^{m} X'_{(k-1)n,kn}\ ,
\]
then as long as $\Var S_{m,n} > 0$,
\begin{equation}\label{eq: other}
\Var X_{0,mn} \geq \frac{1}{64} \left(\frac{3}{4} \right)^{2+4/\delta} \left( \frac{\|S_{m,n}\|_2}{\|S_{m,n}\|_{2+\delta}}\right)^{4+8/\delta} \|S_{m,n}\|_2^2 - m^2 \left[ \mathbb{E}X_{0,n} - ng\right]^2\ .
\end{equation}

Now assume that $\chi:= \underline{\chi}_2 = \overline{\chi}_{2+\delta} <1/2$ but that $\underline{\gamma} < \chi$. Choose sequences $(l_n)$ and $(k_n)$ such that $k_n \geq 1$ and 
\[
\frac{\log(\mathbb{E}X_{0,l_n} - l_ng)}{\log l_n} \to \underline{\gamma}\ .
\]
Choose $m=k_n$ and $n=l_n$ in \eqref{eq: other}, and use (M2):
\[
\Var X_{0,l_nk_n} + k_n^2 \left[ \mathbb{E}X_{0,l_n} - l_ng \right]^2 \geq \frac{\CC_2}{64} \left( \frac{3}{4} \right)^{2+4/\delta} \left( \frac{(\CC_2)^{1/2} \|X_{0,l_n}'\|_2}{(\CC_3)^{\frac{1}{2+\delta}}\|X_{0,l_n}'\|_{2+\delta}} \right)^{4+8/\delta} k_n \Var X_{0,l_n}\ .
\]
Note that if $k_n = \lfloor l_n^\epsilon \rfloor$ for some $\epsilon>0$, then
\[
\frac{\log \Var X_{0,l_nk_n}}{\log l_nk_n} \to 2\chi ,
\]
\[
\frac{\log k_n^2 \left[ \mathbb{E}X_{0,l_n}-l_ng \right]^2}{\log l_nk_n} \to 2\frac{\epsilon}{1+\epsilon} + 2\underline{\gamma}\frac{1}{1+\epsilon}
\]
and
\[
\frac{\log\left(  \frac{\CC_2}{64} \left( \frac{3}{4} \right)^{2+4/\delta} \left( \frac{(\CC_2)^{1/2} \|X_{0,l_n}'\|_2}{(\CC_3)^{\frac{1}{2+\delta}}\|X_{0,l_n}'\|_{2+\delta}} \right)^{4+8/\delta} k_n \Var X_{0,l_n}\right)}{\log l_nk_n} \to\frac{\epsilon}{1+\epsilon} + 2\chi \frac{1}{1+\epsilon}\ .
\]
So we obtain
\begin{equation}\label{eq: pizzeria2}
\frac{\epsilon}{1+\epsilon} + 2\chi \frac{1}{1+\epsilon} \leq \max\left\{ 2\chi, 2\frac{\epsilon}{1+\epsilon} + 2\underline{\gamma}\frac{1}{1+\epsilon} \right\}\ .
\end{equation}
If $\epsilon$ is small, the dominant term on the right is $2\chi$, so for such $\epsilon$,
\[
\epsilon + 2\chi \leq 2\chi (1+\epsilon)\ .
\]
This means $\chi \geq 1/2$, a contradiction.

Last assume that $\chi:= \overline{\chi}_2 = \overline{\chi}_{2+\delta}<1/2$ for some $\delta>0$ but that $\overline{\gamma} < \chi$. Pick $l_n \to \infty$ such that
\[
 \frac{\log \Var X_{0,l_n}}{\log l_n} \to 2\chi
\]
and, given $\epsilon>0$, set $k_n = \lfloor l_n^\epsilon \rfloor$. Arguing as above, we obtain \eqref{eq: pizzeria2}, but with $\overline{\gamma}$ replacing $\underline{\gamma}$. Using $\overline{\gamma} < \chi$, we find $\chi \geq 1/2$, a contradiction, so $\overline{\gamma} \geq \chi$.

%

\subsection{Application of central limit theorems}

Our first goal  in this section is to derive a bound on the fluctuations of sums of terms $X_{(j-1)l_n,jl_n}$ for suitably chosen $l_n$. Under our asymptotic independence assumptions, we can apply a central limit theorem. For the statement below, denote by $\Phi$ the distribution function of a standard normal variable, and recall the definition of $\CC_5$ from item 2 of mixing condition I.

\begin{prop}\label{prop: main_estimate}
Given $\delta>0$, there exists $\Delta = \Delta(\delta)>0$ such that for any sequences of positive integers $(l_n)$, $(k_n)$ with $l_n \to \infty$, $\|X_{0,l_n}'\|_2>0$, and
\begin{equation}\label{eq: k_n_condition}
\frac{\|X_{0,l_n}'\|_{2+\delta}}{\|X_{0,l_n}'\|_2} = o(k_n^{\Delta})\ ,
\end{equation}
one has, for all $y \leq 0$,
\[
\liminf_n \mathbb{P}\left( X_{0,l_n}' + \cdots + X_{(k_n-1)l_n,k_nl_n}' \leq y \|X_{0,l_n}'\|_2\sqrt{k_n}\right) \geq C(y)\ .
\]
Under mixing condition I, $C(y) = \Phi(y/\sqrt{\CC_5})$ and under association condition II, $C(y) = \Phi(y)$. Last, under association condition II one may take $\Delta = \frac{1}{2} - \frac{1}{2+\delta}$.
\end{prop}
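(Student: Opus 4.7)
I would prove a standard central limit theorem for $S_n := X'_{0,l_n} + \cdots + X'_{(k_n-1)l_n, k_nl_n}$ normalized by $\sqrt{\Var S_n}$, which is the natural normalization, and then convert it to the normalization $\sqrt{k_n}\,\|X'_{0,l_n}\|_2$ in the statement by using the lower bound on $\Var S_n$ available from each of the two structural hypotheses. Write $\sigma_n := \|X'_{0,l_n}\|_2$. Under mixing condition I, items 1--2 give $\CC_5 k_n \sigma_n^2 \leq \Var S_n \leq \CC_4 k_n \sigma_n^2$. Under association condition II, positive association forces each pairwise covariance to be nonnegative, so $\Var S_n \geq k_n \sigma_n^2$, while the decay \eqref{eq:covcond} bounds $\Var S_n$ from above.

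\textbf{Lyapunov ratio.} By stationarity each block has the law of $X'_{0,l_n}$, so
\[
L_n \;:=\; \frac{\sum_{j=1}^{k_n}\mathbb{E}|X'_{(j-1)l_n,jl_n}|^{2+\delta}}{(\Var S_n)^{1+\delta/2}} \;\leq\; \frac{k_n\, \|X'_{0,l_n}\|_{2+\delta}^{2+\delta}}{(\CC_5 k_n \sigma_n^2)^{1+\delta/2}} \;=\; \CC_5^{-1-\delta/2}\, k_n^{-\delta/2} \Bigl(\tfrac{\|X'_{0,l_n}\|_{2+\delta}}{\sigma_n}\Bigr)^{2+\delta}.
\]
Setting $\Delta := \tfrac{1}{2}-\tfrac{1}{2+\delta} = \tfrac{\delta}{2(2+\delta)}$, assumption \eqref{eq: k_n_condition} forces $L_n \to 0$. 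Under association condition II ($\CC_5=1$) this is the $\Delta$ advertised in the proposition; under mixing condition I one may need to shrink $\Delta$ slightly to absorb cross-covariance terms in the chosen CLT.

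\textbf{Applying the CLT and rescaling.} Under mixing condition I, item 3 provides polynomial mixing $\alpha_{l_n}(x) \leq \CC_6 x^{-\kappa}$ uniformly in $n$, so combined with $L_n \to 0$ a strong-mixing triangular-array CLT (e.g.\ of Ibragimov-Linnik, Utev, or Peligrad type) delivers $S_n/\sqrt{\Var S_n} \Rightarrow \mathcal{N}(0,1)$. Under association condition II, the Newman-type CLT of \cite{CG} for positively associated triangular arrays applies, using the Lyapunov bound together with \eqref{eq:covcond}. For $y \leq 0$, the variance lower bound gives $y \sqrt{\Var S_n}/\sqrt{\CC_5} \leq y \sqrt{k_n}\,\sigma_n$, whence
\[
\mathbb{P}\bigl(S_n \leq y \sqrt{k_n}\,\sigma_n\bigr) \;\geq\; \mathbb{P}\bigl(S_n/\sqrt{\Var S_n} \leq y/\sqrt{\CC_5}\bigr) \;\longrightarrow\; \Phi(y/\sqrt{\CC_5}).
\]
The association case is identical with $\CC_5=1$.

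\textbf{Main obstacle.} The delicate point is the CLT under mixing condition I: the blocks $\{X'_{(j-1)l_n,jl_n}\}_{j=1}^{k_n}$ form a genuine triangular array whose mixing rate is only uniform in the row index, and the row length $k_n$ may grow very slowly compared to $l_n$. One must choose a CLT whose hypotheses mesh cleanly with the polynomial rate of item 3 and the Lyapunov bound above; trading the exponent $\kappa$ against $\delta$ to obtain some positive $\Delta(\delta)$ is where the argument is most sensitive. The association case is easier because the Cox-Grimmett framework of \cite{CG} is tailor-made for exactly the structure at hand, which is why the sharper value $\Delta = \tfrac{1}{2}-\tfrac{1}{2+\delta}$ is attainable there.
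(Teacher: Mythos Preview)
Your proposal is correct and follows essentially the same route as the paper: normalize the blocks to unit variance, verify a Lyapunov-type condition yielding the exponent $\Delta$, apply a triangular-array CLT under each of the two structural hypotheses, and then convert from the normalization $\sqrt{\Var S_n}$ to $\sqrt{k_n}\,\sigma_n$ via the variance lower bounds. One practical caveat: for the mixing case the paper's authors explicitly state they were unable to locate a suitable strong-mixing triangular-array CLT in the literature and instead prove one from scratch (via a Bernstein blocking argument) in an appendix, so your appeal to off-the-shelf results of Ibragimov--Linnik, Utev, or Peligrad type may not go through directly and you should anticipate having to supply this yourself; the association case, as you say, is handled by a minor adaptation of the Cox--Grimmett CLT, which the paper also records in an appendix.
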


\begin{proof}
We will apply central limit theorems for weakly dependent variables. We will handle assumption 4 differently depending on whether or not condition I holds. So take sequences $(l_n)$ and $(k_n)$ as in the statement of the proposition. This implies, in particular, that by removing finitely many terms,
\begin{equation}\label{eq: moment_properties}
\|X_{0,l_n}'\|_{2+\delta} <\infty \text{ and } \|X_{0,l_n}'\|_2 >0 \text{ for all }n
\end{equation}
and, for any given $\Delta>0$, by Jensen's inequality,
\begin{equation}\label{eq: k_n}
k_n \to \infty \text{ as } n \to \infty\ .
\end{equation}

\medskip
\noindent
{\bf Under condition I.} In this case, we will use a central limit theorem for triangular arrays under strong mixing. We were unable to find such a result in the literature, so we give a proof in Appendix~\ref{sec: mixing_clt}.
\begin{lem}\label{lem: alpha_CLT}
Let $\{\eta_i^{(n)} : i,n \geq 1\}$ be an array of random variables such that for each $n$, $(\eta_i^{(n)})$ is a stationary sequence. Assume that there exist $\CC_7, \CC_8$ and $\delta>0$ such that, for each $n$,
\begin{enumerate}
\item $\mathbb{E}\eta_1^{(n)}=0$ and $\mathbb{E}\left( \eta_1^{(n)} \right)^2 = 1$;
\item $\CC_7 b \leq \mathbb{E} \left| \sum_{i=1}^b \eta_i^{(n)} \right|^2$ for all $b$;
\item $\mathbb{E}\left| \sum_{i \in \mathcal{B}} \eta_i^{(n)} \right|^2 \leq \CC_8 b$ for all $b$ and sets $\mathcal{B}$ of indices such that $\#\mathcal{B} = b$;
\item $\mathbb{E}|\eta_1^{(n)}|^{2+\delta} < \infty$;
\item for each $\kappa>0$, there exists $\CC_9$ such that, for all $n$ and $x$, $\alpha_n(x) \leq \CC_9x^{-\kappa}$.
\end{enumerate}
Then, letting $\sigma(n)^2 = \Var \sum_{i=1}^{r(n)} \eta_i^{(n)}$, there exists $\Delta>0$ such that
\begin{equation}\label{eq:cltsum}
\frac{1}{\sigma(n)} \sum_{i=1}^{r(n)} \eta_i^{(n)} \Rightarrow N(0,1)
\end{equation}
for any increasing $r(\cdot)$ such that $\|\eta_1^{(n)}\|_{2+\delta}/r(n)^\Delta \to 0$ as $n \to \infty$.
\end{lem}
In the lemma, the mixing coefficient $\alpha_n$ is the one associated to the $n$-th row of the array $(\eta_i^{(n)})$. That is, 
\[
\alpha_n(x) = \sup_{k \geq 1} \alpha\left( \sigma(\eta_1^{(n)}, \ldots, \eta_k^{(n)}), \sigma(\eta_{k+x}^{(n)}, \ldots) \right)\ ,
\]
where for two sigma-algebras $\Sigma_1$ and $\Sigma_2$, $\alpha(\Sigma_1,\Sigma_2)$ is defined as $\sup_{A \in \Sigma_1, B \in \Sigma_2} |\mathbb{P}(A \cap B)-\mathbb{P}(A)\mathbb{P}(B)|$.

We apply the lemma using $\eta_i^{(n)} = \frac{X_{(i-1)l_n, il_n}'}{\|X_{0,l_n}'\|_2}$. Then conditions 1 and 4 hold by \eqref{eq: moment_properties}, and 2, 3 and 5 hold by mixing condition I. Setting $r(n) = k_n$, then
\[
\|\eta_1^{(n)}\|_{2+\delta}/r(n)^\Delta = \frac{\|X_{0,l_n}'\|_{2+\delta}}{\|X_{0,l_n}'\|_2 k_n^\Delta}\ .
\]
Assuming this converges to 0, then the central limit theorem implies that for each $y \in \mathbb{R}$,
\[
\lim_n \mathbb{P}\left( X_{0,l_n}' + \cdots + X_{(k_n-1)l_n,k_nl_n}' \leq y\hat \sigma(n) \right) = \Phi(y)\ ,
\]
where $\hat \sigma(n)^2 = \Var (X_{0,l_n} + \cdots + X_{(k_n-1)l_n,k_nl_n})$. By part 2 of condition I, $\hat \sigma(n)^2 \geq \CC_5 k_n \|X_{0,l_n}'\|_2^2$, so if $y \leq 0$,
\[
\liminf_n \mathbb{P}\left( X_{0,l_n}' + \cdots + X_{(k_n-1)l_n,k_nl_n}' \leq y \sqrt{k_n} \|X_{0,l_n}'\|_2 \right) \geq \Phi\left( y/\sqrt{\CC_5} \right)\ .
\]

\medskip
\noindent
{\bf Under condition II.} Under positive association, we will use the following variant of the central limit theorem of Cox-Grimmett \cite[Theorem~1.2]{CG}. The proof is nearly identical to that in \cite{CG} but, for completeness, we outline it in Appendix~\ref{appendix: CG}. 

\begin{lem}\label{lem: CG}
Let $\{\eta_i^{(n)} : i,n \geq 1\}$ be an array of random variables such that for each $n$, $(\eta_i^{(n)})$ is a stationary and positively associated sequence. Assume further that for some sequence $(r(n))$ of positive integers, the following two conditions are met.
\begin{enumerate}
\item For some $\delta>0$ and every $i,n$,
\[
\mathbb{E}\eta_i^{(n)}=0,~ \|\eta_i^{(n)}\|_2=1 \text{ and } \|\eta_i^{(n)}\|_{2+\delta} < \infty\ .
\]
\item There is a function $\hat u : \{0, 1, 2, \ldots\} \to \mathbb{R}$ such that $\hat u(r) \to 0$ as $r \to \infty$ and for all $n \geq 1$, $i=1, \ldots, r(n)$ and $r \geq 0$,
\[
\sum_{j=1, \ldots, r(n) : |i-j| \geq r} \Cov (\eta_j^{(n)},\eta_i^{(n)}) \leq \hat u(r)\ .
\]
\end{enumerate}
If
\begin{equation}\label{eq: r_n_condition}
\|\eta_1^{(n)}\|_{2+\delta} = o\left( r(n)^{\frac{1}{2} - \frac{1}{2+\delta}} \right) \text{ as } n \to \infty
\end{equation}
then, setting $\sigma(n)^2 = \Var \sum_{i=1}^{r(n)} \eta_i^{(n)}$,
\[
\frac{1}{\sigma(n)} \sum_{i=1}^{r(n)} \eta_i^{(n)} \Rightarrow N(0,1)\ .
\]
\end{lem}

Apply Lemma~\ref{lem: CG} with $\eta_i^{(n)} = \frac{X_{(i-1)l_n,il_n}'}{\|X_{0,l_n}'\|_2}$, $r(n) = k_n$ and $\hat u = u$ from association condition II. Then \eqref{eq: moment_properties} implies item 1, and item 2 follows from \eqref{eq:covcond}. Last, \eqref{eq: r_n_condition} holds because of condition \eqref{eq: k_n_condition} on $k_n$. So for each $y \in \mathbb{R}$,
\[
\lim_n \mathbb{P}\left( X_{0,l_n} + \cdots + X_{(k_n-1)l_n,k_nl_n} \leq y \sigma(n) \right) = \Phi(y)\ .
\]
Due to positive association, $\sigma(n)^2 \geq k_n \|X_{0,l_n}'\|_2^2$, so this implies the proposition.
\end{proof}

\subsection{Iterative bound}


As a consequence of the last section, we can state a relation between growth of the mean of our process and the variance. Let $\Delta$ be from Proposition~\ref{prop: main_estimate}.
\begin{cor}\label{corollary: main_cor}
Let $\delta>0$. For any sequences of integers $(l_n)$, $(k_n)$ with $l_n \to \infty$,
\[
\frac{\|X_{0,l_n}'\|_{2+\delta}}{\|X_{0,l_n}'\|_2} = o(k_n^\Delta)\ , 
\]
and 
\[
k_n = o\left( \frac{\Var X_{0,l_n}}{(\mathbb{E}X_{0,l_n} - l_n g)^2} \right) \ ,
\]
one has
\[
\liminf_n \frac{\Var X_{0,k_nl_n}}{k_n \Var X_{0,l_n}} \geq \Lambda^{-1}\ ,
\]
where $\Lambda$ is given above the statement of Theorem~\ref{thm:main1}.
\end{cor}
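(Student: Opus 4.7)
\medskip

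\noindent\textbf{Proof proposal for Corollary~\ref{corollary: main_cor}.}

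The plan is to combine the deterministic inequality \eqref{eq: lower_bound} with the quantitative central limit bound of Proposition~\ref{prop: main_estimate}. Specialize \eqref{eq: lower_bound} to $m = k_n$ and $n = l_n$: writing
\[
S_n \defin \sum_{k=1}^{k_n} X'_{(k-1)l_n,kl_n} + a_n, \qquad a_n \defin k_n\bigl(\mathbb{E}X_{0,l_n} - l_n g\bigr), \qquad \sigma_n \defin \sqrt{k_n \Var X_{0,l_n}},
\]
one gets $\Var X_{0,k_n l_n} \geq \mathbb{E}(S_n)_-^2$. Since $g = \inf_n \mathbb{E}X_{0,n}/n$, one has $a_n \geq 0$, and the hypothesis $k_n = o\bigl(\Var X_{0,l_n}/(\mathbb{E}X_{0,l_n} - l_n g)^2\bigr)$ translates exactly to $a_n/\sigma_n \to 0$. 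So the deterministic shift $a_n$ is negligible compared to the typical fluctuation scale $\sigma_n$ of the zero-mean sum.

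Next I would transfer the CLT-style lower bound for the sum $\sum_k X'_{(k-1)l_n,kl_n}$ over to $S_n$. The moment hypothesis on $\|X_{0,l_n}'\|_{2+\delta}/\|X_{0,l_n}'\|_2$ is precisely \eqref{eq: k_n_condition}, so Proposition~\ref{prop: main_estimate} is applicable and gives, for every $y \leq 0$,
\[
\liminf_n \Pro\Bigl(\sum_{k=1}^{k_n} X'_{(k-1)l_n,kl_n} \leq y\sigma_n\Bigr) \geq C(y),
\]
with $C(y) = \Phi(y/\sqrt{\CC_5})$ under Condition~I and $C(y) = \Phi(y)$ under Condition~II. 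Now for any $y < 0$ and any small $\eta \in (0,|y|)$, eventually $a_n/\sigma_n < \eta$, so
\[
\Pro(S_n \leq y\sigma_n) = \Pro\Bigl(\sum_k X'_{(k-1)l_n,kl_n} \leq (y - a_n/\sigma_n)\sigma_n\Bigr) \geq \Pro\Bigl(\sum_k X'_{(k-1)l_n,kl_n} \leq (y-\eta)\sigma_n\Bigr),
\]
since $y - a_n/\sigma_n \geq y-\eta$. Taking $\liminf$ and then $\eta \downarrow 0$ using continuity of $\Phi$ yields $\liminf_n \Pro(S_n \leq y\sigma_n) \geq C(y)$ for all $y < 0$.

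Finally I would pass from tail probabilities to the second negative moment via the layer-cake identity
\[
\mathbb{E}(S_n)_-^2 = 2\sigma_n^2 \int_0^\infty s\, \Pro(S_n \leq -s\sigma_n)\,\md s,
\]
and apply Fatou's lemma:
\[
\liminf_n \frac{\mathbb{E}(S_n)_-^2}{\sigma_n^2} \geq 2\int_0^\infty s\, C(-s)\,\md s.
\]
Under Condition~II this integral is $\mathbb{E}(N)_-^2 = 1/2$ for $N \sim \mathcal{N}(0,1)$, matching $\Lambda^{-1} = 1/2$. Under Condition~I the change of variables $u = s/\sqrt{\CC_5}$ turns it into $\CC_5/2 = \Lambda^{-1}$. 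Combining with $\Var X_{0,k_n l_n} \geq \mathbb{E}(S_n)_-^2$ and $\sigma_n^2 = k_n \Var X_{0,l_n}$ delivers the claimed bound.

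The main subtleties I anticipate are: (i) verifying that the CLT of Proposition~\ref{prop: main_estimate} composes cleanly with the small deterministic perturbation $a_n$ without degrading the constant $C(y)$, and (ii) justifying the Fatou step, which only works because $C(y)$ is a bona fide limit of tail probabilities and the integrand $s\,\Pro(S_n \leq -s\sigma_n)$ is non-negative. No additional integrability of $S_n^2/\sigma_n^2$ is needed for a \emph{lower} bound on the $\liminf$, which is precisely what the corollary asks for.
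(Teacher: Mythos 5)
Your proof is correct and follows essentially the same route as the paper's: apply the deterministic bound \eqref{eq: lower_bound}, feed the normalized sum into Proposition~\ref{prop: main_estimate}, handle the deterministic shift $c_n\to 0$ (your $a_n/\sigma_n \to 0$), and pass to the negative second moment via the layer-cake identity and Fatou. The only cosmetic difference is that the paper normalizes by $\sqrt{k_n}\,\|X_{0,l_n}'\|_2$ before applying Fatou, while you carry $\sigma_n^2$ alongside; the substance is identical.
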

\begin{proof}
Again by removing finitely many terms, \eqref{eq: moment_properties} holds and $k_n \to \infty$. Use \eqref{eq: lower_bound} with $m=k_n$ and $n=l_n$ for
\[
\Var A_n \geq \mathbb{E}(B_n+c_n)_-^2\ ,
\]
where
\[
A_n = \frac{X_{0,k_nl_n}'}{\sqrt{k_n} \|X_{0,l_n}'\|_2},~ B_n = \frac{1}{\sqrt{k_n} \|X_{0,l_n}'\|_2} \sum_{k=1}^{k_n} X_{(k-1)l_n,kl_n}',~ c_n = \frac{\sqrt{k_n}}{\|X_{0,l_n}'\|_2} [\mathbb{E}X_{0,l_n} - l_n g]\ .
\]
By integration by parts and Fatou's lemma,

\begin{align*}
\liminf_n \Var A_n \geq \liminf_n \mathbb{E}(B_n+c_n)^2 \mathbf{1}_{\{B_n+c_n \leq 0\}} &= 2 \liminf_n \int_{-\infty}^0 |y|~\mathbb{P}(B_n+c_n \leq y)~\text{d}y \\
&\geq 2 \int_{-\infty}^0 |y| \liminf_n \mathbb{P}(B_n+c_n \leq y)~\text{d}y\ .
\end{align*}
By assumption, $c_n \to 0$. So putting $K=\CC_5^{-1/2}$under mixing condition I and $K=1$ under association condition II, we obtain the lower bound (refer to Proposition \ref{prop: main_estimate} for the definition of $C(y)$):
\[
2 \int_{-\infty}^0 |y|~C(y)~\text{d}y = 2 \int_{-\infty}^0 |y|~\Phi(Ky)~\text{d}y = \frac{2}{K^2} \int_{-\infty}^0 |y|~\Phi(y)~\text{d}y = \frac{1}{2K^2} = \Lambda^{-1}\ .
\]

\end{proof}

\subsection{Proof of Theorem~\ref{thm:main1}}\label{sec: main_thms}
We first note that if assumption 4 holds, then so does (M1). So by Theorem~\ref{thm: shorter}, if $\chi:= \underline{\chi}_2 = \overline{\chi}_{2+\delta} \in (1/2,\infty)$ for some $\delta>0$ then $\underline{\gamma} \geq \chi$.

Next assume that $\chi:= \underline{\chi}_2 = \overline{\chi}_{2+\delta} < 1/2$ for some $\delta>0$ but that $\underline{\gamma}<\chi$. Then we can find $\epsilon>0$ and a integer sequence $(l_n)$ with $l_n \to \infty$ such that
\[
\frac{\log (\mathbb{E}X_{0,l_n} - l_n g)}{\log l_n} < \chi-\epsilon \text{ for all } n\ .
\]
By definition of $\underline{\chi}_2$, 
we can restrict to a subsequence to ensure
\[
\left( \mathbb{E}X_{0,l_n} - l_n g \right)^2 < l_n^{-\epsilon} \Var X_{0,l_n} \text{ for all } n\ .
\]
Setting $k_n = \left\lfloor l_n^{\epsilon/2} \right\rfloor$, we obtain
\[
k_n = o\left( \frac{\Var X_{0,l_n}}{(\mathbb{E}X_{0,l_n} - l_n g)^2} \right)\ .
\]
By the fact that $\overline{\chi}_{2+\delta} = \underline{\chi}_2$,
\[
\frac{\|X_{0,l_n}'\|_{2+\delta}}{\|X_{0,l_n}'\|_2} = o(l_n^{\epsilon \Delta/2}) = o(k_n^\Delta)\ ,
\]
where $\Delta>0$ is from Proposition~\ref{prop: main_estimate}. By Corollary~\ref{corollary: main_cor},
\[
\Var X_{0,k_n l_n} \geq (2\Lambda)^{-1} k_n \Var X_{0,l_n} \text{ for large }n\ .
\]
Taking logarithms,
\[
2 \log \|X_{0,k_n l_n}'\|_2 \geq \log ~(2\Lambda)^{-1} + \log k_n + 2\log \|X_{0,l_n}'\|_2\ ,
\]
or
\[
2 \frac{\log \|X_{0,k_n l_n}'\|_2}{\log l_nk_n} \geq \frac{\log ~(2\Lambda)^{-1}}{\log k_nl_n} + \frac{\log k_n}{\log k_nl_n} + 2\frac{\log \|X_{0,l_n}'\|_2}{\log k_nl_n}\ .
\]
Take $n \to \infty$. Note that $\frac{\log k_n}{\log k_nl_n} \to \frac{\epsilon}{2+\epsilon}$ and use the fact that $\underline{\chi}_2 = \overline{\chi}_2$ to get
\[
2\chi \geq \frac{\epsilon}{2+\epsilon} + \frac{4}{2+\epsilon}\chi\ ,
\]
or $\chi \geq 1/2$, a contradiction.

If we assume instead that $\chi:= \overline{\chi}_2 =\overline{\chi}_{2+\delta} <1/2$ for some $\delta>0$ but $\overline{\gamma} < \chi$, we repeat the above argument, but along a subsequence. That is, let $(l_n)$ be an increasing sequence with $l_n \rightarrow \infty$ along which
\[
\frac{\log \Var X_{0,l_n}}{2 \log l_n} \rightarrow \overline \chi \quad \text{as }n \rightarrow \infty\ . 
\]
We can, as above, choose $\epsilon>0$ such that, if $k_n = \lfloor l_n^{\epsilon/2}\rfloor$, then $k_n = o\left( \frac{\Var X_{0,l_n}}{(\mathbb{E}X_{0,l_n}-l_n g)^2} \right)$. Furthermore $\|X_{0,l_n}'\|_{2+\delta}/\|X_{0,l_n}'\|_2 = o(k_n^\Delta)$ and so we obtain $\chi \geq 1/2$, a contradiction.

Last, assume that for some $\delta>0$, $\underline{\chi}_2 = \overline{\chi}_{2+\delta}<\infty$, but that $\overline{\gamma} < \chi - E$ for $E:=\frac{1}{2}(\Lambda^{\beta^{-1}}-1)$ and $\Var X_{0,n} = O\left( \frac{n}{(\log n)^\beta}\right)$ for some $\beta>0$. This implies that $-\infty< \chi \leq 1/2$. Then for any $\epsilon>0$ sufficiently small, there exists $N \geq 2$ such that
\[
(\mathbb{E}X_{0,n} - ng)^2 < n^{-2E -\epsilon} \Var X_{0,n} \text{ for } n\geq N\ .
\]
Thus if we set
\[
k_n = \min \left\{ \left\lceil \frac{\Var X_{0,n}}{(\mathbb{E}X_{0,n} - ng)^2} n^{-\epsilon/2}\right\rceil, \left\lceil n^{2E + \epsilon/2} \right\rceil \right\}\ ,
\]
then $k_n \geq n^{2E+\epsilon/2}$ for $n \geq N$ and so if we choose $\Delta>0$ from Proposition~\ref{prop: main_estimate}, then
\[
k_n^\Delta \geq n^{(2E+\epsilon/2)\Delta} \text{ for } n\geq N\ .
\]
Last, from $\underline{\chi}_2 = \overline{\chi}_{2+\delta}$, 
\[
\frac{\|X_{0,n}'\|_{2+\delta}}{\|X_{0,n}'\|_2} =  o(k_n^\Delta)\ .
\]
This means we can apply Corollary~\ref{corollary: main_cor} with $l_n=n$ to find, for any positive $a < \Lambda^{-1}$, an $N' \geq N$ such that
\[
\Var X_{0,k_n n} \geq a k_n \Var X_{0,n} \text{ for } n \geq N'\ .
\]
Rephrasing this, using $v(n) = \frac{1}{n} \Var X_{0,n}$,
\[
v(k_n n) \geq a v(n) \text{ for } n \geq N'\ .
\]
We may further increase $N'$ so that $v(N')>0$.

To get a contradiction we iterate the above variance bound. Define a sequence of integers $(n_j)$ by $n_1 = N'$ and
\[
n_{j+1} = k_{n_j}n_j \text{ for } j \geq 1\ .
\]
Note that
\begin{equation}\label{eq: n_j_bound}
n_{j+1} \geq n_j^{1+2E+\epsilon/2} \text{ for all }j \geq 1\ , 
\end{equation}
and so $n_j \to \infty$ with $n_j \geq N'$ for all $j \geq 1$. Therefore
\[
v(n_{j+1}) \geq a v(n_j) \text{ for all } j \geq 1\ .
\]
If $\Lambda < 1$ then $a$ can be chosen larger than $1$ and so by iteration,
\[
v(n_j) \geq v(N') > 0 \text{ for all } j \geq 1\ ,
\]
which contradicts $\Var X_{0,n} = O\left( \frac{n}{(\log n)^\beta} \right)$.

Otherwise, $\Lambda \geq 1$ and
\[
A := \frac{\log(1/a)}{\log(1+2E+\epsilon/2)} > 0\ .
\]
Estimate using \eqref{eq: n_j_bound}:
\[
v(n_{j+1}) \left( \frac{\log n_{j+1}}{\log n_j} \right)^A  \geq v(n_{j+1}) (1+2E+\epsilon/2)^A \geq v(n_j)\ ,
\]
or $v(n_{j+1}) (\log n_{j+1})^A \geq v(n_j) (\log n_j)^A$. By iteration,
\begin{equation}\label{eq: pizza_pie}
\frac{\Var X_{0,n_{j+1}}}{\frac{n_{j+1}}{(\log n_{j+1})^A}} \geq \frac{\Var X_{0,N'}}{\frac{N'}{(\log N')^A}} >0 \text{ for all } j \geq 1\ .
\end{equation}
By definition, then $A \geq \beta$, and taking $a \uparrow \Lambda^{-1}$, we obtain a contradiction:
\[
\beta \leq \frac{\log \Lambda}{\log(1+2E +\epsilon/2)} < \beta\ .
\]

\section{First-passage percolation revisited }\label{sec:fpp}

In this section we verify that FPP satisfies association condition II and give a proof of Theorem~\ref{thm: second_fpp_thm}. The main ingredient is a large deviation bound on the radius of time-minimizing paths under no moment condition.  Theorem~\ref{thm:main1} will give us in this context:
\begin{thm}\label{thm: main_fpp_thm}
Assume \eqref{eq: percolation_assumption} and that for some $\delta>0$,
\begin{equation}\label{eq: fpp_chi}
\chi:= \liminf_n \frac{\log \|T(0,nx)-\mathbb{E}T(0,nx)\|_2}{\log n} = \limsup_n \frac{\log \|T(0,nx)-\mathbb{E}T(0,nx)\|_{2+\delta}}{\log n}\ .
\end{equation}
If $\chi < 1/2$ then
\[
\liminf_n \frac{\log (\mathbb{E}T(0,nx)-ng(x))}{\log n} \geq \chi\,  
\]
or, equivalently, for every $\epsilon >0$, there exists $C>0$ such that for all $n$
\[ Cn^{\chi-\epsilon} \leq \E T(0,nx)-ng(x). \] 
If $\Var T(0,nx) = O \left( \frac{n}{(\log n)^\beta} \right)$ for some $\beta>0$, then
\[
\limsup_n \frac{\log (\mathbb{E}T(0,nx)-ng(x))}{\log n} \geq \chi - \frac{1}{2} (2^{\beta^{-1}}-1)\ .
\]
\end{thm}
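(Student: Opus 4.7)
The theorem will follow by applying Theorem \ref{thm:main1} to the sub-additive array $X_{m,n} = T(mx, nx)$. Assumptions 1--3 of Section~\ref{sec: setting} hold for FPP by standard arguments (sub-additivity from the pseudo-metric property of $T$, shift-ergodicity of the i.i.d.\ edge environment, and $X_{m,n} \geq 0$), and the moment bound $\E|X_{0,n}|^{2+\delta}<\infty$ is contained in the hypothesis \eqref{eq: fpp_chi}. Once association condition II is verified, the parameter $\Lambda$ equals $2$, and the two displayed conclusions are exactly the two cases of part~1 of Theorem~\ref{thm:main1}; the equivalent quantitative rephrasing $Cn^{\chi-\epsilon} \leq \E T(0,nx) - ng(x)$ is just the meaning of $\underline{\gamma} \geq \chi$ (combined with $\E T(0,nx) \geq ng(x)$ from sub-additivity). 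So the only substantive task is to verify assumption 4.

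For positive association, each $X_{m,n}$ is coordinatewise non-decreasing in the edge weights $(t_e)_{e \in \mathcal{E}^d}$, since raising any $t_e$ can only raise the infimum $T(mx,nx)$ over paths. The FKG inequality on the product measure therefore yields $\Cov(\Phi, \Psi) \geq 0$ for any coordinatewise non-decreasing $\Phi,\Psi$ of $\{X_{m,n}\}$.

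For the covariance decay condition \eqref{eq:covcond}, the plan is a cylinder-truncation argument. Given $n$ and a spacing $r$, choose a radius $R = R(n,r)$ and let $\tilde T_i^{(R)}$ denote the minimum of $T(\pi)$ over lattice paths $\pi$ from $inx$ to $(i+1)nx$ that remain within Euclidean distance $R$ of the segment $[inx, (i+1)nx]$. For $R$ small compared to $rn\|x\|$, the tubes around segments at spacing at least $r$ are disjoint, so $\tilde T_i^{(R)}$ and $\tilde T_j^{(R)}$ for $|i-j|\geq r$ depend on disjoint edge families and are independent. Setting $D_i = \tilde T_i^{(R)} - T(inx, (i+1)nx) \geq 0$ and expanding
\[
\Cov\bigl(T(inx,(i+1)nx),\, T(jnx,(j+1)nx)\bigr) = -\Cov(\tilde T_i^{(R)}, D_j) - \Cov(D_i, \tilde T_j^{(R)}) + \Cov(D_i, D_j),
\]
Cauchy--Schwarz then reduces the bound to $\|D_i\|_2 \|\tilde T_j^{(R)}\|_2$ and $\|D_i\|_2 \|D_j\|_2$.

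The main technical obstacle will be showing that $\|D_i\|_2$ decays fast enough in $R/n$ to produce a function $u(r) \to 0$ uniformly in $n$ after summing over $j$ with $|i-j|\geq r$ and dividing by $\Var T(0,nx)$. The event $\{D_i>0\}$ is contained in the event $A_i^c$ that some geodesic from $inx$ to $(i+1)nx$ leaves the $R$-tube, and under \eqref{eq: percolation_assumption} together with the $2{+}\delta$ moment condition implicit in \eqref{eq: fpp_chi}, standard geodesic-wandering estimates yield a polynomial-rate bound on $\prob(A_i^c)$. A H\"older interpolation of the form
\[
\|D_i\|_2^2 \leq \|\tilde T_i^{(R)}\|_{2+\delta'}^{\,2}\, \prob(A_i^c)^{\delta'/(2+\delta')}
\]
for suitable $\delta'>0$ then converts this into the required $L^2$ decay. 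Calibrating $R$ as a function of $r$ and $n$ so that disjointness is preserved while the truncation error stays negligible relative to $n^{2\chi}$ is the delicate step. Once \eqref{eq:covcond} is in hand, Theorem~\ref{thm:main1} applies directly and yields both stated lower bounds on $\gamma$.
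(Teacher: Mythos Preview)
Your overall strategy is the paper's: verify that $X_{m,n}=T(mx,nx)$ satisfies association condition~II and invoke Theorem~\ref{thm:main1}. The FKG step for positive association is the same. The gap is in your verification of~\eqref{eq:covcond}.

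With a single radius $R=R(n,r)$, stationarity forces $\|D_j\|_2$ and $\|\tilde T_j^{(R)}\|_2$ to be independent of $j$. Your Cauchy--Schwarz expansion therefore bounds every term $\Cov(X_{in,(i+1)n},X_{jn,(j+1)n})$ by the \emph{same} constant $2\|\tilde T_0^{(R)}\|_2\|D_0\|_2+\|D_0\|_2^2$, and the sum $\sum_{j:|i-j|\ge r}$ in~\eqref{eq:covcond} diverges. To obtain a convergent sum you must let $R$ depend on $m=|i-j|$ (say $R_m\asymp mn\|x\|$), and then you need $\mathbb P(A_i^c)$ to decay as $R$ grows past the threshold $Mn\|x\|$ of Theorem~\ref{thm: geodesic_length}. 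A vague ``polynomial-rate bound'' does not obviously suffice once the polynomial growth of $\|\tilde T_i^{(R_m)}\|_{2+\delta'}$ in $m$ is accounted for; what is really required is something like $\mathbb P(\mathrm{diam}\,G(0,nx)\ge L)\le e^{-cL}$ for all $L\ge Cn\|x\|$, which is stronger than Theorem~\ref{thm: geodesic_length} as stated (though provable with the same ingredients).

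The paper avoids this by bounding the mixing coefficient $\alpha_n(m)$ directly. It truncates each $X_{in,(i+1)n}$ to the appropriate side of a hyperplane placed midway between the two blocks and proves $\alpha_n(m)\le Ce^{-cmn\|x\|_\infty}$: a geodesic violating the truncation must cross the hyperplane, hence make an excursion of order $mn\|x\|$, and by examining its first exit from a ball of radius $\eta mn\|x\|$ one produces a sub-geodesic whose diameter exceeds $M$ times its endpoint separation, so Theorem~\ref{thm: geodesic_length} applies at scale $mn\|x\|$. Davydov's inequality (Lemma~\ref{lem:orthogcontrol}) then converts this into $\Cov(X_{in,(i+1)n},X_{jn,(j+1)n})\le C'e^{-c'|i-j|n\|x\|}$ (Proposition~\ref{prop: cov_fpp}), which is summable in $j$. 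A secondary omission in your plan: the ratio in~\eqref{eq:covcond} requires $\Var T(0,nx)>0$ for all $n$, which the paper verifies separately from non-degeneracy of $t_e$.
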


\subsection{Geodesic radius bound}

A path $\pi$ is a geodesic from $x$ to $y$ if $T(\pi) = T(x,y)$. Under \eqref{eq: percolation_assumption}, for all $x,y \in \mathbb{Z}^d$,
\[
\mathbb{P}(\exists \text{ a geodesic from } x \text{ to } y) = 1\ .
\]
(See \cite[(9.23)]{aspects}.) Write $G(0,x)$ for the union of all vertices in geodesics from $0$ to $x$. For a set $X \subset \mathbb{R}^d$, define $\text{diam }X = \sup_{x,y \in X} \|x-y\|_\infty$. The following theorem may be of independent interest.
\begin{thm}\label{thm: geodesic_length}
Assuming \eqref{eq: percolation_assumption}, there exist $M, \CC_{10} >0$ such that
\[
\mathbb{P}(\mathrm{diam }~G(0,x) \geq M\|x\|_\infty) \leq e^{-\CC_{10} \|x\|_\infty} \text{ for all } x \in \mathbb{Z}^d\ .
\]
\end{thm}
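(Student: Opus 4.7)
The plan is to sandwich the passage time between matching linear upper and lower bounds in $\|x\|_\infty$. By right-continuity and the hypothesis $\mathbb{P}(t_e = 0) < p_c(d)$, fix $\delta > 0$ with $\mathbb{P}(t_e \leq \delta) < p_c(d)$, so that ``slow'' edges (those with $t_e \leq \delta$) form a subcritical bond percolation; independently, since $\mathbb{P}(t_e \leq C_1) \to 1$ as $C_1 \to \infty$, fix $C_1$ with $\mathbb{P}(t_e \leq C_1) > p_c(d)$, so ``fast'' edges form a supercritical percolation. Chemical-distance bounds for supercritical percolation then yield constants $K, a_1 > 0$ such that $\mathbb{P}(T(0,x) > K\|x\|_\infty) \leq e^{-a_1 \|x\|_\infty}$: with exponential probability both $0$ and $x$ lie near the infinite fast cluster, the chemical distance between their nearest cluster points is $O(\|x\|_\infty)$, and along such a path every edge has weight $\leq C_1$.

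The lower bound is the main content. Partition $\mathbb{Z}^d$ into boxes of side $L$, to be chosen large, and call a box \emph{hard} if every path crossing it between opposite faces contains at least $L/2$ edges with $t_e > \delta$. Subcriticality of slow edges implies $\mathbb{P}(\text{hard}) \geq 1 - e^{-bL}$ uniformly, so non-hard boxes form a highly subcritical site percolation on the renormalized lattice. Any path from $0$ to $v$ traverses at least $\|v\|_\infty/L$ boxes; a Peierls argument on the renormalized lattice (controlling the sizes of connected non-hard clusters) shows that outside an event of probability $\leq e^{-a_2 \|v\|_\infty}$ only a small fraction of these crossings are of non-hard boxes, and each hard crossing contributes at least $\delta L/2$ to the passage time. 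Hence there exist $c, a_2 > 0$ with
\[
\mathbb{P}(T(0,v) < c\|v\|_\infty) \leq e^{-a_2 \|v\|_\infty} \quad \text{for every } v \in \mathbb{Z}^d.
\]

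To combine: if $\mathrm{diam}\, G(0,x) \geq M\|x\|_\infty$ then, using $0 \in G(0,x)$, the triangle inequality yields some $v \in G(0,x)$ with $\|v\|_\infty \geq M\|x\|_\infty / 2$; since $v$ lies on a geodesic from $0$ to $x$ we have $T(0,v) + T(v,x) = T(0,x)$, in particular $T(0,v) \leq T(0,x)$. Picking $M$ so that $cM/2 > K$ gives
\[
\{\mathrm{diam}\, G(0,x) \geq M\|x\|_\infty\} \subseteq \{T(0,x) > K\|x\|_\infty\} \cup \bigcup_{\|v\|_\infty \geq M\|x\|_\infty/2} \{T(0,v) < c\|v\|_\infty\},
\]
and a union bound (the tail sum $\sum_{\|v\|_\infty \geq R} e^{-a_2 \|v\|_\infty}$ is dominated by its boundary term $\sim R^{d-1} e^{-a_2 R}$) delivers the required exponential decay.

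The main obstacle is the lower bound. The renormalization must be set up so that any path---free to wander through clusters of slow edges and across small clumps of non-hard boxes---is still forced to make order $\|v\|_\infty/L$ hard-box crossings. This requires a careful quantitative choice of $L$ together with geometric control on non-hard clusters in the renormalized lattice, in the spirit of Grimmett--Marstrand block constructions; the absence of any moment assumption on $t_e$ is what forces this coarse-graining route, rather than a direct Chernoff bound on per-edge weights.
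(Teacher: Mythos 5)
Your combination step and your lower bound (``$\mathbb{P}(T(0,v)<c\|v\|_\infty)\leq e^{-a_2\|v\|_\infty}$'', which is essentially Kesten's Proposition~5.8) are fine, but the very first step is a genuine gap: under the sole hypothesis \eqref{eq: percolation_assumption} the claimed bound $\mathbb{P}(T(0,x)>K\|x\|_\infty)\leq e^{-a_1\|x\|_\infty}$ is simply false. The theorem makes no moment assumption whatsoever on $t_e$, and $T(0,x)\geq \min_{e\ni 0}t_e$, so
\[
\mathbb{P}\bigl(T(0,x)>K\|x\|_\infty\bigr)\ \geq\ \mathbb{P}\bigl(t_e>K\|x\|_\infty\bigr)^{2d},
\]
which for a heavy-tailed $t_e$ (say with polynomial or slower tail decay) is only polynomially small, not exponentially small. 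Your sketch hides this in the phrase ``both $0$ and $x$ lie near the infinite fast cluster'': to get from $0$ (or $x$) to that cluster you must pay the weights of a few edges adjacent to the endpoints, and those weights are exactly the ones you cannot control without moments. So the event inclusion you write cannot have exponentially small probability on its first piece, and the argument as a whole does not prove the stated exponential bound for all distributions allowed by the theorem.

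The repair is to never bound $T(0,x)$ itself, and this is what the paper does. Fix $K$ with $p=\mathbb{P}(t_e\leq K)$ supercritical (in fact above the $p_0$ of Lemma~\ref{lem: perc}) and work with the percolation of ``fast'' edges. By Kesten's shell construction (Lemma~\ref{lem: perc}), with probability $1-e^{-c\|x\|_\infty}$ \emph{every} path from $0$ to $\partial B(n_x)$, in particular the geodesic itself, meets the infinite fast cluster at some $u\in B(n_x)$, and similarly near $x$ at some $v$. The portion of the geodesic between $u$ and $v$ is a geodesic between two points of the infinite fast cluster, so by Antal--Pisztora its passage time is at most $2K\rho\|x\|_\infty$ --- no edge weight outside the fast cluster is ever used. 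On the other hand, if $\mathrm{diam}\,G(0,x)\geq 2M\|x\|_\infty$ this middle portion has at least order $M\|x\|_\infty$ edges, and Kesten's Proposition~5.8 (long self-avoiding paths have passage time at least linear in their length, with exponentially small failure probability) gives the contradiction once $M$ is large. Your coarse-graining lower bound plays the role of Proposition~5.8, but you still need the shell/Antal--Pisztora step to replace your unprovable linear upper bound on $T(0,x)$.
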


Under finite exponential moments, the proof of the above theorem is straightforward. With no moment assumption, we will need some percolation constructions. For $p \in [0,1]$ let $\mathbb{P}_p$ be the product measure on $\Omega = \{0,1\}^{\mathcal{E}^d}$ with marginal $\mathbb{P}(\omega(e) = 1) = p$, where $\omega$ is a typical element of $\Omega$. In a configuration $\omega$ we write $x \to y$ if there is a path from $x$ to $y$ with edges $e$ satisfying $\omega(e)=1$. This gives a connectivity equivalence relation and the equivalence classes are called open clusters. It is known that for $p > p_c$ there is almost surely a unique infinite open cluster. Define $B(n) = \{x \in \mathbb{Z}^d : \|x\|_\infty \leq n\}$ and $\partial B(n) = \{x \in \mathbb{Z}^d : \|x\|_\infty = n\}$.

\begin{lem}\label{lem: perc}
Let $A_n$ be the event that every path from 0 to $\partial B(n)$ intersects the infinite open cluster. There exists $p_0 \in (p_c,1)$ such that if $p \in [p_0,1]$ then for some $\CC_{11}>0$,
\[
\mathbb{P}_p(A_n) \geq 1-e^{-\CC_{11}n} \text{ for all } n \geq 1\ .
\]
\end{lem}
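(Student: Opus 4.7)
\textbf{Proof plan for Lemma~\ref{lem: perc}.}

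The plan is to execute a Peierls-type contour argument, which is viable because $1-p$ is small for $p$ near $1$. First I would reduce the event $A_n^c$ to the existence of a finite ``bad'' cluster with a fully closed boundary. If $A_n^c$ holds, there is a nearest-neighbor lattice path $\pi = v_0 v_1 \cdots v_k$ from $v_0=0$ to $v_k \in \partial B(n)$ whose vertices all lie in $F := \mathbb{Z}^d \setminus \mathcal{C}_\infty$. For each $v_i \in \pi$, its open cluster $C(v_i)$ is finite. Setting
\[
H \; := \; \bigcup_{i=0}^{k} C(v_i),
\]
$H$ is a finite, lattice-connected subset of $\mathbb{Z}^d$ containing $0$ and reaching $\partial B(n)$, so $\mathrm{diam}\, H \geq n$. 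Moreover, by maximality of the open clusters, every edge $\{u,w\} \in \partial_e H$ with $u \in H, w \notin H$ must be closed: if it were open, then $w$ would belong to the same open cluster as $u$, contradicting $w \notin H$.

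Next I would prove an isoperimetric lower bound $|\partial_e H| \geq 2(d-1)(n+1)$. Because $\mathrm{diam}\, H \geq n$ in the $\ell^\infty$ sense, there is a coordinate axis, say the first, along which two points of $H$ differ by at least $n$. By lattice connectivity of $H$, each of the $n+1$ hyperplane slices $\{w : w_1 = k\}$ in this range meets $H$ in a nonempty finite set. Viewed inside that slice $\cong \mathbb{Z}^{d-1}$, any nonempty finite set has edge boundary of size at least $2(d-1)$ (a trivial bound already attained by a single vertex). Since slice-internal boundary edges are distinct across slices and are all in $\partial_e H$, the bound follows.

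The third step is a union bound over $H$. Using the standard fact that the outer $*$-boundary of a finite connected subset of $\mathbb{Z}^d$ is $*$-connected (e.g.\ Tim\'ar's theorem), one obtains
\[
\#\bigl\{H \subset \mathbb{Z}^d : H \ni 0 \text{ finite connected, } |\partial_e H| = m\bigr\} \;\leq\; K(d)^m
\]
for some $K(d) < \infty$. Combining with the observation that $\partial_e H$ must be entirely closed,
\[
\mathbb{P}_p(A_n^c) \;\leq\; \sum_{m \geq 2(d-1)(n+1)} K(d)^m (1-p)^m \;\leq\; 2\bigl(K(d)(1-p)\bigr)^{2(d-1)(n+1)},
\]
valid once $K(d)(1-p) < 1/2$. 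Choosing $p_0 := \max\bigl(p_c + \tfrac{1-p_c}{2},\, 1 - \tfrac{1}{2K(d)}\bigr) \in (p_c,1)$ guarantees both $p_0 > p_c$ and the geometric decay, giving the desired bound with $\CC_{11} := 2(d-1)\log\!\bigl(1/(K(d)(1-p_0))\bigr)$.

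The main technical obstacle is the enumeration of finite connected sets by edge-boundary size rather than by volume (a cluster can be much larger than its boundary), which is why the $*$-connectedness of the outer boundary is crucial; this is a well-known but nontrivial combinatorial fact. The isoperimetric step is elementary once one argues slice-by-slice rather than invoking the usual $|H|^{(d-1)/d}$ inequality, which would yield only a sub-linear bound $n^{(d-1)/d}$ and be insufficient.
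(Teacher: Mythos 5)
Your argument is correct, but it follows a genuinely different route from the paper. The paper does not run a contour argument from scratch: it imports Kesten's construction from \cite[Lemma~2.24]{aspects} (white/black coloring of vertices, black clusters on the $*$-lattice $\mathcal{L}$, and the shells $S(v)=\partial_{ext}C(D_{n(v)}(v),b)$), uses Kesten's tail bound $\mathbb{P}(\mathrm{diam}\,S(v)>k)\leq K_2(k+3)^{d-1}3^{-k/4}$ for $p>p_0$, and then observes that $S(0)$ is a white, $\mathbb{Z}^d$-connected set separating $0$ from $\infty$ whose vertices all lie in the infinite open cluster, so any path from $0$ to $\partial B(3k)$ must hit that cluster unless $\mathrm{diam}\,S(0)>k$. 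Your Peierls-type proof instead takes the union $H$ of the finite open clusters along a hypothetical path avoiding the infinite cluster, notes that $\partial_e H$ is entirely closed, proves a linear-in-$n$ lower bound on $|\partial_e H|$ by the slice argument (correct, and indeed necessary, since the naive isoperimetric bound only gives $n^{(d-1)/d}$), and enumerates contours via $*$-connectedness of the boundary. This is self-contained and elementary, at the price of redoing standard but nontrivial combinatorics (you should also say explicitly that the contour must be anchored within distance $|\partial_e H|$ of the origin along an axis — or equivalently that $H\mapsto\partial_e H$ is injective on connected sets containing $0$ — to get the bound $K(d)^m$, and that a.s.\ uniqueness of the infinite cluster for $p>p_c$ is what makes each $C(v_i)$ finite). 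The paper's route buys a ready-made $p_0$ and estimate with essentially no new work, and reuses a construction already quoted verbatim from Kesten; your route avoids the auxiliary graph $\mathcal{L}$ and shell machinery entirely and may yield a $p_0$ closer to $1$, which is harmless here since the application only needs some $p_0<1$ (one chooses $K$ with $\mathbb{P}(t_e\leq K)>p_0$). Both give the claimed exponential bound uniformly in $p\in[p_0,1]$.
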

\begin{proof}
This result was essentially proved by Kesten \cite[Lemma~2.24]{aspects} and the next two paragraphs are mainly copied from there. Assign (random) colors to the vertices of $\mathbb{Z}^d$: $x$ is \emph{white} if all edges $e$ incident to $x$ have $\omega(e)=1$ and $x$ is \emph{black} otherwise. Next we need an auxiliary graph $\mathcal{L}$. The vertex set of $\mathcal{L}$ is the same as that of $\mathbb{Z}^d$. Two distinct vertices $u$ and $v$ are \emph{adjacent} on $\mathcal{L}$ -- and hence have an edge of $\mathcal{L}$ between them -- if $\|u-v\|_\infty = 1$. A set $S$ of vertices (edges) of $\mathcal{L}$ is \emph{connected} if for each $u_1,u_2 \in S$ ($e_1,e_2 \in S$), there exists a path on $\mathcal{L}$ whose first and last vertex (edge) are $u_1,u_2$ respectively ($e_1,e_2$ respectively). If $A$ is an $\mathcal{L}$-connected set of vertices we define $C(A,b)$, the \emph{black cluster} of $A$ on $\mathcal{L}$, as the union of $A$ and the set of all vertices $v_0$ of $\mathcal{L}$ for which there exists a path $(v_0,e_1, \ldots, e_n,v_n)$ on $\mathcal{L}$ from $v_0$ to some $v_n \in A$ such that $v_0, \ldots, v_{n-1}$ are all outside $A$ and black. Similarly, for a $\mathbb{Z}^d$-connected set of vertices $A$, $C(A,w)$, the \emph{white cluster} of $A$ on $\mathbb{Z}^d$, is the union of $A$ and the set of all vertices $v_0$ of $\mathbb{Z}^d$ for which there exists a path $(v_0, e_1, \ldots, e_n,v_n)$ on $\mathbb{Z}^d$ from $v_0$ to some $v_n \in A$ such that $v_0, \ldots, v_{n-1}$ are all outside $A$ and white. We shall write $C(v,w)$ for $C(\{v\},w)$. Note that we defined black clusters only on $\mathcal{L}$ and white clusters only on $\mathbb{Z}^d$. The color of vertices in $A$ has no influence on $C(A,b)$. By definition always $A \subset C(A,b)$. Similarly $A \subset C(A,w)$.

For an $\mathcal{L}$-connected set of vertices $C$ we define its exterior boundary as $\partial_{ext}C$, the set of vertices $v$ of $\mathcal{L}$ such that $v \notin C$ but $v$ is adjacent on $\mathcal{L}$ to some vertex $u$ in $C$ and there exists a path on $\mathbb{Z}^d$ from $v$ to $\infty$ which is disjoint from $C$. We next define shells $S(v)$. For any vertex $v = (v(1), \ldots, v(d))$ of $\mathbb{Z}^d$ (or $\mathcal{L}$), let 
\[
D_k(v) = [v(1)-k,v(1)+k] \times \cdots \times [v(d)-k,v(d)+k]
\]
and $n=n(v)$ as the minimal $k$ for which there exists a vertex $u \in D_k(v)$ with an infinite white cluster $C(u,w)$ on $\mathbb{Z}^d$. Set
\[
S(v) = \partial_{ext}C(D_{n(v)}(v),b)\ .
\]
The following properties of $n(v)$ and $S(v)$ hold.
\begin{enumerate}
\item \cite[Eq.~(2.22)]{aspects} : All vertices in $S(v)$ are white.
\item \cite[Lemma~2.23]{aspects} : If $C(D_{n(v)}(v),b)$ is finite, then $S(v)$ is $\mathbb{Z}^d$-connected. Moreover in this case $S(v)$ separates $v$ from $\infty$, in the sense that any path on $\mathbb{Z}^d$ from $v$ to $\infty$ must intersect $S(v)$.
\item \cite[Lemma~2.24]{aspects} : There exists $p_0 \in (p_c,1)$ (listed as $1-\pi_0$ in \cite[(2.20)]{aspects}) such that if $p > p_0$, then $C(D_{n(v)}(v),b)$ is almost surely finite and for some $K_2$,
\begin{equation}\label{eq: exponential_estimate}
\mathbb{P}(\text{diam }S(v) > k) \leq K_2 (k+3)^{d-1}3^{-k/4} \text{ for all } k \geq 0\ .
\end{equation}
\end{enumerate}
Because there is a vertex $u \in D_{n(0)}(0)$ with an infinite white cluster on $\mathbb{Z}^d$, this vertex must be contained in the infinite open cluster (in $(\omega(e))$); therefore, there is a vertex in $D_{n(0)}(0)$ that is connected to $\infty$ by an open path in $(\omega(e))$. Furthermore, no vertex of $D_{n(0)}(0)$ can be in $S(0)$, so we can connect $0$ to $u$ on $\mathbb{Z}^d$ without using a vertex of $S(0)$. This means that any path on $\mathbb{Z}^d$ from $u$ to $\infty$ must intersect $S(0)$ and so a vertex of $S(0)$ is connected to $\infty$ by an open path in $(\omega(e))$. Last, because $S(0)$ is $\mathbb{Z}^d$-connected, the definition of white implies that each vertex of $S(0)$ is connected to $\infty$ by an open path in $(\omega(e))$. So if $\text{diam }S(0) \leq k$ then each path on $\mathbb{Z}^d$ from $0$ to the set $\partial B(3k)$ must intersect $S(0)$ and thus the infinite open cluster. The estimate \eqref{eq: exponential_estimate} then completes the proof.
\end{proof}

We now recall the result of \cite[Theorem~1.1]{AP} and afterward prove Theorem \ref{thm: geodesic_length}.
\begin{lem}[Antal-Pisztora]\label{lem: AP}
Let $p>p_c$. Then there exists a constant $\rho = \rho(p,d) \in [1,\infty)$ such that
\[
\limsup_{\|y\|_\infty \to \infty} \frac{1}{\|y\|_\infty} \log \mathbb{P}_p ( d_I(0,y) > \rho\|y\|_\infty,~0,y \in I) < 0\ ,
\]
where $I$ is the infinite open cluster and $d_I$ is the intrinsic distance in $I$.
\end{lem}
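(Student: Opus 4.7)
The natural approach is the classical renormalization argument of Pisztora, combined with the Grimmett--Marstrand slab theorem to handle the full supercritical phase $p > p_c$. The goal is to reduce the problem of bounding the chemical distance $d_I$ to a path-finding problem on a coarse-grained lattice, where the local configuration is so strongly supercritical that standard large-deviation estimates apply.

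First, I would set up a block renormalization of $\mathbb{Z}^d$ into cubes $B_z$ of side length $N$ centered at $Nz$ for $z \in \mathbb{Z}^d$, with $N$ to be chosen large. For each $z$, declare $B_z$ \emph{good} if, inside a slightly larger surrounding box $B_z'$ (say of side $3N$), there is a unique open cluster containing a definite fraction of the vertices, this cluster crosses $B_z$ in every coordinate direction, and its restriction to $B_z \cap B_{z'}$ for each nearest neighbor $z'$ is nonempty and connected to the corresponding crossing cluster of $B_{z'}$. The decisive input is Pisztora's renormalization theorem: for $p > p_c$ and any $q \in (p_c^{\text{site}}(\mathbb{Z}^d), 1)$, one can choose $N = N(p,q,d)$ so that the field $(\mathbf{1}_{\{B_z \text{ good}\}})_{z \in \mathbb{Z}^d}$ stochastically dominates Bernoulli site percolation on $\mathbb{Z}^d$ at density $q$. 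In dimension $d \geq 3$ this requires the Grimmett--Marstrand slab result, which ensures that crossings in slabs occur with probability tending to one.

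Given this, I would pick $q$ so close to $1$ that \emph{bad} sites for the coarse field are in the subcritical regime and, moreover, every finite bad cluster has diameter with exponentially decaying tail (standard Peierls/contour estimates). Now suppose $0, y \in I$, and let $z_0, z_y$ be the coarse-lattice blocks containing them. With probability $1 - e^{-c\|y\|_\infty}$, both $z_0$ and $z_y$ are within bounded coarse-distance of the infinite good cluster, and all bad clusters encountered in a thin tube between $z_0$ and $z_y$ have diameter at most a small constant times $\log \|y\|_\infty$. I would then construct a coarse-scale path from $z_0$ to $z_y$ that goes around bad clusters, and convert it into a path in $I$ by stitching together the dominant clusters inside each traversed good block, each of which provides an internal connection of length at most $C N^d$ between the appropriate entry and exit faces.

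Counting contributions, the coarse-scale path uses at most $C\|y\|_\infty / N$ good blocks, each contributing at most $O(N^d)$ to the chemical distance, so $d_I(0,y) \leq \rho \|y\|_\infty$ for a constant $\rho = \rho(p,d)$, on an event whose complement has probability at most $e^{-c\|y\|_\infty}$. The main obstacle is the renormalization step itself: establishing the stochastic domination requires Pisztora's careful choice of the good block event together with the Grimmett--Marstrand slab theorem to guarantee that all the required crossings and interconnections inside $B_z'$ have probability tending to $1$ as $N \to \infty$. Once that coarse Bernoulli comparison is in hand, the path-construction, detour-bounding, and exponential tail estimates are a relatively routine combination of Peierls arguments and deterministic graph-theoretic bookkeeping.
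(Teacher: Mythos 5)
The paper does not prove this lemma at all: it is imported verbatim as an external input, cited as Theorem~1.1 of Antal--Pisztora \cite{AP}, and the authors immediately move on to use it in the proof of Theorem~\ref{thm: geodesic_length}. So there is no in-paper argument to compare against; the relevant comparison is with the original proof in \cite{AP}, and your plan is essentially that proof's strategy: Pisztora-style coarse graining into $N$-blocks, stochastic domination of the good-block field by highly supercritical site percolation (using Grimmett--Marstrand to make the block events have probability close to $1$ for all $p>p_c$ when $d\geq 3$), and then conversion of a coarse path into a genuine open path whose length is linear in $\|y\|_\infty$ outside an event of exponentially small probability. In that sense your proposal is the standard route rather than a new one, and at the level of a plan it is sound.

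One step deserves more care than your sketch gives it. The bound you actually need is not that each bad cluster met has diameter $O(\log\|y\|_\infty)$, but that the \emph{total} detour cost --- the sum of the sizes of all bad coarse clusters that the constructed path must circumvent --- is at most a small constant times $\|y\|_\infty/N$, except with probability $e^{-c\|y\|_\infty}$. This does not follow from a per-cluster logarithmic bound together with the statement that the path uses $C\|y\|_\infty/N$ good blocks, because the detours themselves add blocks and the path is configuration-dependent (so one cannot simply union bound over a fixed deterministic tube). The fix is the one used in \cite{AP}: the exponential tail on the size of a subcritical bad cluster gives finite exponential moments, and an exponential Chebyshev/Peierls estimate over connected coarse sets joining $z_0$ to $z_y$ shows that the aggregate bad volume along some such set is linear with exponentially small failure probability; this is the technical heart of the argument and should be stated as such rather than folded into ``routine bookkeeping.''
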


\begin{proof}[Proof of Theorem~\ref{thm: geodesic_length}]
We choose $K>0$ such that $p:= \mathbb{P}(t_e \leq K) > p_0$ and define a percolation configuration $(\eta_e)$ from the weights $(t_e)$ by
\[
\eta_e = \begin{cases}
1 & \text{ if } t_e \leq K \\
0 & \text{ if } t_e > K
\end{cases}\ .
\]

\begin{figure}
\centering
 \includegraphics[scale=0.7]{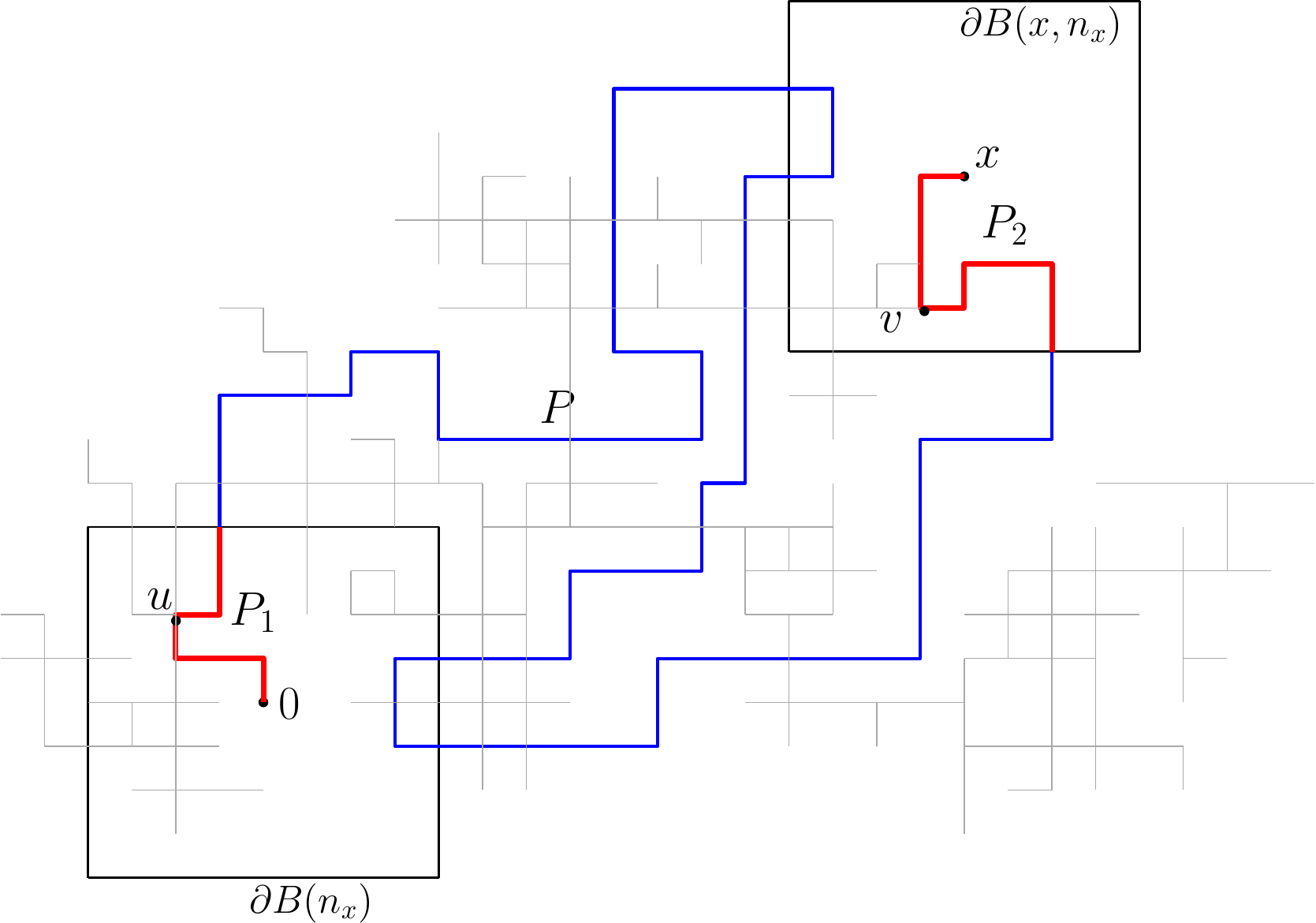}
 \caption{Construction of the geodesic $P$ from $0$ to $x$. The points $u$ and $v$ belong to the infinite open cluster (represented by the gray edges). The geodesic from $u$ to $v$ has diameter at  least $(M-1/2)\| x \|_{\infty}$.}
 \label{fig01a}
\end{figure}

For a given $x \in \mathbb{Z}^d$ with $\| x\|_{\infty} \geq 4$ and integer $M>0$, we first write $n_x = \lfloor \|x\|_\infty /4 \rfloor$ and estimate
\begin{equation}\label{eq: pepper}
\mathbb{P}(\text{diam }G(0,x) \geq 2M\|x\|_\infty) \leq 2e^{-\CC_{11} n_x} + \mathbb{P}(\text{diam } G(0,x) \geq  2M\|x\|_\infty,~A_{n_x},~B_{n_x})\ ,
\end{equation}
where $A_{n_x}$ is written for the event from Lemma~\ref{lem: perc} for the percolation configuration $(\eta_e)$ and $B_{n_x}$ is the same event with $0$ translated to $x$. On the event on the right we can select a self-avoiding geodesic $P$ from $0$ to $x$ which has diameter at least $M\|x\|_\infty$. Write $P_1$ for the portion of $P$ from 0 to its first intersection of $\partial B(n_x)$ and $P_2$ for the portion from its last intersection of $\partial B(x,n_x)$ to $x$ (here $B(x,n)$ is the translate of $B(n)$ centered at $x$). See Figure \ref{fig01a}. We can then choose $u,v$ vertices of $P_1$ and $P_2$ respectively such that $\|u\|_\infty \leq n_x$ and $\|v-x\|_\infty \leq n_x$ and both $u,v$ are in the infinite open cluster of $(\eta_e)$. By construction, the portion $P_3$ of $P$ from $u$ to $v$ is a geodesic that has diameter at least $(M-1/2)\|x\|_\infty$. We now apply Lemma~\ref{lem: AP} to find $\CC_{12}, \CC_{13}$ such that
\[
\mathbb{P}_p(d_I(0,y) > \rho\|y\|_\infty,~0,y \in I) \leq \CC_{13} e^{-\CC_{12}\|y\|_\infty} \text{ for all } y \in \mathbb{Z}^d\ .
\]
So for $u \in B(n_x)$ and $v \in B(x,n_x)$, as $2\|x\|_\infty \geq \|u-v\|_\infty \geq \|x\|_\infty/4$,
\[
\mathbb{P}_p(d_I(u,v) > 2\rho\|x\|_\infty,~u,v \in I) \leq \CC_{13} e^{-(\CC_{12}/4)\|x\|_\infty}
\]
and by a union bound,
\[
\mathbb{P}_p(d_I(u,v) > 2\rho\|x\|_\infty \text{ for some } u \in B(n_x) \cap I,~v \in B(x,n_x) \cap I) \leq \CC_{14}e^{-\CC_{15} \|x\|_\infty}\ 
\]
for some $\CC_{14}, \CC_{15}>0$.

On the complement of this event, each $u \in B(n_x) \cap I$ and $v \in B(x,n_x) \cap I$ have $d_I(u,v) \leq 2 \rho\|x\|_\infty$ and so $T(u,v) \leq 2K\rho \|x\|_\infty$. Use this in the right side of \eqref{eq: pepper} to bound it above by
\begin{equation}\label{eq: pepper_2}
\CC_{16} e^{-\CC_{15} \|x\|_\infty} + \mathbb{P}\left(\begin{array}{c}
\exists ~u \in B(n_x),v \in B(x,n_x) \text{ with } \text{diam }G(u,v) \text{ at}\\ \text{least } (M-1/2)\|x\|_\infty 
 \text{ but } T(u,v) \leq 2K\rho\|x\|_\infty
 \end{array}\right)\ .
\end{equation}

Last, we appeal to Kesten's result \cite[Proposition~5.8]{aspects}, which states that under \eqref{eq: percolation_assumption}, there exist constants $a,\CC_{17}>0$ such that for all $n \geq 1$,
\[
\mathbb{P}\bigg(\exists \text{ self-avoiding } \gamma \text{ starting at } 0 \text{ with } \#\gamma \geq n \text{ but with } T(\gamma) < an \bigg) \leq e^{-\CC_{17}n}\ .
\]
By a union bound, for all $n \geq 1$ and $x \in \mathbb{Z}^d$ with $\| x\|_{1} \geq 4$,
\[
\mathbb{P}\bigg( \exists \text{ self-avoiding } \gamma \text{ from }B(n_x) \text{ to } B(x,n_x) \text{ with } \#\gamma \geq n \text{ but } T(\gamma) < an \bigg) \leq (2n_x+1)^d e^{-\CC_{17} n}\ .
\]
If there exist $u \in B(n_x)$ and $v \in B(x,n_x)$ with diam $G(u,v) \geq (M-1/2)\|x\|_\infty$ then we may select a geodesic between $u$ and $v$ that has at least $(M/2-1/4)\|x\|_\infty$ edges. So fixing any $M$ with $4K\rho/a +1/2 < M$, the expression in \eqref{eq: pepper_2} is bounded by 
\[
\CC_{16} e^{-\CC_{15} \|x\|_\infty} + (2n_x+1)^d e^{-\CC_{17} (M/2-1/4)\|x\|_\infty}
\]
and this is bounded by $\CC_{18}e^{-\CC_{10}\|x\|_\infty}$ for some $\CC_{18}>0$, ending the proof of the theorem.

\end{proof}

\subsection{Verifying hypotheses for FPP}\label{sec:verhypFPP}
We begin by establishing suitable mixing for the sequence $\{X_{m,n}\}$ defined by $X_{m,n} = T(mx,nx)$.
\begin{prop}
Assuming \eqref{eq: percolation_assumption}, there exist $m_0,\CC_{19},\CC_{20}>0$ such that mixing coefficient $\alpha_n(m)$ from \eqref{eq:alphadef} satisfies
\[
\alpha_n(m) \leq \CC_{19}e^{-\CC_{20}mn \|x\|_\infty} \text{ for } n \geq 1,~m > m_0,~ x \in \mathbb{Z}^d\ .
\]
\end{prop}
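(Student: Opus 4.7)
The plan is to decouple $A$ and $B$ by partitioning the edges along a separating hyperplane and replacing each relevant passage time by a truncated version depending only on edges in one half-space.

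By lattice symmetry I may assume $x_1 = \|x\|_\infty > 0$. Set $h := (k+m/2)nx_1$, let $E_L$ be the edges with both endpoints of first coordinate $\leq h$, and $E_R$ those with both endpoints $\geq h+1$. These sets are disjoint, so $\sigma(t_e : e \in E_L) \perp \sigma(t_e : e \in E_R)$. For $0 \leq i < k$ set
\[
\tilde{X}_{in,(i+1)n} := \inf\{T(\pi) : \pi \text{ is a path from } inx \text{ to } (i+1)nx \text{ using only edges of } E_L\},
\]
and analogously define $\tilde X_{jn,(j+1)n}$ for $j \geq k+m$ using $E_R$. Let $\tilde A$ and $\tilde B$ be the events obtained from $A$ and $B$ by substituting $\tilde X$ for $X$ in their defining functionals. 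Then $\tilde A \in \sigma(t_e: e \in E_L)$ and $\tilde B \in \sigma(t_e: e \in E_R)$, so $\tilde A \perp \tilde B$ and the standard symmetric-difference bound gives
\[
|\mathbb{P}(A \cap B) - \mathbb{P}(A)\mathbb{P}(B)| \leq 2\bigl[\mathbb{P}(A \triangle \tilde A) + \mathbb{P}(B \triangle \tilde B)\bigr].
\]

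The bulk of the work is bounding these symmetric differences uniformly in $k$. I would first upgrade Theorem~\ref{thm: geodesic_length} to an exponential bound linear in $L$: there exist $C_a, C_b > 0$ such that $\mathbb{P}(\mathrm{diam}\, G(y,z) \geq L) \leq C_a e^{-C_b L}$ whenever $L \geq M\|z - y\|_\infty$. This follows by combining Kesten's estimate \cite[Proposition~5.8]{aspects} on self-avoiding paths of length $\geq L$ with $T(\gamma) < aL$ (probability $\leq e^{-\CC_{17}L}$) with an exponential upper tail $\mathbb{P}(T(y,z) > aL) \leq e^{-cL}$ valid for $L \gg \|z-y\|_\infty$ under the moment assumption \eqref{eq: exp_assumption}, together with the trivial inequality $\mathrm{diam}\, G \leq \#G$.

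On $\{X_{in,(i+1)n} \neq \tilde X_{in,(i+1)n}\}$ the true geodesic from $inx$ to $(i+1)nx$ must reach first coordinate $> h$, forcing $\mathrm{diam}\, G(inx,(i+1)nx) > (k+m/2-i)n\|x\|_\infty$. Taking $m_0 = 2M$ makes this threshold at least $Mn\|x\|_\infty$, so the strengthened bound applies and
\[
\mathbb{P}(A \triangle \tilde A) \leq \sum_{i=0}^{k-1} C_a e^{-C_b(k+m/2-i)n\|x\|_\infty} \leq C' e^{-C_b m n\|x\|_\infty / 2},
\]
the geometric tail summing to a $k$-independent constant. The analogous bound on the $B$ side uses the infinite sum over $j \geq k+m$, which converges by the same mechanism. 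Substituting yields $\alpha_n(m) \leq \CC_{19} e^{-\CC_{20} m n\|x\|_\infty}$ with $\CC_{20} = C_b / 2$.

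The main obstacle is the $k$-uniformity. A naive union bound using only Theorem~\ref{thm: geodesic_length} as stated would leave a factor of $k$ in front of $e^{-\CC_{10} n\|x\|_\infty}$, useless as $k \to \infty$, and infinite on the $B$ side. The linear-in-$L$ strengthening of the diameter bound is precisely what makes the sums telescope geometrically, and extracting it from the exponential moment hypothesis \eqref{eq: exp_assumption} alone is the real technical point.
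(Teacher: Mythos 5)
Your decoupling scheme -- half-space truncation of the passage times, replacement of $A,B$ by independent events $\tilde A,\tilde B$, the symmetric-difference bound, and a summation over $i<k$ and $j\geq k+m$ that is geometric in $|k+m/2-i|$ and hence uniform in $k$ -- is essentially the paper's argument (the paper uses the half-space $\{y: y\cdot x<(k+m/2)nx\cdot x\}$ rather than a coordinate hyperplane, which is immaterial). The gap is in the one step you yourself single out as the technical core: the strengthened diameter bound $\mathbb{P}(\mathrm{diam}\,G(y,z)\geq L)\leq C_a e^{-C_b L}$ for $L\geq M\|z-y\|_\infty$. You propose to extract it from Kesten's path estimate together with an exponential upper tail for $T(y,z)$, and the latter needs \eqref{eq: exp_assumption}. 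But the proposition assumes only \eqref{eq: percolation_assumption}: no moment condition is available, and the proposition is precisely what is used to verify association condition II in Theorem~\ref{thm: main_fpp_thm}, where only \eqref{eq: percolation_assumption} (plus the weak moment \eqref{eq: fpp_moment}) is assumed. Under heavy-tailed weights the exponential upper tail genuinely fails -- e.g.\ $\mathbb{P}(T(y,z)\geq aL)\geq \mathbb{P}(t_e\geq aL)^{2d}$, which is only polynomially small for polynomial tails -- so as written your argument proves the mixing bound only under an extra hypothesis the statement does not grant; this is exactly the moment-free difficulty that Theorem~\ref{thm: geodesic_length} was engineered to circumvent.

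The paper closes this hole without upgrading Theorem~\ref{thm: geodesic_length}. Instead of asking the geodesic between $inx$ and $(i+1)nx$ (endpoint separation of order $n\|x\|$) to have diameter exponentially unlikely at the much larger scale $L\asymp (k+m/2-i)\,n\|x\|$, one re-bases: let $v$ be the first vertex at which the geodesic exits the ball of radius $\eta L$ centered at $inx$, with $\eta$ a small constant. The remaining piece is itself a geodesic $G(v,(i+1)nx)$ whose endpoint separation is of order $\eta L$ while its diameter is at least of order $(1-\eta)L/\sqrt d$, so for $\eta$ small and $m\geq m_0$ the theorem as stated applies to the pair $(v,(i+1)nx)$ and gives decay $e^{-cL}$; a union bound over the polynomially many candidate vertices $v$ on the sphere costs only a polynomial prefactor. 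With this substitution your summations over $i$ and $j$ go through verbatim, uniformly in $k$, under \eqref{eq: percolation_assumption} alone. (The same re-basing argument would also prove your strengthened diameter bound with no moment assumption, if you prefer to keep it as a standalone lemma.)
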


\begin{proof}
We may assume $x \neq 0$. Letting $m,n \geq 1$ and $k \geq 0$, we seek a bound on $|\mathbb{P}(A\cap B)-\mathbb{P}(A)\mathbb{P}(B)|$, where 
\begin{equation}\label{eq: A_B_choice}
A \in \sigma(X_{in, (i+1)n}:\, 0 \leq i < k) \text{ and } B \in \sigma(X_{in,(i+1)n}:\, k+m \leq i )\ .
\end{equation}
To do this, we will replace the $X_{m,n}$'s with passage times using edges only on one side of a hyperplane. Let $H_k:= \{y \in \mathbb{R}^d: y \cdot x < (k+m/2)n x\cdot x \}$ and, in the case $i \leq k$, define
\[
Y_{i}:= \inf_{\pi \in H_k} T(\pi)\ ,
\]
where the infimum is over lattice paths connecting $in x$ to $(i+1) n x$ with vertices in $H_k$. Similarly, 
when $i \geq k+m$, let
\[
Y_{i}:= \inf_{\pi \in \mathbb{Z}^d \setminus H_k} T(\pi)\ .
\]
We will bound the probability that $X_{in,(i+1)n} \neq Y_i$ for some $i$.

Define the event
\[
\Xi(k,m):= \left\{\text{there is some } i \in [0,k) \cup [k+m,\infty)\text{ such that } X_{in,(i+1)n} \neq Y_i\right\}
\]
and fix $\eta \in (0,1/2)$ (to be chosen later). The $\ell^2$-distance from $inx$ to the boundary of $H_k$ is $n|k+(m/2)-i|  \|x\|_2$, so on $\Xi(k,m)$, there is some $i$ such that a geodesic $\pi$ from $i n x$ to $(i+1) n x$ exits the ball $B_i^*$, where
\[
B_i^* = \left\{y \in \mathbb{R}^d : \|y-inx\|_2 \leq \eta n |k+m/2 -i| \|x\|_2 \right\}\ .
\]
Letting $v$ be the first vertex on $\pi$ in $(B_i^*)^c$, $\pi$ visits the boundary of $H_k$ after it visits $v$ (and therefore before $(i+1)nx$), so the segment of $\pi$ from $v$ to $(i+1) n x$ has diameter at least
\begin{equation}\label{eq:etaone}
(1-\eta)n\left|k+m/2 - i\right| \frac{\|x\|_2}{\sqrt d}\ .
\end{equation}
 \begin{figure}
\centering
 \includegraphics[scale=0.7]{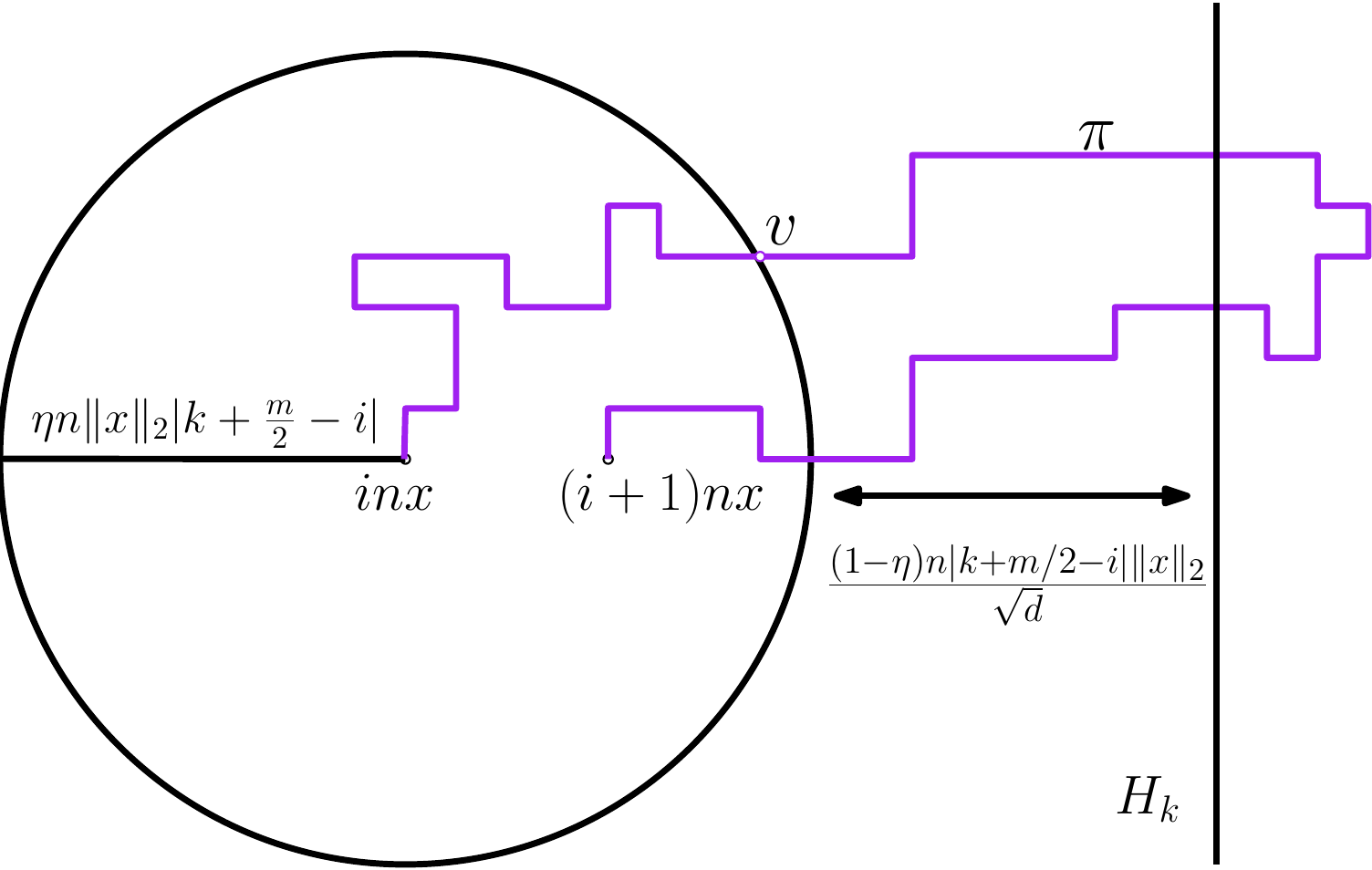}
 \caption{The event $\Xi(k,m)$: there exists $i$ such that the geodesic (in purple) from $inx$ to $(i+1)nx$ crosses the boundary of the half-space $H_k$. The segment of this geodesic from $v$ to $(i+1)nx$ has diameter at least $(1-\eta)n\left|k+m/2 - i\right| \frac{\|x\|_2}{\sqrt d}$.} 
 \label{fig02}
\end{figure}
(See Figure \ref{fig02} for an illustration.) Moreover,
\begin{equation}\label{eq:etatwo}
n( \eta  \left|k+m/2 - i \right|  - 1) \frac{\|x\|_2}{\sqrt d} \leq \|v-(i+1) n x\|_\infty \leq n ( \eta \left|k+m/2 - i \right|  + 1) \|x\|_2\ .
\end{equation}
Comparing \eqref{eq:etaone} and \eqref{eq:etatwo}, and using $|k+m/2-i| \geq m/2$ for $i \in [0,k] \cup [k+m,\infty)$, we see that if $\eta$ is sufficiently small and $m$ is larger than some $m_0$, then uniformly in $v,k, i, n$ and $x$,
\[
\text{diam }G(v,(i+1)nx) \geq M \|v - (i+1)n x\|_\infty \geq  \frac{M \eta}{2\sqrt{d}} \left| k+m/2 - i \right| n \|x\|_2\ , 
\]
where $M$ is from Theorem \ref{thm: geodesic_length}.
By a union bound, the probability that such a $v$ can be found for a fixed $i$ is bounded above by
\[
\CC_{21} \left(n\left|k+m/2-i\right| \|x\|_2 \right)^d \exp(-\CC_{22} n |k + m/2 -i| \|x\|_2 ) 
\]
for some $\CC_{21}, \CC_{22}>0$.
Thus, for all $k\geq 0,~m > m_0$ we obtain
\begin{align*}
\mathbb{P}(\Xi(k,m)) &\leq 2\CC_{21} \sum_{i=1}^\infty \left(n(m/2+i)\|x\|_2\right)^d \exp(-\CC_{22}n(m/2+i)\|x\|_2) \\
&\leq \CC_{19} \exp(-\CC_{20} mn\|x\|_2)\ .
\end{align*}

We use the above to produce a bound on $\alpha_n$. Let $k$ and $n$, as well as $m > m_0$, be arbitrary; let $A$ and $B$ be events as in \eqref{eq: A_B_choice}. Find Borel sets $E_A \subset \mathbb{R}^k$ and $E_B \subset \mathbb{R}^{\mathbb{N}}$ with 
\[
A = \{(X_{0,n}, \ldots, X_{(k-1)n,kn}) \in E_A\} \text{ and } B = \{(X_{(k+m)n,(k+m+1)n}, \ldots ) \in E_B\}\ .
\]
Define the corresponding events
\[
A' = \{(Y_0, \ldots, Y_{k-1}) \in E_A\} \text{ and } B' = \{(Y_{k+m}, \ldots) \in E_B\}
\]
and note that since they depend on the states of disjoint sets of edges, 
\[
|\mathbb{P}(A'\cap B') - \mathbb{P}(A')\mathbb{P}(B')| = 0\ .
\]
On $\Xi(k,m)$, the events $A$ and $A'$ are equal (and similarly for $B$ and $B'$). Therefore
\[
|\mathbb{P}(A\cap B) - \mathbb{P}(A' \cap B')| \leq \mathbb{P}(\Xi(k,m)) \leq \CC_{19} \exp(-\CC_{20}mn\|x\|_2)
\]
and
\begin{align*}
|\mathbb{P}(A)\mathbb{P}(B) - \mathbb{P}(A')\mathbb{P}(B')| & \leq \mathbb{P}(A) |\mathbb{P}(B)-\mathbb{P}(B')| + \mathbb{P}(B')|\mathbb{P}(A)-\mathbb{P}(A')| \\
&\leq |\mathbb{P}(B)-\mathbb{P}(B')| + |\mathbb{P}(A)-\mathbb{P}(A')| \\
&\leq 2\CC_{19}\exp(-\CC_{20}mn\|x\|_2)\ .
\end{align*}
Combining these two inequalities completes the proof.
\end{proof}

The above mixing result will be sufficient for \eqref{eq:covcond}. We will use a version of it proved below to verify association condition II. For this, we must make a moment assumption on $t_e$. So assume that
\begin{equation}\label{eq: fpp_moment}
\mathbb{E}t_e^{\frac{1}{d} + \eta}<\infty \text{ for some }\eta>0\ .
\end{equation}
This is sufficient to establish $\mathbb{E}Z^{2+d\eta}<\infty$, where $Z$ is the minimum of $2d$ i.i.d. variables distributed as $t_e$. By \cite[Lemma~3.1]{CD}, $\mathbb{E}T(x,y)^{2+d\eta}<\infty$ for all $x,y \in \mathbb{Z}^d$.

\begin{prop}\label{prop: cov_fpp} 
Assuming \eqref{eq: percolation_assumption} and \eqref{eq: fpp_moment}, there are constants $\CC_{23}, \CC_{24}>0$ such that for all $n\geq 1$, $x \in \mathbb{Z}^d$, and $r \geq m_0+1$,
\[
\max_{i \geq 1} \sum_{j : |i-j| \geq r}^\infty \mathrm{Cov} (X_{in,(i+1)n} X_{jn,(j+1)n}) \leq \CC_{23} \exp(-\CC_{24}nr \|x\|_\infty)\ .
\]
\end{prop}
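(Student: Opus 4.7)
The plan is to convert the exponential mixing bound from the preceding proposition into a covariance bound using a standard Davydov-type inequality (this is the content of Lemma~\ref{lem:orthogcontrol}, which the excerpt has already announced as the tool for decoupling pairs $X_{in,(i+1)n}, X_{jn,(j+1)n}$ under a mixing hypothesis). The inequality has the schematic form
\[
|\Cov(Y,Z)| \leq C\, \alpha^{1 - 2/p}\, \|Y\|_p \|Z\|_p
\]
whenever $Y$ is measurable with respect to one $\sigma$-algebra, $Z$ with respect to another, $\alpha$ is their mixing coefficient, and $\|Y\|_p,\|Z\|_p<\infty$ with $p>2$. I would apply this with $p = 2+d\eta$, where $\eta>0$ is furnished by the moment assumption \eqref{eq: fpp_moment}.

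First I would record that, under \eqref{eq: fpp_moment}, the cited result \cite[Lemma~3.1]{CD} gives $\|X_{0,n}\|_{2+d\eta}<\infty$. Moreover, by sub-additivity, $T(0,nx) \leq \sum_{k=0}^{n-1} T(kx,(k+1)x)$, so Minkowski yields $\|X_{0,n}\|_{2+d\eta} \leq n \|T(0,x)\|_{2+d\eta}$, a polynomial bound in $n$. (Thus also $\|X_{0,n} - \mathbb{E}X_{0,n}\|_{2+d\eta}$ grows at most polynomially.) Second, I would invoke Lemma~\ref{lem:orthogcontrol} applied to $Y = X_{in,(i+1)n}$ and $Z = X_{jn,(j+1)n}$, noting that these are measurable with respect to the $\sigma$-algebras that define $\alpha_n$ with gap index $|i-j|-1$. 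Combining this with the bound $\alpha_n(|i-j|-1) \leq \CC_{19} e^{-\CC_{20}(|i-j|-1) n \|x\|_\infty}$ from the previous proposition, and writing $\theta = 1 - 2/(2+d\eta) = d\eta/(2+d\eta) \in (0,1)$, one obtains
\[
|\Cov(X_{in,(i+1)n}, X_{jn,(j+1)n})| \leq C'\, n^{2}\, e^{-\CC_{20}\theta(|i-j|-1) n \|x\|_\infty}
\]
for a constant $C'$ depending on $\|T(0,x)\|_{2+d\eta}$.

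Third, I would sum over $j$ with $|i-j| \geq r \geq m_0+1$: by the geometric series estimate,
\[
\sum_{j:|i-j|\geq r} e^{-\CC_{20}\theta(|i-j|-1) n \|x\|_\infty} \leq \frac{2\, e^{-\CC_{20}\theta (r-1) n \|x\|_\infty}}{1-e^{-\CC_{20}\theta n \|x\|_\infty}} \leq C'' e^{-\CC_{20}\theta r n \|x\|_\infty}
\]
uniformly in $n \geq 1$ and nonzero $x \in \mathbb{Z}^d$. Finally, I would absorb the polynomial prefactor $n^{2}$ into the exponential by sacrificing an arbitrarily small fraction of the decay rate: for any $\CC_{24} < \CC_{20}\theta$ there is $\CC_{23}$ with $n^{2} e^{-\CC_{20}\theta n r \|x\|_\infty} \leq \CC_{23} e^{-\CC_{24} n r \|x\|_\infty}$ for all $n,r \geq 1$ and $x \neq 0$. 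This delivers the bound claimed in the proposition.

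The only real subtlety is making sure the decoupling lemma applies uniformly in $n$ with the moment being the $(2+d\eta)$-norm (as opposed to requiring stronger integrability that would depend on $n$); this is where the polynomial growth of $\|X_{0,n}\|_{2+d\eta}$ is essential, but since the mixing rate is exponential in $n$ itself, we have enormous slack and the absorption step is harmless. One minor bookkeeping point: if $x=0$ the statement is trivial, and the restriction $r \geq m_0+1$ is exactly what allows us to invoke the bound on $\alpha_n(|i-j|-1)$ from the previous proposition.
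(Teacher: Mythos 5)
Your proposal is correct and follows essentially the same route as the paper: apply the Davydov inequality (Lemma~\ref{lem:orthogcontrol}) with $p=q=2+d\eta$, feed in the exponential bound on $\alpha_n$ from the preceding proposition, control $\|T(0,nx)\|_{2+d\eta}$ polynomially via subadditivity, sum the resulting geometric series in $|i-j|$, and absorb the polynomial prefactor into the exponential. The only bookkeeping point to tighten is uniformity in $x$: your constant $C'$ depends on $\|T(0,x)\|_{2+d\eta}$, so one should further bound $\|T(0,x)\|_{2+d\eta}\leq \|x\|_1\,\|T(0,e_1)\|_{2+d\eta}$ (exactly as the paper does) and absorb the extra polynomial factor in $\|x\|_1$ into the exponential along with $n^2$.
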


\begin{proof}
For the proof we require a lemma \cite[Eq.~(2.2)]{Dav}.

\begin{lem}[Davydov]
\label{lem:orthogcontrol}
Let $f$ be a $\sigma(X_{in, (i+1)n}:\, 0 \leq i < k)$-measurable random variable and let $g$ be a $\sigma(X_{in,(i+1)n}:\, k+m \leq i )$-measurable random variable. Assume that $\mathbb{E}|f|^p$ and $\E |g|^q$ are finite for nonnegative $p,q$ such that $1/p + 1/q < 1$. Then \[
|\E fg - \E f \E g| \leq 12 \|f\|_p \|g\|_{q} \alpha_n(m)^{1 - 1/p - 1/q}\ .
\]
\end{lem}

We apply this lemma with $p=q=2+\delta$, where $\delta = d\eta$. This makes $1-1/p-1/q = \frac{\delta}{2+\delta}$, so if $|i-j| = m \geq m_0+1$,
\begin{align*}
\text{Cov} (X_{in,(i+1)n}, X_{jn,(j+1)n}) &\leq 12 \|X_{0,n}\|_{2+\delta}^2 \alpha_n(m)^{\frac{\delta}{2+\delta}} \\
&\leq \CC_{25} \|T(0,nx)\|_{2+\delta}^2 \exp(-\CC_{26} mn\|x\|_\infty)\ 
\end{align*}
for some $\CC_{25}, \CC_{26}>0$.
Let $\pi = x_0, \ldots, x_{n\|x\|_1}$ be a deterministic lattice path of $n\|x\|_1$ number of edges from $0$ to $nx$ and use translation invariance for
\[
\|T(0,nx)\|_{2+\delta} \leq \sum_{k=0}^{n\|x\|_1-1} \|T(x_k,x_{k+1})\|_{2+\delta} = n\|x\|_1 \|T(0,e_1)\|_{2+\delta}\ .
\]
So the above covariance is bounded by
\[
\CC_{27} n^2\|x\|_\infty^2 \exp(-\CC_{26}mn\|x\|_\infty) \leq \CC_{28}\exp(-\CC_{29}mn\|x\|_\infty)\ .
\]
Summing over $m \geq r$ completes the proof.
\end{proof}

Now we can prove Theorem~\ref{thm: main_fpp_thm}.
\begin{proof}[Proof of Theorem~\ref{thm: main_fpp_thm}]
Assume that for some $\delta>0$, $\chi:= \underline{\chi}_2 = \overline{\chi}_{2+\delta}\leq 1/2$. This implies that $\|T(0,nx)\|_{2+\delta}<\infty$ for some $n$ and, since $T(0,nx) \geq \min\{t_{e_1}, \ldots, t_{e_{2d}}\}$, the minimum of the $2d$ edges with endpoint 0, we see that assumption \eqref{eq: fpp_moment} must hold for some $\eta>0$. Therefore $\|T(0,nx)\|_{2+\delta}<\infty$ for all $n$. 

We first show that if the distribution of $t_e$ is non-degenerate, then $\Var T(0,x) \neq 0$ for all $x\neq 0$. Let 
\begin{align*}
A_l &= \{t_e \in A \text{ for all } e \text{ with endpoints in } B(l)\} \\
B_l &= \{t_e \in B \text{ for all } e \text{ with endpoints in } B(l)\}\ ,
\end{align*}
where $A$ and $B$ are two bounded subsets of $\mathbb{R}$ such that $a:= \sup A < \inf B =: b$ and $\mathbb{P}(t_e \in A)>0$, $\mathbb{P}(t_e \in B)>0$. Then for $l= \|x\|_\infty$,
\[
\mathbb{P}(T(0,x) \geq b\|x\|_1) \geq \mathbb{P}(B_l) > 0 \text{ and } \mathbb{P}(T(0,x) \leq a\|x\|_1) \geq \mathbb{P}(A_l)>0\ ,
\]
so the distribution of $T(0,x)$ is non-degenerate.

Since $\{X_{m,n}\}$ satisfies assumptions 1-3, we need only verify association condition II to apply Theorem~\ref{thm:main1}. By the Harris-FKG inequality \cite[Theorem~2.15]{BLM} any coordinatewise increasing functions $f_1,f_2:[0,\infty)^{\mathcal{E}^d} \to \mathbb{R}$ satisfy $\text{Cov}(f_1,f_2)\geq 0$ whenever this covariance exists. Since $T(x,y)$ is an increasing function of the edge weights for each $x,y$, this shows that the variables $\{X_{0,n}, X_{n,2n}, \ldots\}$ are positively associated for each $n$.

To show \eqref{eq:covcond}, note that we can assume $\chi > -\infty$, so that $\Var T(0,nx) \geq n^{-\CC_{30}}$ for all $n$ and some constant $\CC_{30}$. Therefore if $r \geq m_0+1$, use Proposition~\ref{prop: cov_fpp} for
\begin{align*}
\frac{\max_{i \geq 1} \sum_{j : |i-j| \geq r} \mathrm{Cov} (X_{in,(i+1)n} X_{jn,(j+1)n})}{\Var X_{0,n}} &\leq \CC_{23}n^{\CC_{30}} \exp\left( - \CC_{24}nr\|x\|_\infty \right) \\
&\leq \CC_{31}\exp \left( -\CC_{32}nr\|x\|_\infty \right)\ .
\end{align*}
So we define
\begin{equation}\label{eq: u_def}
u(r) = \begin{cases}
\CC_{31} \exp \left( -\CC_{32} r\|x\|_\infty \right) & \text{ if } r \geq m_0+1 \\
2m_0+1 + \CC_{31} & \text{ otherwise}
\end{cases}\ ,
\end{equation}
and \eqref{eq:covcond} holds.
\end{proof}

\subsubsection{Proof of Theorem~\ref{thm: second_fpp_thm}}\label{sec: second_fpp}

The proof is similar to that of Theorem~\ref{thm:main1} in the case $\Var X_{0,n} = O(n/(\log n)^\beta)$. We will set $\beta=1$, due to \cite[Theorem~1.1]{DHS14}:
\begin{lem}[Damron-Hanson-Sosoe]
Let $d\geq 2$. Under \eqref{eq: percolation_assumption} and \eqref{eq: exp_assumption}, there exist $c_1,c_2>0$ such that for all $x \in \mathbb{Z}^d$ with $\|x\|_1 > 1$,
\[
\mathbb{P}\left( |T(0,x) - \mathbb{E}T(0,x)| \geq \frac{\|x\|_1^{1/2}}{(\log \|x\|_1)^{1/2}} \lambda \right) \leq c_1 e^{-c_2 \lambda} \text{ for } \lambda \geq 0\ .
\]
\end{lem}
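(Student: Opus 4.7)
The plan is to establish the moment generating function bound
$$\log \mathbb{E}\exp\!\bigl[\mu\bigl(T(0,x)-\mathbb{E}T(0,x)\bigr)\bigr] \leq \frac{C \mu^2 \|x\|_1}{\log \|x\|_1}, \quad \mu \in [0,\mu_0],$$
with $\mu_0 \asymp (\log \|x\|_1)^{-1}$, from which the claimed exponential tail follows by a standard Chernoff optimization. The three main ingredients are a modified logarithmic Sobolev inequality for functions of independent random variables, the Benjamini--Kalai--Schramm (BKS) averaging technique, and a careful truncation of the unbounded weights.

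First I would apply the Boucheron--Lugosi--Massart entropy inequality to $f=T(0,x)$, viewed as a function of $(t_e)_{e \in \mathcal{E}^d}$. Let $t_e'$ denote an independent copy of $t_e$ and $T^{(e)}$ the passage time with $t_e$ replaced by $t_e'$. The geodesic representation yields $(T-T^{(e)})_+ \leq t_e \mathbf{1}\{e \in \gamma\}$ for any geodesic $\gamma$ of $T$, and the symmetric bound in the opposite direction, so
$$\mathrm{Ent}(e^{\mu T}) \leq C\mu^2 \, \mathbb{E}\!\left[e^{\mu T}\sum_{e \in \gamma^\ast}\bigl(t_e^2 + (t_e')^2\bigr)\right].$$
Combined with Theorem \ref{thm: geodesic_length}, which confines $\gamma^\ast$ to a box of side $O(\|x\|_1)$ off a super-polynomially rare event, and with the hypothesis $\mathbb{E}e^{\alpha t_e}<\infty$, this already yields diffusive concentration at scale $\sqrt{\|x\|_1}$, but misses the logarithmic improvement.

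The logarithmic gain is the technical heart of the proof and comes from BKS-style averaging. Rather than working with $T(0,x)$ directly, I would introduce the averaged observable $\tilde T = |Y|^{-1}\sum_{y \in Y} T(y,x+y)$, where $Y$ is a cube of side $m = \|x\|_1^\delta$ for a small $\delta > 0$. Averaging depresses the $L^1$ norms of the edge influences $\nabla_e \tilde T$ relative to their $L^2$ norms, which enables Talagrand's $L^1$--$L^2$ (or Falik--Samorodnitsky) inequality, applied after dyadic decomposition of the weights into Bernoulli variables, to produce an extra factor $1/\log m \asymp 1/\log \|x\|_1$ in the total squared influence appearing on the right-hand side of the entropy inequality. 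The contribution of $T - \tilde T$ is controlled separately by translation invariance together with the finite exponential moments.

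The principal obstacle, beyond the BKS mechanism itself, is preserving the logarithmic gain for unbounded weights, since the entropy method is natively suited to bounded variables. The remedy is a two-scale truncation: work on the event $\mathcal{E} = \{\max_{e \in B(R)} t_e \leq M\}$ with $R$ a constant multiple of $\|x\|_1$ and $M = K\alpha^{-1}\log \|x\|_1$; on $\mathcal{E}$ the truncated weights $\tilde t_e = t_e \wedge M$ leave $T(0,x)$ unchanged with high probability (since geodesics typically avoid heavy edges, by Theorem \ref{thm: geodesic_length} combined with a union bound), while $\mathbb{P}(\mathcal{E}^c) \leq R^d e^{-\alpha M} \leq \|x\|_1^{d-K}$ is negligible for $K$ large. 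Integrating Herbst's argument for $\mu \in [0, c/M]$ and optimizing the Chernoff bound at $\mu \asymp \lambda (\log\|x\|_1/\|x\|_1)^{1/2}$ then yields $c_1 e^{-c_2\lambda}$ tails at scale $\sqrt{\|x\|_1/\log\|x\|_1}$, as claimed.
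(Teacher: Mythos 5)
You should first be aware that the paper contains no proof of this lemma: it is imported wholesale as \cite[Theorem~1.1]{DHS14}, so the only ``proof'' in the paper is the citation. Measured against the actual source, your outline follows the same route that \cite{DHS14} takes — a modified log-Sobolev/entropy bound for the moment generating function, Benjamini--Kalai--Schramm averaging of $T(y,x+y)$ over a box of polynomially smaller side, a Falik--Samorodnitsky/Bernoulli-encoding step to turn the $L^1$--$L^2$ influence comparison into a $1/\log\|x\|_1$ gain, truncation of the weights at height of order $\log\|x\|_1$ using \eqref{eq: exp_assumption}, and then Herbst plus Chernoff. So the strategy is the right one and is consistent with the work the paper relies on.

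As a proof, however, this is a program rather than an argument, and the sentence in which you claim the logarithmic gain is precisely where the cited paper's difficulty lives. The $1/\log$ improvement via averaging is comparatively routine at the level of the \emph{variance} (that is \cite{DHS}); what Theorem 1.1 of \cite{DHS14} requires is the same gain inside $\mathrm{Ent}(e^{\mu \tilde T})$, i.e.\ the bound $\log\mathbb{E}e^{\mu(\tilde T-\mathbb{E}\tilde T)}\le C\mu^2\|x\|_1/\log\|x\|_1$ on a $\mu$-range compatible with the truncation level, and this does not follow formally from the entropy inequality you display (which, as you note, only yields the diffusive scale) together with the phrase ``apply Talagrand's $L^1$--$L^2$ inequality after dyadic decomposition.'' Controlling the exponentially weighted influence terms for unbounded weights, keeping the gain uniform over the admissible $\mu$-interval, and handling the lower tail (your MGF bound is stated only for $\mu\ge 0$) are exactly the technical content of \cite{DHS14} and are asserted rather than carried out here. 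Two smaller points: the discrete-gradient bound should involve the geodesic of the \emph{modified} configuration, $(T-T^{(e)})_+\le (t_e-t_e')_+\mathbf{1}\{e\in\gamma^{(e)}\}$, not an arbitrary geodesic of $T$; and the comparison of $T$ with the averaged quantity $\tilde T$ needs a genuine tail estimate at scale $\|x\|_1^\delta$ (via Theorem~\ref{thm: geodesic_length} and concentration at the smaller scale), not just ``translation invariance with exponential moments.'' In short: correct approach, matching the cited source, but the decisive steps are missing.
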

Given a nonzero $x \in \mathbb{Z}^d$, we define as before $X_{m,n} = T(mx,nx)$ and note that by the above lemma, one has
\begin{equation}\label{eq: variance_fpp}
\Var X_{0,n} = O\left( \frac{n}{\log n} \right) \text{ and } \overline{\chi}_p \leq 1/2 \text{ for all } p > 0\ .
\end{equation}
We need one more result, which is \cite[Eq.~(1.13)]{Kesten}.
\begin{lem}[Kesten]\label{lem: kesten_lower_bound}
Assume \eqref{eq: percolation_assumption} holds and that the distribution of $t_e$ is not concentrated on one point. If $\mathbb{E}t_e^2<\infty$ then for some $C>0$, $\Var T(0,ne_1) \geq C$ for all $n$.
\end{lem}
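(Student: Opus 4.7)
The plan is to show that $T(0,ne_1)$ has mass on two windows separated by a fixed positive distance, uniformly in $n$, and then pass from this to a uniform variance lower bound.

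\textbf{Setup.} Since the distribution of $t_e$ is non-degenerate, fix $0 \leq a < b$ and $\eta > 0$ with $\mathbb{P}(t_e \leq a) \geq \eta$ and $\mathbb{P}(t_e \geq b) \geq \eta$. Let $\mathcal{E}_0$ denote the set of $2d$ edges incident to the origin, and for each neighbor $v$ of $0$ define the restricted passage time
\[
\tilde T(v,ne_1) = \inf\{T(\pi):\pi\text{ a self-avoiding path from }v\text{ to }ne_1\text{ avoiding }0\}.
\]
Put $M = \min_{v\sim 0} \tilde T(v,ne_1)$. Since $M$ is determined by edges outside $\mathcal{E}_0$, it is independent of the family $(t_e)_{e\in\mathcal{E}_0}$.

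\textbf{Sandwich inequality.} Every self-avoiding path from $0$ to $ne_1$ uses exactly one edge of $\mathcal{E}_0$ (and restricting to self-avoiding paths does not change the infimum defining $T$, since $t_e\geq 0$), so
\[
T(0,ne_1)=\min_{v\sim 0}\bigl[t_{\{0,v\}}+\tilde T(v,ne_1)\bigr].
\]
On the event $A=\{t_e\leq a:\ e\in\mathcal{E}_0\}$ this gives $T(0,ne_1)\leq a+M$, while on $B=\{t_e\geq b:\ e\in\mathcal{E}_0\}$ it gives $T(0,ne_1)\geq b+M$.

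\textbf{Two windows of mass.} Let $m$ be a median of $M$ (well-defined for each $n$; its value may depend on $n$, but that is harmless). Independence of $A,B$ from $M$ yields
\[
\mathbb{P}(T(0,ne_1)\leq a+m)\geq\mathbb{P}(A)\mathbb{P}(M\leq m)\geq\tfrac{\eta^{2d}}{2},
\]
\[
\mathbb{P}(T(0,ne_1)\geq b+m)\geq\mathbb{P}(B)\mathbb{P}(M\geq m)\geq\tfrac{\eta^{2d}}{2}.
\]

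\textbf{From two windows to variance.} I will then invoke the elementary inequality: if $X$ has finite second moment with $\mathbb{P}(X\leq\alpha)\geq p$ and $\mathbb{P}(X\geq\beta)\geq q$, $\alpha<\beta$, then $\Var X\geq\min(p,q)(\beta-\alpha)^2/4$. This is proved by splitting on whether $\mathbb{E}X\leq(\alpha+\beta)/2$ or not, and using $\Var X\geq(\beta-\mathbb{E}X)^2\mathbb{P}(X\geq\beta)$ in the first case and $\Var X\geq(\mathbb{E}X-\alpha)^2\mathbb{P}(X\leq\alpha)$ in the second. Applied with $\alpha=a+m$, $\beta=b+m$ and $p=q=\eta^{2d}/2$,
\[
\Var T(0,ne_1)\geq\frac{\eta^{2d}(b-a)^2}{8}=:C>0,
\]
uniformly in $n$. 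Finiteness of $\Var T(0,ne_1)$ uses $\mathbb{E}t_e^2<\infty$ combined with the moment bound \cite[Lemma~3.1]{CD}. There is no serious obstacle in this argument; the proof is essentially a quantitative version of the intuition that non-degenerate i.i.d.\ edge weights produce a non-degenerate passage time in every direction, with the key structural input being that only the $2d$ edges at the origin need to be controlled to decouple $T(0,ne_1)$ from an independent restricted passage time.
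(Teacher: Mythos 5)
Your proof is correct: the decomposition $T(0,ne_1)=\min_{v\sim 0}\bigl[t_{\{0,v\}}+\tilde T(v,ne_1)\bigr]$ is valid (a self-avoiding path from $0$ uses exactly one edge at the origin, and loop-erasure reduces the infimum to self-avoiding paths), $M$ is indeed independent of the $2d$ edge weights at the origin, and the median/two-window argument gives a variance bound uniform in $n$. Note that the paper does not prove this lemma at all -- it imports it as Eq.~(1.13) of Kesten's 1993 paper -- and your argument is essentially the standard one underlying Kesten's statement (decoupling the passage time from the weights at the origin and exploiting the non-degeneracy of $t_e$ there), so it serves as a correct self-contained substitute; the only cosmetic remark is that finiteness of $\Var T(0,ne_1)$ follows already from bounding $T(0,ne_1)$ by the passage time of a deterministic path of $n$ edges, without needing the cited moment lemma.
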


\noindent
Although this lemma is stated for the passage time in the $e_1$ direction, its proof immediately gives
\begin{equation}\label{eq: var_lower_bound}
\Var T(0,nx) \geq C > 0 \text{ for all nonzero } x \in \mathbb{Z}^d \text{ and } n \geq 1\ .
\end{equation}

Last, by our assumption $\mathbb{E}e^{\alpha t_e}<\infty$, \eqref{eq: fpp_moment} holds for all $\eta>0$ and therefore we can apply Proposition~\ref{prop: cov_fpp}, along with \eqref{eq: var_lower_bound} to bound, as in the last proof,
\[
\frac{\max_{i \geq 1} \sum_{j : |i-j| \geq r} \mathrm{Cov} (X_{in,(i+1)n} X_{jn,(j+1)n})}{\Var X_{0,n}} \leq \CC_{33}\exp \left( -\CC_{34}nr\|x\|_\infty \right)\ .
\]
This shows association condition II with $u(r)$ defined as in \eqref{eq: u_def}.

Given these tools we proceed with the proof of Theorem~\ref{thm: second_fpp_thm}. So assume for a contradiction that $\overline{\gamma} < -1/2$. By \eqref{eq: var_lower_bound}, for any $\epsilon>0$ small enough, there exists $N \geq 2$ such that
\[
(\mathbb{E}T(0,nx) - ng(x))^2 < n^{-1 - \epsilon} \Var T(0,nx) \text{ for } n \geq N\ .
\]
Next pick $\delta$ so large that
\begin{equation}\label{eq: delta_condition}
1/2 < (1+\epsilon/2)\left(\frac{1}{2} - \frac{1}{2+\delta} \right)\ .
\end{equation}
Setting
\[
k_n = \min \left\{ \left\lceil \frac{\Var T(0,nx)}{(\mathbb{E}T(0,nx) - ng(x))^2} n^{-\epsilon/2} \right\rceil, \left\lceil n^{1+\epsilon/2} \right\rceil \right\}\ ,
\]
and $\Delta(\delta) = \frac{1}{2} - \frac{1}{2+\delta}$, one has
\begin{equation}\label{eq: k_n_lower_bound}
k_n^\Delta \geq n^{(1 + \epsilon/2)\Delta} \text{ for } n \geq N\ .
\end{equation}
So using \eqref{eq: variance_fpp}, \eqref{eq: var_lower_bound} and \eqref{eq: delta_condition},
\[
\frac{\|T(0,nx) - \mathbb{E}T(0,nx)\|_{2+\delta}}{\|T(0,nx) - \mathbb{E}T(0,nx)\|_2} = o(k_n^\Delta)\ .
\]
We may now apply Corollary~\ref{corollary: main_cor} to deduce that with $l_n=n$ and any positive $a<1/2$, there is an $N' \geq N$ such that
\[
\Var T(0,k_n n x) \geq a k_n \Var T(0,nx) \text{ for } n \geq N'
\]
or, setting $v(0,n) = \frac{1}{n} \Var T(0,nx)$,
\[
v(k_n n) \geq a v(n) \text{ for } n \geq N'\ .
\]

Now iterate this bound, defining a sequence $(n_j)$ by $n_1=N'$ and $n_{j+1} = k_{n_j}n_j$ for $j \geq 1$. Then
\begin{equation}\label{eq: n_j_condition_2}
n_{j+1} \geq n_j^{2+\epsilon/2} \text{ for all } j \geq 1\ ,
\end{equation}
so $n_j \to \infty$ with $n_j \geq N'$ for all $j \geq 1$. This gives $v(n_{j+1}) \geq a v(n_j)$ for all $j \geq 1$. Set
\[
A = \frac{\log (1/a)}{\log (2+\epsilon/2)} > 0
\]
and as in \eqref{eq: pizza_pie},
\[
\frac{\Var T(0,n_{j+1}x)}{\frac{n_{j+1}}{(\log n_{j+1})^A}} \geq \frac{\Var T(0,N'x)}{\frac{N'}{(\log N')^A}} > 0 \text{ for all } j \geq 1\ .
\]
Due to \eqref{eq: variance_fpp}, $A \geq 1$, meaning
\[
\log(1/a) \geq \log(2+\epsilon/2)\ .
\]
Take $a \uparrow 1/2$ to get a contradiction.

\qed

\appendix
\section{CLT under mixing conditions}\label{sec: mixing_clt}

In this appendix, we present the proof of Lemma~\ref{lem: alpha_CLT}. Recall that if $\Sigma_1$ and $\Sigma_2$ are sigma-algebras, we write $\alpha(\Sigma_1,\Sigma_2) = \sup_{A \in \Sigma_1, B \in \Sigma_2} | \mathbb{P}(A\cap B)-\mathbb{P}(A)\mathbb{P}(B)|$.

\begin{thm}[See \cite{brad}, Theorem 1.14]
\label{thm:baseclt}
Suppose for each positive integer $n$, the following four statements hold (where $\mathbb{R}$ is taken to have the Borel sigma-algebra):
\begin{enumerate}
\item $k(n)$ is a positive integer.
\item $X_1^{(n)},\,\ldots,\,X_{k(n)}^{(n)}$ are real-valued random variables.
\item $Y_1^{(n)},\,\ldots,\,Y_{k(n)}^{(n)}$ are  independent real-valued random variables.
\item For each $k = 1, \,2,\, \ldots, \, k(n)$, the two random variables $X_k^{(n)}$ and $Y_k^{(n)}$ have the same distribution.
\end{enumerate}
Suppose also that
\[\lim_n \sum_{k=1}^{k(n)-1} \alpha \left(\sigma(X_j^{(n)},\,1 \leq j \leq k), \, \sigma(X_{k+1}^{(n)})\right) = 0. \]
Finally, suppose $\mu$ is a probability measure on $\mathbb{R}$. The following two statements are equivalent:
\begin{enumerate}
\item[(a)] $X_1^{(n)} + \ldots + X_{k(n)}^n \Rightarrow \mu$ as $n \rightarrow \infty$.
\item[(b)] $Y_1^{(n)} + \ldots + Y_{k(n)}^n \Rightarrow \mu$ as $n \rightarrow \infty$.
\end{enumerate}
\end{thm}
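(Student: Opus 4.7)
The plan is to compare the two sums at the level of characteristic functions, use the $\alpha$-mixing hypothesis to show that they are asymptotically identical, and then invoke L\'evy's continuity theorem. I would set $\phi_n(t) = \E \exp\bigl(it(X_1^{(n)} + \cdots + X_{k(n)}^{(n)})\bigr)$ and $\psi_n(t) = \E \exp\bigl(it(Y_1^{(n)} + \cdots + Y_{k(n)}^{(n)})\bigr)$. Since the $Y_k^{(n)}$'s are independent and $Y_k^{(n)} \stackrel{d}{=} X_k^{(n)}$,
\[
\psi_n(t) = \prod_{k=1}^{k(n)} \E\bigl[e^{it X_k^{(n)}}\bigr].
\]
It will suffice to show $|\phi_n(t) - \psi_n(t)| \to 0$ for each fixed $t \in \R$. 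L\'evy's theorem then gives that $\phi_n(t) \to \hat\mu(t)$ pointwise if and only if $\psi_n(t) \to \hat\mu(t)$ pointwise, which is the equivalence of (a) and (b).

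The analytic workhorse I would use is the standard $\alpha$-mixing covariance inequality: if $U$ is $\Sigma_1$-measurable and $V$ is $\Sigma_2$-measurable, both complex-valued with $|U|, |V| \leq 1$, then for an absolute constant $C$,
\[
\bigl|\E[UV] - \E[U]\E[V]\bigr| \leq C\, \alpha(\Sigma_1, \Sigma_2).
\]
The real-valued case is classical (approximate by simple functions and use the definition of $\alpha$), and the complex case follows by splitting into real and imaginary parts.

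The heart of the proof will be a telescoping argument. I would define
\[
\phi_n^{(j)}(t) = \E\!\left[\prod_{k=1}^{j} e^{it X_k^{(n)}}\right] \cdot \prod_{k=j+1}^{k(n)} \E\bigl[e^{it X_k^{(n)}}\bigr],
\]
so that $\phi_n^{(0)} = \psi_n$ and $\phi_n^{(k(n))} = \phi_n$. Applying the mixing inequality with $U = \prod_{j=1}^{k} e^{itX_j^{(n)}}$ and $V = e^{itX_{k+1}^{(n)}}$, and using that the trailing factor $\prod_{j=k+2}^{k(n)} \E[e^{itX_j^{(n)}}]$ has modulus at most $1$, I would bound the telescope increment by
\[
\bigl|\phi_n^{(k+1)}(t) - \phi_n^{(k)}(t)\bigr| \leq C\, \alpha\bigl(\sigma(X_j^{(n)},\, 1 \leq j \leq k),\, \sigma(X_{k+1}^{(n)})\bigr).
\]
Summing over $k$ yields
\[
\bigl|\phi_n(t) - \psi_n(t)\bigr| \leq C \sum_{k=1}^{k(n)-1} \alpha\bigl(\sigma(X_j^{(n)},\, 1 \leq j \leq k),\, \sigma(X_{k+1}^{(n)})\bigr),
\]
which tends to $0$ by hypothesis.

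The main obstacle is purely notational: ensuring that the $\sigma$-algebras arising at each telescope step line up precisely with those in the hypothesis. The factoring above is designed so that peeling off $e^{itX_{k+1}^{(n)}}$ splits the product into a function of $(X_1^{(n)}, \ldots, X_k^{(n)})$ and a function of $X_{k+1}^{(n)}$ alone, giving exactly the mixing coefficient that appears in the summability assumption. No moment estimate or tightness argument will be needed, since $|e^{itX_k^{(n)}}|\leq 1$ automatically, and no assumption on the rate of decay of $\alpha$ beyond summability of one-step coefficients is used.
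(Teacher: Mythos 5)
Your proof is correct, and it is the standard argument for this result: the paper itself does not prove the theorem but quotes it from Bradley's book (Theorem 1.14), whose proof likewise proceeds by comparing characteristic functions via the bounded-covariance inequality $|\E UV-\E U\E V|\leq C\,\alpha$ and a telescoping product, then invoking L\'evy continuity. The telescoping identity, the measurability bookkeeping, and the use of $|e^{itX}|\leq 1$ in your write-up all check out, so no gap remains.
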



\begin{thm}[\cite{sotmal} as formulated in \cite{kuelphil}]
\label{thm:momineq}
Let $\{\eta_n,\,n\geq 1\}$ be a sequence of mean-zero random variables with $2 + \delta$th moment (with $0 < \delta \leq 1$) uniformly bounded by $1$. 
Let
\[\alpha(n) = \sup_k \sup_{A,B} \left| \mathbb{P}(A\cap B) - \mathbb{P}(A) \mathbb{P}(B)\right|, \]
where the latter supremum is over $A \in \sigma(\eta_1, \ldots, \eta_k)$ and $B \in \sigma(\eta_{k+n}, \ldots)$.
Suppose that there exists some $C_\eta >0$ and  $\varepsilon$ such that, for all $n$, $\alpha(n) \leq C_\eta n^{-(1+\varepsilon)(1 + 2/\delta)}$. 
There exists an $\alpha \in (0,\delta]$ depending only on $\varepsilon,\,\delta$ such that, for all $a> 0$,
\[ \E \left| \sum_{j=a+1}^{a+n} \eta_j\right|^{2 + \alpha} \leq C'_\eta n^{1 + \alpha/2}.\]
Here the constant $C_\eta ' > 0$ depends only on $\varepsilon, \, \delta,$ and $C_\eta$.
\end{thm}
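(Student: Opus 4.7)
The plan is to establish this via a Bernstein block decomposition combined with Rosenthal's inequality, closing the constants by induction on $n$, following the classical strategy of Yokoyama.

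As a preliminary variance bound, I would apply Davydov's inequality (Lemma~\ref{lem:orthogcontrol}) with $p=q=2+\delta$ to obtain $|\Cov(\eta_i,\eta_j)|\leq 12\,\alpha(|i-j|)^{\delta/(2+\delta)}$. The identity $(1+\varepsilon)(1+2/\delta)\cdot\delta/(2+\delta)=1+\varepsilon$ makes the resulting series summable under the hypothesized decay of $\alpha$, hence $\Var S_n \leq Cn$ where $S_n:=\sum_{j=a+1}^{a+n}\eta_j$. Next I would fix parameters $q\ll p\ll n$, partition $[a+1,a+n]$ into $N=\lfloor n/(p+q)\rfloor$ alternating big and small blocks of lengths $p$ and $q$, and decompose $S_n = U+V+R$ where $U=\sum_k B_k$ is the sum over big blocks, $V$ the sum over small blocks, and $R$ a boundary remainder. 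Minkowski together with the variance bound applied to a block of length $q$ gives $\E|V|^{2+\alpha}\leq (CNq^{1/2})^{2+\alpha}$, which is $o(n^{1+\alpha/2})$ for a suitable polynomial choice of $q$; the boundary term $R$ is controlled the same way.

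To decouple the big blocks, I would apply Bradley's strong-mixing coupling lemma to produce independent copies $B_k^*$ of the $B_k$ with $\mathbb{P}(B_k\neq B_k^*)\lesssim\alpha(q)^{c}$ for an explicit $c=c(\delta)>0$. The $(2+\alpha)$-th moment coupling error is then controlled via H\"older with exponent $(2+\delta)/(2+\alpha)$ combined with the uniform $L^{2+\delta}$-bound on $B_k$, and is negligible for polynomially decreasing $q$. For the independent family, Rosenthal's inequality gives
\[
\E\Bigl|\sum_k B_k^*\Bigr|^{2+\alpha}\leq C_R\left(\sum_k \E|B_k|^{2+\alpha}+\left(\sum_k \Var B_k\right)^{1+\alpha/2}\right),
\]
and the variance bound applied to each block yields $\sum_k\Var B_k\leq Cn$, so the second Rosenthal term contributes at most $C^{1+\alpha/2}n^{1+\alpha/2}$, of the required order.

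The main obstacle is the first Rosenthal term, since $\E|B_k|^{2+\alpha}$ is precisely the quantity being asserted, at block size $p$. This is resolved by induction on $n$ (the statement is uniform in the shift $a$): the inductive hypothesis gives $\E|B_k|^{2+\alpha}\leq C'_\eta p^{1+\alpha/2}$, so this Rosenthal term is at most $C_R C'_\eta\, n\, p^{\alpha/2}$. Writing $p=\lfloor n^{\theta}\rfloor$ for some $\theta\in(0,1)$, this equals $C_R C'_\eta\, n^{-(1-\theta)\alpha/2}\cdot n^{1+\alpha/2}$, which is strictly smaller than $C'_\eta n^{1+\alpha/2}$ once $n$ exceeds a threshold depending on $\theta,\alpha,C_R$. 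Choosing $\alpha\in(0,\delta]$ small enough that all fixed error contributions (small-block, boundary, coupling, and variance) fit into the remaining slack, and choosing $C'_\eta$ large enough to absorb the finitely many base cases, closes the induction. The dependence of the final $\alpha$ and $C'_\eta$ on $\varepsilon,\delta,C_\eta$ enters only through the Davydov exponent $\delta/(2+\delta)$, the polynomial mixing rate, and the Rosenthal constant.
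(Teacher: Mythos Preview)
The paper does not prove this statement: it is quoted as a known result from Sotres--Ghosh \cite{sotmal}, in the formulation of Kuelbs--Philipp \cite{kuelphil}, and is used as a black box in the proof of Lemma~\ref{lem: alpha_CLT}. So there is no ``paper's own proof'' to compare against.

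Your sketch is a reasonable outline of the classical route to such moment inequalities (in the spirit of Yokoyama's argument). The variance bound via Davydov, the Bernstein blocking, the Rosenthal estimate on an independent surrogate, and the inductive closure on the first Rosenthal term are all standard ingredients, and your bookkeeping for the induction---in particular that the inductive term carries the small factor $C_R\,n^{-(1-\theta)\alpha/2}$ while the variance term contributes a fixed multiple of $n^{1+\alpha/2}$ independent of $C'_\eta$---is the right mechanism for choosing $C'_\eta$ large enough.

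One point deserves more care. Your decoupling step invokes ``Bradley's strong-mixing coupling lemma'' to replace the big blocks by independent copies with $\mathbb{P}(B_k\neq B_k^*)\lesssim \alpha(q)^{c}$. For $\alpha$-mixing (as opposed to $\beta$-mixing, where Berbee's lemma gives exactly this), the available coupling results are more delicate: Bradley's lemma controls $\mathbb{P}(|B_k-B_k^*|>\epsilon)$ in terms of $\alpha(q)$ \emph{and} a moment of $B_k$, with an $\epsilon$-dependent prefactor, rather than giving a clean bound on $\mathbb{P}(B_k\neq B_k^*)$. You can still push the argument through, but the error estimate has to be done with the explicit form of the coupling inequality, and the choice of $\alpha\in(0,\delta]$ then interacts with the exponents there as well as with the mixing rate. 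Alternatively, the original proofs avoid coupling entirely and control the discrepancy between $\mathbb{E}|{\sum_k B_k}|^{2+\alpha}$ and its independent analogue by iterated covariance (Davydov-type) bounds; that route is more self-contained given only the tools already in the paper.
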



\begin{lem}
\label{lem:covbd}
Let $\{Y_k^{(n)}\}_{k,n}$ be an array of mean zero random variables, and let $\delta > 0$ be such that:
\begin{itemize}
\item For each $n$, the sequence $(Y_k^{(n)})_k$ is stationary;
\item For each $n$, $\E \left| Y_1^{(n)}\right|^2 = 1$ and $\E \left| Y_1^{(n)} \right|^{2 +\delta} < \infty$;
\end{itemize}
Then for all $k$ and $n$,
\begin{equation}
\label{eq:growcovbd}
\left| \mathrm{Cov}(Y_1^{(n)},Y_k^{(n)}) \right| \leq 12 \|Y_1^{(n)}\|_{2 + \delta}^2\, \alpha_n(k-1)^{\delta / (2 + \delta)}. \end{equation}

\end{lem}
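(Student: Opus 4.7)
The statement is a direct manifestation of Davydov's covariance inequality (Lemma~\ref{lem:orthogcontrol}) in the stationary two-variable setting, so my plan is simply to reduce to that result. First I would verify that Davydov's inequality applies with the symmetric choice of exponents $p = q = 2 + \delta$. Since $\delta > 0$, we have $1/p + 1/q = 2/(2+\delta) < 1$, and the exponent on the mixing coefficient becomes
\[
1 - \frac{1}{p} - \frac{1}{q} = 1 - \frac{2}{2+\delta} = \frac{\delta}{2+\delta},
\]
which is exactly the exponent appearing in \eqref{eq:growcovbd}.

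Next I would match up the sigma-algebras. Setting $f = Y_1^{(n)}$ and $g = Y_k^{(n)}$, note that $f$ is measurable with respect to $\sigma(Y_1^{(n)})$ and $g$ with respect to $\sigma(Y_k^{(n)}) \subseteq \sigma(Y_j^{(n)} : j \geq k)$. By the definition of $\alpha_n$ associated to the row $(Y_j^{(n)})_j$, the strong mixing coefficient between these two sigma-algebras is bounded by $\alpha_n(k-1)$ (taking $k=1$ and $x = k-1$ in the array definition used in Lemma~\ref{lem: alpha_CLT}). Applying Davydov's inequality gives
\[
\bigl| \Cov(Y_1^{(n)}, Y_k^{(n)}) \bigr| \leq 12 \, \| Y_1^{(n)} \|_{2+\delta} \, \| Y_k^{(n)} \|_{2+\delta} \, \alpha_n(k-1)^{\delta/(2+\delta)}.
\]
Finally, stationarity of $(Y_j^{(n)})_j$ in the index $j$ gives $\| Y_k^{(n)} \|_{2+\delta} = \| Y_1^{(n)} \|_{2+\delta}$, and the claimed inequality follows.

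There is essentially no obstacle here beyond bookkeeping — the proof is a single application of the Davydov lemma combined with stationarity. The only point requiring care is the off-by-one indexing for the mixing coefficient (one must check that the gap between the index set $\{1\}$ and $\{k\}$ corresponds to $m = k-1$ in the definition of $\alpha_n$), but this is immediate from the formulation used earlier in the paper.
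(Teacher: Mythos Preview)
Your proposal is correct and follows exactly the paper's approach: both apply Davydov's inequality (Lemma~\ref{lem:orthogcontrol}) with $p=q=2+\delta$ to obtain the exponent $\delta/(2+\delta)$ on $\alpha_n(k-1)$, and then use stationarity to replace $\|Y_k^{(n)}\|_{2+\delta}$ by $\|Y_1^{(n)}\|_{2+\delta}$. If anything, your write-up is slightly more explicit about the sigma-algebra bookkeeping and the off-by-one check than the paper's two-line proof.
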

\begin{proof}
By Lemma~\ref{lem:orthogcontrol}, we have
\begin{align*}
\mathrm{Cov}(Y_1^{(n)},Y_k^{(n)}) = \mathbb{E}Y_1^{(n)} Y_k^{(n)} \leq 12 \|Y_1^{(n)}\|_{2 + \delta}^2\, \alpha_n(k-1)^{\delta / (2 + \delta)}.
\end{align*}
Using a similar argument on $- \E Y_1^{(n)} Y_k^{(n)}$ proves \eqref{eq:growcovbd}.
\end{proof}


\begin{proof}[Proof of Lemma~\ref{lem: alpha_CLT}]

We will argue via blocking, with large blocks of size $p(n)$ and small blocks of size $q(n)$. For each value of $r(n)$, let $p(n) = \lfloor r(n)^{1/2} \rfloor$  and choose $q(n) = \lfloor p(n)/\log r(n) \rfloor$. Define $k(n) = \lceil r(n) / (p(n) + q(n)) \rceil$.

We break the sum in \eqref{eq:cltsum} into blocks as follows. Define ``large blocks'' $\{V_i^{(n)}\}_{i=1}^{k(n)}$ and ``small blocks'' $\{W_i^{(n)}\}_{i=1}^{k(n)}$ inductively by letting
\begin{align*}
V_1^{(n)} = \sum_{j=1}^{p(n)} \eta_j^{(n)}; \quad W_1^{(n)} = \sum_{j=p(n)+1}^{p(n)+q(n)} \eta_j^{(n)}; \quad V_2^{(n)} = \sum_{j=p(n)+q(n)+1}^{2p(n) + q(n)} \eta_j^{(n)},
\end{align*}
and so on. Note that each $V_i^{(n)}$ is made up of $p(n)$ terms and each $W_i^{(n)}$ is made up of $q(n)$ terms. Now, notice
\[A(n):=\sum_{j=1}^{k(n)(p(n)+q(n))} \eta_j^{(n)} = \sum_{j=1}^{k(n)} V_j^{(n)} +  \sum_{j=1}^{k(n)} W_j^{(n)}.\]

\begin{clam}
$A(n)/\sqrt{\Var A(n)} \Rightarrow N(0,1)$ if and only if $\frac{1}{\sigma(n)}\sum_{j=1}^{r(n)} \eta_j^{(n)} \Rightarrow N(0,1)$.
\end{clam}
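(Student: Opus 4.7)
The plan is to reduce the claim to a routine Slutsky-type argument by showing that $A(n)$ and $S(n) := \sum_{j=1}^{r(n)} \eta_j^{(n)}$ differ by a sum of only $O(r(n)^{1/2})$ terms, whereas $\sigma(n)^2$ grows linearly in $r(n)$; both $\Var A(n)/\sigma(n)^2 \to 1$ and $(A(n) - S(n))/\sigma(n) \to 0$ in probability then follow easily.

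First I would observe, directly from $k(n) = \lceil r(n)/(p(n)+q(n))\rceil$, that the excess $k(n)(p(n)+q(n)) - r(n)$ lies in $[0,\, p(n)+q(n))$. Thus $D(n) := A(n) - S(n)$ is a sum of at most $p(n)+q(n)$ consecutive terms $\eta_j^{(n)}$, indexed by the contiguous block $\{r(n)+1, \ldots, k(n)(p(n)+q(n))\}$.

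Second, I would apply hypothesis~3 of Lemma~\ref{lem: alpha_CLT}, which bounds the variance of any such partial sum by $\CC_8$ times its cardinality, to get $\Var D(n) \leq \CC_8(p(n)+q(n))$. Hypothesis~2 gives $\sigma(n)^2 \geq \CC_7 r(n)$. Since $p(n) = \lfloor r(n)^{1/2}\rfloor$ and $q(n) \leq p(n)$, we have $(p(n)+q(n))/r(n) \to 0$, hence $\Var D(n)/\sigma(n)^2 \to 0$. Chebyshev's inequality then yields $D(n)/\sigma(n) \to 0$ in probability. Applying Cauchy--Schwarz to the cross term,
\[
\bigl| \Var A(n) - \sigma(n)^2 \bigr| \leq 2\sigma(n)\sqrt{\Var D(n)} + \Var D(n),
\]
so $\Var A(n)/\sigma(n)^2 \to 1$.

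Finally, I would conclude by writing
\[
\frac{S(n)}{\sigma(n)} \;=\; \frac{\sqrt{\Var A(n)}}{\sigma(n)} \cdot \frac{A(n)}{\sqrt{\Var A(n)}} \;-\; \frac{D(n)}{\sigma(n)},
\]
and invoking Slutsky's theorem: since the first prefactor tends to $1$ and the last term tends to $0$ in probability, $A(n)/\sqrt{\Var A(n)} \Rightarrow N(0,1)$ if and only if $S(n)/\sigma(n) \Rightarrow N(0,1)$. There is no genuine obstacle here; the only minor point requiring attention is recognizing that hypothesis~3 of the lemma is stated uniformly over all index sets of a given cardinality, so it applies verbatim to the contiguous tail block comprising $D(n)$.
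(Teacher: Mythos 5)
Your proposal is correct and follows essentially the same route as the paper: identify the difference $A(n)-\sum_{j=1}^{r(n)}\eta_j^{(n)}$ as a block of at most $p(n)+q(n)$ terms, bound its variance by the hypothesis giving $O(p(n))$ (the paper invokes stationarity plus the same variance bound, you apply hypothesis~3 directly to the tail block), deduce $\sqrt{\Var A(n)}/\sigma(n)\to 1$ and that the difference over $\sigma(n)$ vanishes in probability via $\sigma(n)^2 \geq \CC_7 r(n)$ and Chebyshev, and finish with Slutsky. The only differences are cosmetic (your explicit Cauchy--Schwarz comparison of variances is carried out implicitly in the paper).
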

\begin{proof}
Note that $A(n) - \sum_{j=1}^{r(n)}\eta_j^{(n)}$ has the same distribution as $\sum_{j=1}^{b} \eta_j^{(n)}$ for some $b \leq p(n) + q(n)$. In particular,
\[\E \left[ A(n) -  \sum_{j=1}^{r(n)} \eta_j^{(n)} \right]^2 \leq 2 \CC_{4} p(n). \]
In particular, $\sqrt{\Var A(n)} / \sigma(n) \rightarrow 1$ and $\left(A(n) - \sum_{j=1}^{r(n)} \eta_j^{(n)}\right) / \sigma(n) \rightarrow 0$ in probability. Slutsky's Theorem completes the proof.
\end{proof}
The above claim allows us to assume that $r(n)$ has the form $k(n)(p(n)+q(n))$ for some integer $k(n)$, where $p(n) \in [r(n)^{1/2}/2, 2 r(n)^{1/2}]$ and $q(n) \in [r(n)^{1/2}/4 \log r(n), 4 r(n)^{1/2} / \log r(n)]$.
We will henceforth treat this as an assumption along with the other hypotheses of the lemma. Indeed, in replacing our original $r(n)$ by $k(n)(p(n) + q(n)),$ we preserve the assumption on the growth
rate of $r(n)$.

\begin{clam}\label{clam:probcon}
$\frac{1}{\sigma(n)} \sum_{j=1}^{k(n)} W_j^{(n)} \rightarrow 0$ in probability as $n \rightarrow \infty$.
\end{clam}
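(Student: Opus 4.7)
The plan is to establish convergence in $L^2$, which is stronger than the required convergence in probability. Since each small block $W_j^{(n)}$ is made up of $q(n)$ consecutive terms of $(\eta_i^{(n)})$, the concatenation $\sum_{j=1}^{k(n)} W_j^{(n)}$ is exactly a sum $\sum_{i \in \mathcal{B}} \eta_i^{(n)}$ over an index set $\mathcal{B} \subset \{1, \ldots, r(n)\}$ of cardinality $k(n) q(n)$. Hypothesis (3) in Lemma~\ref{lem: alpha_CLT} gives a uniform $O(b)$ bound on $\mathbb{E}|\sum_{i \in \mathcal{B}} \eta_i^{(n)}|^2$ valid for \emph{any} index set of size $b$, so
\[
\mathbb{E}\left( \sum_{j=1}^{k(n)} W_j^{(n)} \right)^2 \leq \CC_8\, k(n) q(n).
\]

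For the denominator, hypothesis (2) of the lemma applied with $b = r(n)$ gives the lower bound $\sigma(n)^2 \geq \CC_7 r(n) = \CC_7 k(n)(p(n) + q(n))$. Combining these two bounds,
\[
\mathbb{E}\left( \frac{1}{\sigma(n)} \sum_{j=1}^{k(n)} W_j^{(n)} \right)^2 \leq \frac{\CC_8}{\CC_7} \cdot \frac{q(n)}{p(n) + q(n)} \leq \frac{\CC_8}{\CC_7} \cdot \frac{q(n)}{p(n)}.
\]
By the assumed ranges for $p(n)$ and $q(n)$ (namely $p(n) \geq r(n)^{1/2}/2$ and $q(n) \leq 4 r(n)^{1/2}/\log r(n)$), the ratio $q(n)/p(n)$ is $O(1/\log r(n))$, which tends to $0$ as $n \to \infty$.

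This yields $L^2$-convergence of $\sigma(n)^{-1} \sum_{j=1}^{k(n)} W_j^{(n)}$ to $0$, and Chebyshev's inequality upgrades this immediately to convergence in probability. There is no real obstacle here: the argument is essentially the standard observation that the small blocks have total size $k(n) q(n) = o(r(n))$, and hypothesis (3) is precisely calibrated so that the variance scales linearly in the number of terms regardless of how these terms are arranged along the stationary sequence. The only subtle point worth flagging is that hypothesis (3), rather than merely hypothesis (1) of Condition I (which controls consecutive sums), is exactly what allows us to sum the variances of the non-adjacent small blocks without paying a penalty proportional to the number of blocks.
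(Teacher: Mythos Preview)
Your proof is correct and is essentially the same as the paper's: both use hypothesis~(3) to bound $\mathbb{E}\bigl(\sum_j W_j^{(n)}\bigr)^2$ by a constant times $k(n)q(n) = o(r(n))$, bound $\sigma(n)^2$ below by a constant times $r(n)$ via hypothesis~(2), and conclude by Chebyshev. You have simply filled in the arithmetic that the paper leaves implicit.
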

\begin{proof}[Proof of Claim \ref{clam:probcon}]
By the third assumption in the statement of the lemma, 
\[ \mathbb{E}\left[ \sum_{j=1}^{k(n)} W_j^{(n)} \right]^2 = o(r(n)). \]
However, by assumption, $\sigma(n)^2 \geq \CC_5 r(n)$. Chebyshev's inequality completes the proof.
\end{proof}

By Claim \ref{clam:probcon} combined with Slutsky's Theorem, it suffices to show that
\[\frac{1}{\sigma(n)} \sum_{j=1}^{k(n)} V_j^{(n)} \Rightarrow N(0,1). \]
To this end, let $\{U_j^{(n)},\, j=1,\ldots,\,k(n),\, n = 1,\,2,\ldots\}$ be an array of independent random variables such that $U_j^{(n)}$ and $V_j^{(n)}$ are identically distributed for all $j$ and $n$.

\begin{clam}
\label{clam:indrep}
\[\frac{1}{\sigma(n)} \sum_{j=1}^{k(n)} V_j^{(n)} \Rightarrow N(0,1) \text{ if and only if }  \frac{1}{\sigma(n)} \sum_{j=1}^{k(n)} U_j^{(n)} \Rightarrow N(0,1).\]
\end{clam}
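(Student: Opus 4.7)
The claim has exactly the form of the equivalence in Theorem \ref{thm:baseclt}, applied to the arrays $X_j^{(n)} := V_j^{(n)}/\sigma(n)$ and $Y_j^{(n)} := U_j^{(n)}/\sigma(n)$. Conditions 1--4 of that theorem are immediate: $k(n)$ is a positive integer, the $V_j^{(n)}$ are real-valued (square-integrable since $\Var V_j^{(n)} \leq \CC_8 p(n)$ by hypothesis 3 of Lemma \ref{lem: alpha_CLT}), the $U_j^{(n)}$ are independent by construction, and the two sequences match in distribution coordinate-by-coordinate. So the plan is simply to verify the mixing hypothesis
\[
\lim_n \sum_{k=1}^{k(n)-1} \alpha\left( \sigma\left(X_j^{(n)}: 1 \leq j \leq k\right),\, \sigma\left(X_{k+1}^{(n)}\right)\right) = 0.
\]

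The main step is to relate the mixing coefficient of consecutive blocks back to $\alpha_n$. The large block $V_{k+1}^{(n)}$ is a measurable function of $\eta_j^{(n)}$ for $j$ ranging over a window of length $p(n)$ starting at index $k(p(n)+q(n))+1$, while $V_1^{(n)},\ldots,V_k^{(n)}$ are measurable with respect to $\eta_j^{(n)}$ for $j \leq k(p(n)+q(n)) - q(n)$. Since these index ranges are separated by exactly $q(n)$, the small-block gap, the definition of $\alpha_n$ (inherited from the stationary sequence $\eta_j^{(n)}$) gives
\[
\alpha\left( \sigma\left(V_j^{(n)}: 1 \leq j \leq k\right),\, \sigma\left(V_{k+1}^{(n)}\right)\right) \leq \alpha_n(q(n)),
\]
and scaling by $\sigma(n)$ does not change the $\alpha$-coefficient. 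Hence the sum above is bounded by $k(n)\, \alpha_n(q(n))$.

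Now I exploit the polynomial mixing rate: by assumption 5 of Lemma \ref{lem: alpha_CLT}, for any $\kappa>0$ there exists $\CC_9$ with $\alpha_n(q(n)) \leq \CC_9\, q(n)^{-\kappa}$, uniformly in $n$. Using the ranges $p(n) \in [r(n)^{1/2}/2, 2r(n)^{1/2}]$ and $q(n) \in [r(n)^{1/2}/(4 \log r(n)), 4 r(n)^{1/2}/\log r(n)]$, together with $k(n) \leq r(n)/p(n) \leq 2 r(n)^{1/2}$, I get
\[
k(n)\, \alpha_n(q(n)) \leq 2 r(n)^{1/2} \cdot \CC_9 \left(\frac{r(n)^{1/2}}{4 \log r(n)}\right)^{-\kappa}
= O\!\left( (\log r(n))^{\kappa}\, r(n)^{(1-\kappa)/2}\right),
\]
which tends to $0$ as $n \to \infty$ for any choice $\kappa > 1$ (say $\kappa = 2$). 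This verifies the hypothesis of Theorem \ref{thm:baseclt}, which immediately yields the claimed equivalence of weak limits.

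The only conceptual obstacle is the first step---recognizing that the block-level mixing coefficient is controlled by the element-level mixing coefficient evaluated at the gap size $q(n)$---and this is a standard consequence of the fact that each block is a measurable functional of the $\eta$'s in its index window. Everything else is a bookkeeping verification of the asymptotics of $p(n), q(n), k(n)$.
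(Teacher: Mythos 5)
Your proof is correct and follows essentially the same approach as the paper: apply Theorem~\ref{thm:baseclt}, observe that the mixing hypothesis reduces to $k(n)\,\alpha_n(q(n)) \to 0$ (since consecutive large blocks are separated by a small block of length $q(n)$), and then verify this using the uniform polynomial bound $\alpha_n(x) \lesssim x^{-\kappa}$ with $\kappa>1$ together with the asymptotics $k(n) = O(r(n)^{1/2})$ and $q(n) \gtrsim r(n)^{1/2}/\log r(n)$. The paper states the reduction to $k(n)\alpha_n(q(n)) \to 0$ without the intermediate bookkeeping and checks the asymptotics via $k(n) < r(n)^{1/2+\varepsilon}$, $q(n) > r(n)^{1/2-\varepsilon}$; your version makes the block-to-element mixing comparison explicit and handles the logarithm directly, but the substance is identical.
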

\begin{proof}[Proof of Claim \ref{clam:indrep}]
We apply Theorem \ref{thm:baseclt}. The hypothesis on mixing becomes
\[k(n) \alpha_n(q(n))  \rightarrow 0 \text{ as } n \rightarrow \infty.\]
For each $\varepsilon > 0$, we have $k(n) < r(n)^{1/2+\varepsilon}$ and $q(n) > r(n)^{1/2-\varepsilon}$ for all $n$ sufficiently large. Since $\alpha_n(x)$ is bounded above by 
a constant multiple of $x^{-\kappa}$ uniformly in $n$, where $\kappa > 1,$ the hypothesis holds.
\end{proof}

It remains to show that the desired limiting behavior holds for the $U$-variables. To this end, defining $\sigma_U(n)^2:=\mathbb{E} \left(\sum_{j=1}^{k(n)} U_j^{(n)}\right)^2 = \mathbb{E} \sum_{j=1}^{k(n)} \left(U_j^{(n)}\right)^2$, then $\sigma(n) / \sigma_U(n) \rightarrow 1$ (we will delay proving this fact until the end of the proof). So we must show
\begin{equation}
\label{eq:uclt}
\frac{1}{\sigma_U(n)} \sum_{j=1}^{k(n)} U_j^{(n)} \Rightarrow N(0,1)\ ,
\end{equation}
and we will apply the Lyapunov condition: for some $q>1$ we would like
\[
\frac{1}{\sigma_U(n)^{2q}} \sum_{j=1}^{k(n)}\mathbb{E}|U_j^{(n)}|^{2q} \to 0\ .
\]
First apply Theorem~\ref{thm:momineq} to find $q>1$ with $2q\leq 2+\delta$ such that
\begin{equation}\label{eq: q_bound}
\|U_1^{(n)}\|_{2q} \leq C' p(n)^{1/2} \|\eta_1^{(n)}\|_{2+\delta}
\end{equation}
for some $C'$. For this $q$, we use stationarity and the assumption $\mathbb{E}\left(U_1^{(n)}\right)^2 \geq \CC_{5}p(n)$ to reduce the Lyapunov condition to
\[
\frac{1}{k(n)^{q-1}} \|\eta_1^{(n)}\|_{2+\delta}^{2q} \to 0\ .
\]
By definition of $k(n)$, this is satisfied if $\|\eta_1^{(n)}\|_{2+\delta} = o(r(n)^\Delta)$, where $\Delta$ is chosen as $\frac{1}{4}(1-q^{-1})$.

Last, to show that $\sigma(n)/\sigma_U(n) \rightarrow 1$, use Lemma~\ref{lem:orthogcontrol} and \eqref{eq: q_bound}:
\begin{align*}
\left|\mathbb{E} \sum_{j=1}^{k(n)} \left(U_j^{(n)}\right)^2 - \E \left[\sum_{j=1}^{k(n)} V_j^{(n)} \right] ^2 \right| &= \left| \sum_{\stackrel{i,j=1}{i \neq j}}^{k(n)} \text{Cov }(V_j^{(n)},V_i^{(n)}) \right| \\
&\leq 24 k(n)  \|U_1^{(n)}\|_{2q}^2 \sum_{m=1}^{\infty}  
\alpha_n(m q(n) )^{(q-1)/q}\\
&\leq 24 C'^2 r(n) \|\eta_1^{(n)}\|_{2+\delta}^2 \sum_{m=1}^{\infty} \alpha_n(C''m r(n)^{1/2-\varepsilon})^{(q-1)/q}
\end{align*}
for every $\varepsilon > 0$. Using the assumption on $\alpha_n$, this converges to 0 as $n \to \infty$ and shows $|\sigma(n)^2 - \sigma_U(n)^2| \to 0$. Last,
\[
\left| \frac{\sigma(n)}{\sigma_U(n)} - 1 \right| = \frac{|\sigma(n)^2-\sigma_U(n)^2|}{\sigma_U(n)(\sigma(n)+\sigma_U(n))} \to 0\ .
\]
\end{proof}

\section{CLT under association condition II}\label{appendix: CG}
Since the proof of Lemma~\ref{lem: CG} is almost the same as in \cite{CG}, we just give an outline, indicating the one main change to adapt the argument to our setting. The proof is another ``blocking'' argument and we begin with several definitions.

Note first that by Jensen's inequality, $\|\eta_i^{(n)}\|_{2+\delta}\geq 1$ for all $i,n$, so that $r(n) \to \infty$ as $n \to \infty$. Let $\ell$ be a positive integer and make the following definitions: $S(n) = \sum_{i=1}^{r(n)} \eta_i^{(n)}$,
\begin{align*}
m&= \lfloor r(n)/\ell\rfloor\ , \\
Y_j^{(n)} &= \sum_{(j-1)\ell < i \leq j\ell} \eta_i^{(n)} \text{ for } 1 \leq j \leq m\ , \\
S(n,\ell) &= \sum_{1 \leq j \leq m} Y_j^{(n)},~~Z(n) = S(n) - S(n,\ell)\ , \\
\sigma(n,\ell)^2 &= \Var S(n,\ell),~~ s(n,\ell)^2 = \sum_{1 \leq j \leq m} \Var Y_j^{(n)}\ , \\
\sigma(n)^2 &= \Var S(n)\ , \\
\phi_n(t) &= \mathbb{E}e^{itS(n)},~~ \phi_{n,\ell}(t) = \mathbb{E}e^{itS(n,\ell)}\ , \\
\phi_{n,j}(t) &= \mathbb{E}e^{itY_j^{(n)}}\ .
\end{align*}
Note that $Y_j^{(n)}$ and $Z(n)$ depend upon the choice of $\ell$, and that $m=m(n) \to \infty$ as $n \to \infty$.

We state a preliminary lemma.
\begin{lem}\label{lem: 2.1}
\[
\limsup_n \frac{\sigma(n)^2}{s(n,\ell)^2} \leq 1 + \frac{2}{\ell} \sum_{r=1}^\ell \hat u(r)\ .
\]
\end{lem}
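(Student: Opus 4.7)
The plan is to pass from $\sigma(n)^{2}$ to $\Var S(n,\ell)$, incurring only a cost that is bounded as $n\to\infty$ for fixed $\ell$, and then directly bound the between-block covariances of the $Y_{j}^{(n)}$'s using the covariance-decay hypothesis. Throughout the argument the key quantitative inputs are: positive association, which makes every covariance non-negative and hence yields the lower bound $s(n,\ell)^{2}=m\Var Y_{1}^{(n)}\geq m\ell$; stationarity, which makes every $\Var Y_{j}^{(n)}$ equal; and the hypothesis $\sum_{j:|i-j|\geq r,\,1\leq j\leq r(n)}\Cov(\eta_{j}^{(n)},\eta_{i}^{(n)})\leq \hat u(r)$, which upper-bounds one-sided tail covariance sums by $\hat u(r)$ as well (all summands being non-negative).

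First I would split $S(n)=S(n,\ell)+Z(n)$ and expand
\[
\sigma(n)^{2}=\Var S(n,\ell)+2\Cov(S(n,\ell),Z(n))+\Var Z(n).
\]
Since $Z(n)$ is a sum of at most $\ell-1$ of the $\eta_{i}^{(n)}$'s, $\Var Z(n)\leq (\ell-1)\hat u(0)$. For the cross term, for each $k=m\ell+t$ with $1\leq t\leq r(n)-m\ell$, the covariance-decay hypothesis applied at the index $k$ with parameter $t$ controls $\sum_{i=1}^{m\ell}\Cov(\eta_{i}^{(n)},\eta_{k}^{(n)})\leq \hat u(t)$, so $\Cov(S(n,\ell),Z(n))\leq \sum_{t=1}^{\ell-1}\hat u(t)$. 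Both remainders are bounded uniformly in $n$ for $\ell$ fixed; dividing by $s(n,\ell)^{2}\geq m\ell\to\infty$ they contribute $o(1)$, so $\limsup_{n}\sigma(n)^{2}/s(n,\ell)^{2}=\limsup_{n}\Var S(n,\ell)/s(n,\ell)^{2}$.

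Second, I would bound the between-block piece
\[
\Var S(n,\ell)-s(n,\ell)^{2}=\sum_{j\neq j'}\Cov(Y_{j}^{(n)},Y_{j'}^{(n)}).
\]
For $i=(j-1)\ell+p$ with $1\leq p\leq \ell$, split $\{1,\dots,m\ell\}\setminus\{\text{block }j\}$ into $\{k\leq (j-1)\ell\}$ and $\{k\geq j\ell+1\}$. On the first set $|i-k|\geq p$ and on the second $|i-k|\geq \ell-p+1$; both are subsets of the set appearing in the covariance-decay assumption, so using non-negativity of covariances,
\[
\sum_{k\notin\text{block }j,\,1\leq k\leq m\ell}\Cov(\eta_{i}^{(n)},\eta_{k}^{(n)})\leq \hat u(p)+\hat u(\ell-p+1).
\]
Summing over $p=1,\dots,\ell$ inside block $j$ yields $2\sum_{r=1}^{\ell}\hat u(r)$ (the two partial sums reindex to the same $\sum_{r=1}^{\ell}\hat u(r)$), and summing over $j=1,\dots,m$ gives $\sum_{j\neq j'}\Cov(Y_{j}^{(n)},Y_{j'}^{(n)})\leq 2m\sum_{r=1}^{\ell}\hat u(r)$. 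Dividing by $s(n,\ell)^{2}\geq m\ell$ produces
\[
\frac{\Var S(n,\ell)}{s(n,\ell)^{2}}\leq 1+\frac{2}{\ell}\sum_{r=1}^{\ell}\hat u(r),
\]
which combined with the previous step gives the claimed bound.

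The only delicate step is the position-dependent bookkeeping in the second paragraph: one has to be careful to associate each $i$ with the right tail parameter ($p$ on the left, $\ell-p+1$ on the right) so that summing over $p$ gives exactly $\sum_{r=1}^{\ell}\hat u(r)$ on each side and hence the correct constant $2/\ell$ after averaging over blocks. Everything else is a bounded remainder (for the $Z(n)$ contributions) or a consequence of positive association giving $s(n,\ell)^{2}\geq m\ell$.
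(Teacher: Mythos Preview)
Your proof is correct and is essentially the argument that the paper cites from Cox--Grimmett \cite{CG} (the paper itself gives no details beyond ``use $c_1=1$''). The decomposition $S(n)=S(n,\ell)+Z(n)$, the observation that the $Z(n)$ contributions are $O(1)$ and hence negligible against $s(n,\ell)^{2}\geq m\ell$, and the position-dependent bookkeeping $\hat u(p)+\hat u(\ell-p+1)$ for the between-block covariances are exactly the ingredients of the Cox--Grimmett computation.
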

\begin{proof}
The proof is the same as that of \cite[Lemma~2.1]{CG}, using $c_1=1$.
\end{proof}

Now we estimate characteristic functions. Our main goal is to show that
\begin{equation}\label{eq: main_goal}
\limsup_n \left| \phi_n\left( \frac{t}{\sigma(n)} \right) - e^{-t^2/2} \right| = 0\ .
\end{equation}
It will follow from three statements:
\begin{equation}\label{eq: equation_1}
\limsup_n \left| \phi_n \left( \frac{t}{\sigma(n)} \right) - \phi_{n,\ell} \left( \frac{t}{s(n,\ell)} \right) \right| \leq  |t| \frac{2}{\ell} \sum_{r=1}^\ell \hat u(r)\ ,
\end{equation}
\begin{equation}\label{eq: equation_2}
\limsup_n \left| \phi_{n,\ell}\left( \frac{t}{s(n,\ell)} \right) - \prod_j \phi_{n,j} \left( \frac{t}{s(n,\ell)} \right) \right| \leq \frac{t^2}{\ell} \sum_{r=1}^\ell \hat u(r)\ ,
\end{equation}
\begin{equation}\label{eq: equation_3}
\limsup_n  \left| \prod_{1 \leq j \leq m} \phi_{n,j}\left( \frac{t}{s(n,\ell)} \right) - e^{-\frac{t^2}{2}} \right| = 0\ .
\end{equation}
Indeed, combining these with the triangle inequality and taking $\ell \to \infty$, the assumption $\hat u(r) \to 0$ implies \eqref{eq: main_goal}. The proofs of \eqref{eq: equation_1} and \eqref{eq: equation_2} are exactly the same as those of (2.1) and (2.2) of \cite{CG}. The main tool is Lemma~\ref{lem: 2.1} along with the estimate
\[
\left| \mathbb{E}e^{itX} - \mathbb{E}e^{itY} \right| \leq |t| \|X-Y\|_2\ ,
\]
which holds for any variables $X,Y$ with two moments. These ideas suffice for \eqref{eq: equation_1}; for \eqref{eq: equation_2} one also needs the following lemma from \cite{NW}:
\begin{lem}[Newman-Wright]
Suppose $X_1, \ldots, X_m$ are associated finite variance random variables with joint and marginal characteristic functions $\psi(r_1, \ldots, r_m)$ and $\psi_j(r)$; then
\[
\left| \psi(r_1, \ldots, r_m) - \prod_{j=1}^m \psi_j(r_j) \right| \leq \frac{1}{2} \sum \sum_{1 \leq j \neq k \leq m} |r_j| |r_k| \Cov(X_j,X_k)\ .
\]
\end{lem}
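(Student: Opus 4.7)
Here is my plan: reduce the $m$-variable inequality to a pairwise bound via a telescoping identity, and establish the pairwise bound by Hoeffding's covariance formula. In the pair case, for any associated $(X, Y)$ with two moments and any reals $s, t$, one has Hoeffding's classical identity
\[
\Cov(\phi(X), \psi(Y)) = \int_{\mathbb{R}}\int_{\mathbb{R}} H(x, y)\, \phi'(x)\psi'(y)\, \mathrm{d}x\, \mathrm{d}y, \qquad H(x, y) := \mathbb{P}(X > x, Y > y) - \mathbb{P}(X > x)\mathbb{P}(Y > y).
\]
Association makes $H \geq 0$ pointwise (the basic characterization via the orthant-survival comparison), and Fubini gives $\int\int H\, \mathrm{d}x\, \mathrm{d}y = \Cov(X, Y)$. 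Applied to $\phi(x) = e^{isx}$, $\psi(y) = e^{ity}$ (splitting into real and imaginary parts to deal with complex integrands) and using $|\phi'| \leq |s|$, $|\psi'| \leq |t|$, this yields the key inequality $|\Cov(e^{isX}, e^{itY})| \leq |st|\,\Cov(X, Y)$.

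For the $m$-variable bound, I would induct on $m$, the case $m = 1$ being trivial. For the inductive step, set $S_{m-1} := \sum_{j < m} r_j X_j$ and decompose
\[
\psi(r_1, \ldots, r_m) - \prod_{j=1}^m \psi_j(r_j) = \Cov\bigl(e^{iS_{m-1}}, e^{ir_m X_m}\bigr) + \psi_m(r_m)\Bigl[\psi(r_1, \ldots, r_{m-1}) - \prod_{j < m}\psi_j(r_j)\Bigr].
\]
The bracketed term is controlled by the inductive hypothesis (using $|\psi_m(r_m)| \leq 1$), contributing at most $\tfrac12\sum_{j \neq k;\, j, k < m}|r_j r_k|\Cov(X_j, X_k)$. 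It then suffices to show $|\Cov(e^{iS_{m-1}}, e^{ir_m X_m})| \leq \sum_{j < m}|r_j r_m|\Cov(X_j, X_m)$; combining with the inductive contribution yields the claimed bound (the factor $\tfrac12$ arises because the final sum is over \emph{ordered} pairs $j \neq k$).

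The main obstacle lies in this last covariance estimate: the pair $(S_{m-1}, X_m)$ need \emph{not} itself be associated, since $S_{m-1}$ can fail to be coordinatewise monotone when some of the $r_j$ are negative, so the two-variable Hoeffding bound from the first step does not apply directly. To circumvent this, I would expand $e^{iS_{m-1}} = \prod_{j < m} e^{ir_j X_j}$ by a second telescope,
\[
\prod_{j < m} e^{ir_j X_j} - \prod_{j < m}\psi_j(r_j) = \sum_{l < m}\prod_{j < l}e^{ir_j X_j}\cdot\bigl(e^{ir_l X_l} - \psi_l(r_l)\bigr)\prod_{l < j < m}\psi_j(r_j),
\]
multiply by $e^{ir_m X_m} - \psi_m(r_m)$ and take expectation, and bound each summand via the multivariate analogue of Hoeffding's identity. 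This multivariate identity expresses the covariance of smooth functions of $(X_1, \ldots, X_m)$ as a sum over coordinate pairs of an integral against a kernel that is nonnegative for associated tuples and that integrates to the pairwise covariance $\Cov(X_l, X_m)$; applied to the exponential factors, whose partial derivatives are bounded in modulus by $|r_l|$ and $|r_m|$, it produces the desired pairwise bound. Justifying the multivariate identity and nonnegativity of its kernel (which follows by iterating the one-variable Hoeffding identity and using that every pair $(X_j, X_k)$ inherits association from the full tuple) is the technical heart of the argument.
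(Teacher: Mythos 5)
The paper itself gives no proof of this lemma --- it cites Newman--Wright \cite{NW} directly --- so you are on your own here, and I will judge the argument on its merits.

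Your two-variable Hoeffding step and the first (inductive) telescoping identity are both correct and do mirror Newman's original argument: the decomposition
\[
\psi(r_1,\ldots,r_m) - \prod_j \psi_j(r_j) = \Cov\bigl(e^{iS_{m-1}}, e^{ir_m X_m}\bigr) + \psi_m(r_m)\Bigl[\psi(r_1,\ldots,r_{m-1}) - \prod_{j<m}\psi_j(r_j)\Bigr]
\]
is exactly what you want, and you have correctly reduced the proof to the single inequality $\bigl|\Cov(e^{iS_{m-1}}, e^{ir_m X_m})\bigr| \leq \sum_{j<m}|r_j||r_m|\Cov(X_j, X_m)$.

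The gap is in your second telescope. The $l$-th summand there is, up to the deterministic factor $\prod_{l<j<m}\psi_j(r_j)$ of modulus at most one,
\[
T_l := \E\Bigl[\Bigl(\textstyle\prod_{j<l}e^{ir_j X_j}\Bigr)\bigl(e^{ir_l X_l}-\psi_l(r_l)\bigr)\bigl(e^{ir_m X_m}-\psi_m(r_m)\bigr)\Bigr] = \Cov\Bigl(\textstyle\prod_{j\leq l}e^{ir_j X_j}-\psi_l(r_l)\prod_{j<l}e^{ir_j X_j},\ e^{ir_m X_m}\Bigr),
\]
and you want to bound $|T_l|$ by $|r_l||r_m|\Cov(X_l,X_m)$. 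But the random phase $\prod_{j<l}e^{ir_j X_j}$ depends on $X_1,\ldots,X_{l-1}$, so the left entry of the covariance has nonvanishing partial derivatives in \emph{every} coordinate $j \leq l$, not just in $x_l$. Whatever multivariate Hoeffding/Newman covariance inequality you invoke will therefore produce contributions $\sum_{j\leq l}|r_j||r_m|\Cov(X_j,X_m)$ (with an extra factor on the $j<l$ terms because the bound on $|\partial_j|$ involves the modulus of $e^{ir_l X_l}-\psi_l$, which can be as large as $2$). Summing over $l<m$ then yields terms such as $2(m-1-j)|r_j||r_m|\Cov(X_j,X_m)$, far in excess of what the target $\sum_{j<m}|r_j||r_m|\Cov(X_j,X_m)$ allows. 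The telescope is not the right decomposition because it repeatedly exposes the same early coordinates.

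The repair is to skip the second telescope entirely and apply the multivariate covariance inequality for associated variables \emph{directly} to the pair $f(\vec x) = e^{i\sum_{j<m}r_j x_j}$ and $g(\vec x) = e^{ir_m x_m}$: since $|\partial_j f| = |r_j|$ for $j<m$, $|\partial_m g| = |r_m|$, and $g$ has no dependence on the other coordinates, the inequality $|\Cov(f,g)| \leq \sum_{j,k}\|\partial_j f\|_\infty\|\partial_k g\|_\infty\Cov(X_j,X_k)$ gives exactly what you need with no over-counting. That multivariate inequality is a genuine theorem (it is the content of what you call the ``technical heart,'' and the real-valued version has a clean proof via the classical observation that for $|\partial_j f|\leq a_j$ both $f^\pm(\vec x) := \sum_j a_j x_j \pm f(\vec x)$ are coordinatewise nondecreasing, so $\Cov(f^\pm,g^\pm)\geq 0$ by association). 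One residual point to attend to: extending that real inequality to complex $f,g$ with the same constant requires some care --- the naive real/imaginary decomposition costs a factor of two --- so you should either consult Newman's treatment of the complex case or verify the constant for the specific exponential functions at hand.
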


So we move to \eqref{eq: equation_3}, which is the main difference in our setting. As in \cite{CG}, we use Lyapunov's theorem: we must show that
\[
\frac{1}{s(n,\ell)^{2+\delta}} \sum_{1 \leq j \leq m} \mathbb{E}|Y_j^{(n)}|^{2+\delta} \to 0 \text{ as } n \to \infty\ .
\]
By positive association, $s(n,\ell)^2 \geq m\ell$. This and stationarity imply that it suffices to show
\[
\frac{m}{(m\ell)^{1+\delta/2}} \mathbb{E}\left| \sum_{i=1}^\ell \eta_i^{(n)}\right|^{2+\delta} \to 0\ .
\]
However, the triangle inequality gives an upper bound of
\[
\ell^{1+\delta/2} \cdot \frac{1}{m^{\delta/2}} \|\eta_1^{(n)}\|_{2+\delta}^{2+\delta} = \ell^{1+\delta/2} \left( \frac{1}{m^{\frac{1}{2}-\frac{1}{2+\delta}}} \|\eta_1^{(n)}\|_{2+\delta} \right)^{2+\delta}\ .
\]
As $\ell$ is fixed and $\|\eta_1^{(n)}\|_{2+\delta} = o\left(r(n)^{\frac{1}{2}-\frac{1}{2+\delta}}\right)$, we obtain \eqref{eq: equation_3} and complete the proof.


\bigskip
\noindent
{\bf Acknowledgements.} The authors thank Phil Sosoe for comments on a previous version of this manuscript. J. H. thanks R. Bradley for discussions related to CLT's for mixing arrays. M. D. thanks L. Zhu for pointing out CLT's for associated variables. The authors thank the organizers of the 2013 Midwest Probability Colloquium, where part of this work was completed.


\end{document}